\newtheorem{theorem}[subsection]{Theorem}
\newtheorem*{theorem*}{Theorem}
\newtheorem{lemma}[subsection]{Lemma}
\newtheorem{proposition}[subsection]{Proposition}
\newtheorem{corollary}[subsection]{Corollary}
\newtheorem*{conjecture*}{Conjecture}
\newtheorem*{question*}{Question}
\theoremstyle{remark}
\newtheorem{example}[subsection]{Example}
\theoremstyle{definition}
\newtheorem{definition}[subsection]{Definition}
\newtheorem*{notation*}{Notation}
\newcommand{\opname}[1]{\operatorname{\mathsf{#1}}}
\renewcommand{\mod}{\opname{mod}\nolimits}
\newcommand{\proj}{\opname{proj}\nolimits}
\newcommand{\inj}{\opname{inj}\nolimits}
\newcommand{\add}{\opname{add}\nolimits}
\newcommand{\Hom}{\opname{Hom}}
\newcommand{\End}{\opname{End}}
\newcommand{\Aut}{\opname{Aut}}
\newcommand{\Ext}{\opname{Ext}}
\newcommand{\silt}{\opname{silt}}
\newcommand{\tilt}{\opname{tilt}}
\newcommand{\tors}{\opname{tors}}
\numberwithin{equation}{section}
\begin{document}

\title[]{$G$-stable support $\tau$-tilting modules}

\author{Yingying Zhang and Zhaoyong Huang}

\thanks{2010 Mathematics Subject Classification: 16G10}
\thanks{Keywords: $G$-stable support $\tau$-tilting modules, $G$-stable two-term silting complexes,
$G$-stable functorially finite torsion classes, $G$-stable cluster-tilting objects, Bijection, Skew group algebras.}
\address{Department of Mathematics, Nanjing University, Nanjing 210093, Jiangsu Province, P.R. China}

\email{zhangying1221@sina.cn}
\address{Department of Mathematics, Nanjing University, Nanjing 210093, Jiangsu Province, P.R. China}

\email{huangzy@nju.edu.cn}

\begin{abstract}
%The class of (support) $\tau$-tilting modules was introduced by Adachi, Iyama and Reiten. It completes the class of
%tilting modules from the point of view of mutations.
%whose order is invertible in $\Lambda$,
Motivated by $\tau$-tilting theory developed by Adachi, Iyama and Reiten, for a finite-dimensional algebra $\Lambda$ with action by
a finite group $G$, we introduce the notion of $G$-stable support $\tau$-tilting modules.
Then we establish bijections among $G$-stable support $\tau$-tilting modules over $\Lambda$, $G$-stable two-term
silting complexes in the homotopy category of bounded complexes of finitely generated projective $\Lambda$-modules,
and $G$-stable functorially finite torsion classes in the category of finitely generated left $\Lambda$-modules.
In the case when $\Lambda$ is the endomorphism of a $G$-stable cluster-tilting object $T$ over a $\Hom$-finite
2-Calabi-Yau triangulated category $\mathcal{C}$ with a $G$-action, these are also in bijection with $G$-stable
cluster-tilting objects in $\mathcal{C}$. Moreover, we investigate the relationship between stable support $\tau$-tilitng modules
over $\Lambda$ and the skew group algebra $\Lambda G$.
\end{abstract}

\maketitle

\section{Introduction}\label{s:introduction}
\medskip
It is well known that tilting theory is a theoretical basis in the representation theory of finite-dimensional algebras,
in which the notion of tilting modules is fundamental. Moreover, in the representation theory of algebras the notion of ``mutation"
often plays an important role. Mutation is an operation for a certain class of objects in a fixed category to construct a new object
from a given one by replacing a summand. In [HU2], Happel and Unger gave some necessary and sufficient conditions under which mutation
of tilting modules is possible; however, mutation of tilting modules is not always possible. In [AIR], Adachi, Iyama and Reiten
introduced the notion of support $\tau$-tilting modules which generalizes that of tilting modules, and showed that
mutation of support $\tau$-tilting modules is always possible. This is a big advantage of ``support $\tau$-tilting mutation"
which ``tilting mutation" does not have.
Note that the $\tau$-tilting theory developed in [AIR] has stimulated several investigations;
in particular, there is a close relation between support $\tau$-tilting modules and some other
important notions in the representation theory of algebras, such as torsion classes, silting complexes, cluster-tilting objects,
Grothendieck groups and $*$-modules, see [AIR], [AiI], [BDP], [IJY], [IR], [J], [W], and so on.
Moreover, Adachi gave in [A] a classification of $\tau$-tilting modules over Nakayama algebras
and an algorithm to construct the exchange quiver of support $\tau$-tilting modules.
Zhang studied in [Z] $\tau$-rigid modules which are direct summands of support $\tau$-tilting modules
over algebras with radical square zero.

On the other hand, the notion of skew group algebras was introduced in [RR]. Let $\Lambda$ be a finite-dimensional algebra and $G$ a
finite group such that its order $|G|$ is invertible in $\Lambda$ acting on $\Lambda$. The algebra $\Lambda$ and the skew group algebra
$\Lambda G$ have a lot of properties in common.

The aim of this paper is to introduce and study $G$-stable support $\tau$-tilting modules and moreover to establish bijections among them
and $G$-stable two-term silting complexes, $G$-stable functorially finite torsion classes, and $G$-stable cluster-tilting objects. Moreover, we investigate the relationship between stable support $\tau$-tilting modules over $\Lambda$ and the skew group algebra $\Lambda G$.
This paper is organized as follows.

In Section 2, we give some terminology and some known results.

In Section 3, we prove the following theorem.

\begin{theorem}
{\rm ([Theorems 3.4, 3.7, 3.13 and 3.15])}\label{t:integer2}
Let $\Lambda$ be a finite-dimensional algebra and $G$ a finite group acting on $\Lambda$. Then there exist bijections among
\begin{itemize}
\item[(1)] the set $G$-$s\tau$-$\tilt\Lambda$ of isomorphism classes of basic $G$-stable support $\tau$-tilting modules in $\mod\Lambda$;
\item[(2)] the set $G$-$2$-$\silt\Lambda$ of isomorphism classes of basic $G$-stable two-term silting complexes in $\opname{K}^{b}(\proj \Lambda)$;
\item[(3)] the set $G$-$f$-$\tors\Lambda$ of $G$-stable functorially finite torsion classes in $\mod\Lambda$.
\end{itemize}

Furthermore, if $\Lambda=\opname{End}_{\mathcal{C}}(T)$ where $\mathcal{C}$ is a $\Hom$-finite 2-Calabi-Yau triangulated category with
a $G$-action and $T$ is a $G$-stable cluster-tilting object, then there exists also a bijection between the following set and
any one of the above sets.
\begin{itemize}
\item[(4)] the set $G$-$c$-$\tilt\mathcal{C}$ of isomorphism classes of basic $G$-stable cluster-tilting objects in $\mathcal{C}$.
\end{itemize}
\end{theorem}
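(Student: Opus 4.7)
\medskip

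\noindent\textbf{Proof proposal.} The plan is to view the theorem as an equivariant enhancement of the Adachi--Iyama--Reiten bijections: each of their non-equivariant constructions is canonical and functorial, so the induced $G$-action commutes with them and they automatically restrict to $G$-stable objects on each side. For $g\in G$, the algebra automorphism $g\colon\Lambda\to\Lambda$ induces an exact autoequivalence ${}^g(-)$ of $\mod\Lambda$ by twisting the $\Lambda$-module structure, and this lifts to $\opname{K}^b(\proj\Lambda)$ preserving projectives, two-term complexes, and the silting property. An object $X$ from any of the four classes is $G$-stable by definition if ${}^gX\cong X$ for every $g\in G$.

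I would then check $G$-equivariance of the two classical bijections on the algebra side. The map $\Phi_1\colon s\tau\text{-}\tilt\Lambda\to 2\text{-}\silt\Lambda$ sends a basic support $\tau$-tilting module $M$ with minimal projective presentation $P^{-1}\xrightarrow{f}P^0$ to $(P^{-1}\xrightarrow{f}P^0)\oplus(P\to 0)$, where $P$ is the maximal projective summand of $\Lambda$ with $\Hom_\Lambda(P,M)=0$. Since ${}^g(-)$ is exact and preserves projectives, it carries a minimal projective presentation of $M$ to one of ${}^gM$, giving ${}^g\Phi_1(M)\cong\Phi_1({}^gM)$. Likewise $\Phi_2\colon s\tau\text{-}\tilt\Lambda\to f\text{-}\tors\Lambda$, $M\mapsto\opname{Fac}\,M$, satisfies ${}^g(\opname{Fac}\,M)=\opname{Fac}({}^gM)$. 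The inverses---taking $H^0$ of a two-term silting complex, and extracting the Ext-projective generator of a torsion class---are equally canonical, so both bijections restrict to the $G$-stable subsets, yielding (1)$\leftrightarrow$(2)$\leftrightarrow$(3).

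For (4) in the 2-Calabi--Yau setting, I would apply the cluster-tilting bijection: $\Hom_\cc(T,-)\colon\cc\to\mod\Lambda$ induces a bijection between isomorphism classes of basic cluster-tilting objects of $\cc$ and basic support $\tau$-tilting $\Lambda$-modules. Because $T$ is $G$-stable, this functor intertwines the given $G$-action on $\cc$ with the induced $G$-action on $\mod\Lambda$ up to natural isomorphism, so a cluster-tilting object $T'$ is $G$-stable if and only if $\Hom_\cc(T,T')$ is. The main obstacle is a coherence issue: a choice of isomorphism $g^{\ast}T\cong T$ pins down the induced $G$-action on $\Lambda=\End_\cc(T)$ only up to an inner automorphism, so one must verify that $G$-stability on the module side is independent of this choice. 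Since we work with isomorphism classes throughout and inner automorphisms act trivially on isomorphism classes of modules, this ambiguity is harmless, and the bijection restricts to (4)$\leftrightarrow$(1).
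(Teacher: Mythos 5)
Your treatment of the bijections (1)$\leftrightarrow$(2)$\leftrightarrow$(3) matches the paper's proofs of Theorems 3.4 and 3.7: the paper checks ${}^{\sigma}\opname{Fac}T=\opname{Fac}{}^{\sigma}T$ together with the $G$-stability of the Ext-projective generator $P(\mathcal{T})$, and checks that ${}^{\sigma}(-)$ carries minimal projective presentations to minimal projective presentations and commutes with $H^{0}$ --- exactly your ``equivariance of the canonical constructions'' argument.

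The one place where you are too quick is the claim in part (4) that ``a cluster-tilting object $T'$ is $G$-stable if and only if $\Hom_{\mathcal{C}}(T,T')$ is.'' The functor $\Hom_{\mathcal{C}}(T,-)$ annihilates $\add T[1]$, so if $X=X'\oplus X''$ with $X''$ the maximal direct summand of $X$ lying in $\add T[1]$, then $\Hom_{\mathcal{C}}(T,X)\cong\overline{X'}$ sees only $X'$. The intertwining of the $G$-actions therefore gives you directly only that $X'$ is $G$-stable iff $\overline{X'}$ is; you still have to relate $G$-stability of $X$ to that of $X'$, i.e.\ show that ${}^{\sigma}X\cong X$ forces ${}^{\sigma}X'\cong X'$, and conversely that ${}^{\sigma}X'\cong X'$ forces ${}^{\sigma}X''\cong X''$. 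The paper devotes Lemmas 3.10 and 3.11 and Proposition 3.12 to exactly this point: $\add T[1]$ is preserved by the action, ${}^{\sigma}X$ is again cluster-tilting, and the maximal $\add T[1]$-summand of a basic cluster-tilting object is determined up to isomorphism by its complement (since $X'\oplus X''$ and $X'\oplus{}^{\sigma}X''$ are both basic cluster-tilting, their $\add T[1]$-parts must agree). This is not difficult, but it is a genuine missing step in your writeup; without it the ``if'' direction of your displayed equivalence is unjustified. Your remark about the cocycle/inner-automorphism ambiguity is fine --- the paper sidesteps it by fixing isomorphisms $\varphi_{\sigma}\colon{}^{\sigma}T\to T$ with $\varphi_{\sigma\eta}=\varphi_{\sigma}\varphi_{\eta}$ --- and once (4)$\leftrightarrow$(1) is in place, the bijection with (2) follows as in the paper's Theorem 3.15.
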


Let $\Lambda$ be a finite-dimensional self-injective algebra and $G=\langle\nu\rangle$ the subgroup of
the automorphism group of $\Lambda$ generated by the Nakayama automorphism.
Then $\nu$-stable support $\tau$-tilting modules introduced by Mizuno in [M] are exactly $G$-stable support $\tau$-tilting modules in our sense.
So Theorem 1.1 is a generalization of [M, Theorem 1.1].

In Section 4, we investigate the relationship between $G$-stable support $\tau$-tilting $\Lambda$-modules
and $\mathbb{X}$-stable support $\tau$-tilting $\Lambda G$-modules,
where $\mathbb{X}$, the group of characters of $G$, naturally acts on $\Lambda G$. We have the following

\begin{theorem}
{\rm ([Theorems 4.2(3) and 4.6])}\label{t:integer2}
Let $\Lambda$ be a finite-dimensional algebra and $G$ a finite group acting on $\Lambda$ such that $|G|$ is invertible in $\Lambda$.
Then the functor $\Lambda G\otimes_{\Lambda}-: \mod\Lambda \rightarrow \mod\Lambda G$ preserves stability and induces the following injection:
\begin{center}
$G$-$s\tau$-$\tilt\Lambda \rightarrow \mathbb{X}$-$s\tau$-$\tilt\Lambda G$ via $T \mapsto \Lambda G\otimes_{\Lambda}T$.
\end{center}
Moreover, if $G$ is solvable, then this map is a bijection.
\end{theorem}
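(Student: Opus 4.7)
The plan is to verify three things in sequence: (i) the induction functor $F=\Lambda G\otimes_{\Lambda}-$ sends $G$-stable support $\tau$-tilting $\Lambda$-modules to $\mathbb{X}$-stable support $\tau$-tilting $\Lambda G$-modules; (ii) the induced map on isomorphism classes is injective for every $G$; and (iii) this map is surjective when $G$ is solvable.

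For (i), I would first note that $\Lambda G$ is free of rank $|G|$ as a right $\Lambda$-module, so $F$ is exact and preserves finitely generated projectives. The canonical decomposition $F(T)=\bigoplus_{g\in G}g\otimes T$ equips $F(T)$ with a $G$-grading, and the corresponding $\mathbb{X}$-action $\chi\cdot(g\otimes t)=\chi(g)(g\otimes t)$ realizes $F(T)$ as an $\mathbb{X}$-equivariant (in particular $\mathbb{X}$-stable) $\Lambda G$-module. To see that $F(T)$ is support $\tau$-tilting when $T$ is, I would show that $F$ sends a minimal projective presentation of $T$ to one of $F(T)$, which yields $\tau_{\Lambda G}F(T)\cong F(\tau_{\Lambda}T)$, and then combine the induction/restriction adjunction with the hypothesis $\Hom_{\Lambda}(T,\tau_{\Lambda}T)=0$ to transfer $\tau$-rigidity. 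The support-projectivity condition is immediate from $F(\Lambda)=\Lambda G$ and from compatibility between idempotents of $\Lambda$ and of $\Lambda G$.

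For (ii), I would restrict scalars back to $\Lambda$: there is a natural isomorphism $F(T)\big|_{\Lambda}\cong\bigoplus_{g\in G}{}^{g}T$, where ${}^{g}T$ denotes $T$ twisted by $g\in G$. Since $T$ and $T'$ are $G$-stable, every summand is isomorphic to $T$ (respectively $T'$), so any isomorphism $F(T)\cong F(T')$ in $\mod\Lambda G$ produces an isomorphism $T^{|G|}\cong (T')^{|G|}$ in $\mod\Lambda$, whence $T\cong T'$ by Krull--Schmidt.

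For (iii), I would induct on $|G|$ along a composition series with cyclic quotients. The base case is $G$ cyclic, where Cohen--Montgomery duality applies: $(\Lambda G)\mathbb{X}$ is Morita equivalent to $\Lambda$ and $\mathbb{X}^{*}\cong G$. Applying (i)--(ii) to $\Lambda G$ under the $\mathbb{X}$-action produces, in the reverse direction, an injection $\mathbb{X}$-$s\tau$-$\tilt\Lambda G\to G$-$s\tau$-$\tilt\Lambda$; its composition with $F$, viewed through the Morita equivalence, is the identity on $G$-$s\tau$-$\tilt\Lambda$, so both maps are bijections. The inductive step factors $F$ through $\Lambda N\otimes_{\Lambda}-$ for a normal subgroup $N\lhd G$ with $G/N$ cyclic and applies the hypothesis to each factor. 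The main obstacle is precisely this surjectivity step: lifting an $\mathbb{X}$-stable $\Lambda G$-module to a $\Lambda$-module is not automatic, because $\mathbb{X}$-stability is strictly weaker than carrying a coherent $\mathbb{X}$-equivariant structure, and the technical core of the argument is tracking the $G$-stability faithfully through the Morita equivalence and the composition-series reduction -- this is what forces the solvability hypothesis.
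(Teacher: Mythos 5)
Your overall strategy for the injection (the induction functor $F=\Lambda G\otimes_\Lambda -$, the $G$-grading of $FT$ giving $\mathbb{X}$-stability, restriction of scalars plus Krull--Schmidt for injectivity, and $F\tau\cong\tau F$ for transferring $\tau$-rigidity) is the same as the paper's. There are, however, two genuine gaps.

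First, in step (i) you dismiss the passage from ``$FT$ is $\tau$-rigid'' to ``$FT$ is support $\tau$-tilting'' as immediate from $F(\Lambda)=\Lambda G$ and compatibility of idempotents. It is not: being support $\tau$-tilting is a maximality condition, $|FT|+|FP|=|\Lambda G|$, and $F$ does not preserve the number of indecomposable direct summands. In the paper's own example the indecomposable projective at the fixed vertex is sent to a direct sum of two indecomposables, while the two projectives at the swapped vertices are both sent to the same indecomposable; so no counting argument through idempotents is available. The paper instead invokes Jasso's criterion (Proposition 2.2 of the paper, i.e.\ [J, Proposition 2.14]): a $\tau$-rigid module $M$ is support $\tau$-tilting iff there is an exact sequence $\Lambda\to M'\to M''\to 0$ with $M',M''\in\add M$ whose first map is a left $\add M$-approximation. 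The bulk of the proof of Theorem 4.2(3) is the verification, via the adjunction $(F,H)$ and $HFT\cong T^{|G|}$, that $Ff$ remains a left $\add FT$-approximation of $\Lambda G$; some version of this is unavoidable.

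Second, your inductive step for solvable $G$ factors $F$ through $\Lambda N\otimes_\Lambda -$ for a normal subgroup $N\lhd G$ with $G/N$ cyclic and ``applies the hypothesis to each factor.'' But $\Lambda G$ is in general only a crossed product, not a skew group algebra, of $G/N$ over $\Lambda N$ (one needs the extension to split), so the second factor is not of the form to which steps (i)--(ii) apply. The paper runs the reduction on the other side: by Reiten--Riedtmann, $(\Lambda G)\mathbb{X}$ is Morita equivalent to $\Lambda G^{(1)}$ with $G^{(1)}=[G,G]$, so iterating the character-group construction along the derived series yields a chain of injections $G$-$s\tau$-$\tilt\Lambda\to\mathbb{X}$-$s\tau$-$\tilt\Lambda G\to\cdots\to G$-$s\tau$-$\tilt\Lambda$ whose composite is a bijection, forcing each factor to be one. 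Your round-trip idea for the cyclic/abelian base case is exactly this; the point you are missing is that the derived series, not a composition series, is the right ladder, because every rung is then a skew group algebra by an abelian character group and the same two lemmas apply verbatim at each stage.
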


Finally we give an example to illustrate this theorem.

\section{Preliminaries}

\vspace{0.2cm}

In this section, we give some terminology and some known results.

Let $k$ be an algebraically closed field and we denote by $D:=\Hom_{k}(-, k)$. By an algebra $\Lambda$,
we mean a finite-dimensional algebra over $k$. We denote by $\mod\Lambda$ the category of finitely
generated left $\Lambda$-modules, by $\proj \Lambda$ and $\inj \Lambda$ the subcategories of $\mod \Lambda$
consisting of projective modules and injective modules respectively,
and by $\tau$ the Auslander-Reiten translation of $\Lambda$. We denote by $\opname{K}^{b}(\opname{proj\Lambda})$
the homotopy category of bounded complexes of $\proj \Lambda$. For $X\in\mod\Lambda$,
we denote by $\add X$ the subcategory of $\mod\Lambda$ consisting of all direct summands of finite direct sums of copies of $X$,
and by $\opname{Fac}X$ the subcategory of $\mod \Lambda$ consisting of all factor modules of finite direct sums of copies of $X$.

\vspace{0.2cm}

{\bf 2.1. $\tau$-tilting theory}

\vspace{0.2cm}

First we recall the definition of support $\tau$-tilting modules from [AIR].

\vspace{0.2cm}

Let $(X,P)$ be a pair with $X \in \mod \Lambda$ and $P \in \proj \Lambda$.
\begin{itemize}
\item[(1)] We call $X$ in $\mod\Lambda$ {\it $\tau$-rigid} if  $\Hom_{\Lambda}(X, \tau X)=0$.
We call $(X,P)$ a {\it $\tau$-rigid pair} if $X$ is $\tau$-rigid and $\Hom_{\Lambda}(P, X)=0$.
\item[(2)] We call $X$ in $\mod\Lambda$ {\it $\tau$-tilting} (respectively, {\it almost complete $\tau$-tilting})
if $X$ is $\tau$-rigid and $|X|=|\Lambda|$ (respectively, $|X|=|\Lambda|-1$), where $|X|$ denotes the number
of non-isomorphic indecomposable direct summands of $X$.
\item[(3)] We call $X$ in $\mod\Lambda$ {\it support $\tau$-tilting} if there exists an idempotent $e$ of
$\Lambda$ such that $X$ is a $\tau$-tilting ($\Lambda/\langle e\rangle$)-module. We call $(X,P)$ a
{\it support $\tau$-tilting pair} (respectively, {\it almost complete support $\tau$-tilting pair})
if $(X,P)$ is $\tau$-rigid and $|X|+|P|=|\Lambda|$ (respectively, $|X|+|P|=|\Lambda|-1$).
\end{itemize}

We say that $(X,P)$ is {\it basic} if $X$ and $P$ are basic. Moreover, $X$ determines $P$ uniquely up to isomorphism.
We denote by $s$$\tau$-$\tilt\Lambda$ the set of isomorphism classes of basic support $\tau$-tilting $\Lambda$-modules.

%The following observation will be used frequently in this paper.

\begin {proposition} {\rm ([AIR, Proposition 2.3])}
Let $X \in \mod \Lambda$ and $P, Q \in \proj  \Lambda$, and let $e$ be an idempotent of $\Lambda$ such that $\add P=\add \Lambda e$.
\begin{itemize}
\item[(1)] $(X,P)$ is a $\tau$-rigid pair for $\Lambda$ if and only if $X$ is a $\tau$-rigid $(\Lambda/\langle e\rangle)$-module.
\item[(2)] If both $(X,P)$ and $(X,Q)$ are support $\tau$-tilting pairs for $\Lambda$, then $\add P=\add Q$. In other words,
$X$ determines $P$ and e uniquely up to equivalence.
\end{itemize}
\end{proposition}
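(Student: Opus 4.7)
The plan is to handle (1) by a base-change argument reducing $\tau$-rigidity over $\Lambda$ to $\tau$-rigidity over $\bar{\Lambda}:=\Lambda/\langle e\rangle$, and then to deduce (2) from (1) together with the standard bound $|Y|+|R|\le|\Lambda|$ valid for every $\tau$-rigid pair $(Y,R)$.

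For (1), I would begin with the identification $\Hom_{\Lambda}(\Lambda e,X)\cong eX$, which shows that $\Hom_{\Lambda}(P,X)=0$ is equivalent to $eX=0$ and hence to $X\in\mod\bar{\Lambda}$; this disposes of the $\Hom$-vanishing clause. It remains to show that for such an $X$, $\tau_{\Lambda}$-rigidity and $\tau_{\bar{\Lambda}}$-rigidity agree. The key tool is the standard consequence of the Auslander-Reiten formula: for any projective presentation $P_{1}\xrightarrow{f}P_{0}\to X\to 0$ one has $\Hom(X,\tau X)\cong D\cok\bigl(\Hom(P_{0},X)\to\Hom(P_{1},X)\bigr)$, with the map induced by $f$, so $\tau$-rigidity is equivalent to surjectivity of this $\Hom$-map. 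I would then fix such a presentation over $\Lambda$ and apply the right-exact functor $\bar{\Lambda}\otimes_{\Lambda}-$ to produce a projective presentation of $X$ over $\bar{\Lambda}$ (using $\bar{\Lambda}\otimes_{\Lambda}X=X$). The tensor-hom adjunction identifies $\Hom_{\Lambda}(P_{i},X)$ with $\Hom_{\bar{\Lambda}}(\bar{\Lambda}\otimes_{\Lambda}P_{i},X)$ compatibly with the induced maps, so surjectivity transfers and the two rigidity conditions coincide.

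For (2), I would take the bound $|Y|+|R|\le|\Lambda|$ as an input. From $|X|+|P|=|\Lambda|=|X|+|Q|$ we immediately get $|P|=|Q|$. If some indecomposable summand $P_{i}$ of $P$ were not isomorphic to any summand of $Q$, then $(X,Q\oplus P_{i})$ would still be a basic $\tau$-rigid pair, since $\Hom_{\Lambda}(P_{i},X)=0$ already holds because $(X,P)$ is $\tau$-rigid, yet $|X|+|Q\oplus P_{i}|=|\Lambda|+1$ would violate the bound. Hence every indecomposable summand of $P$ occurs in $Q$, and by symmetry $\add P=\add Q$.

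The main obstacle is step (1): $\tau_{\Lambda}X$ and $\tau_{\bar{\Lambda}}X$ are built from projective resolutions over different rings, and the associated Nakayama functors do not match under restriction of scalars, so any direct module-level comparison of the two translates is awkward. The device of going through the projective-presentation $\Hom$-criterion and the $\bar{\Lambda}\otimes_{\Lambda}-\dashv\mathrm{res}$ adjunction neatly sidesteps this by comparing only surjectivity of $\Hom$-maps on a common diagram. A minor ancillary point is that this $\Hom$-criterion for $\tau$-rigidity must be seen to be independent of the chosen projective presentation, which is routine.
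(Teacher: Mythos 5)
The paper gives no proof of this proposition at all: it is quoted verbatim from [AIR, Proposition 2.3] as a preliminary, so I am measuring your argument against the standard one. Your treatment of the $\Hom$-vanishing clause via $\Hom_{\Lambda}(\Lambda e,X)\cong eX$ and your use of the adjunction $\Hom_{\Lambda/\langle e\rangle}(\bar{\Lambda}\otimes_{\Lambda}P_{i},X)\cong\Hom_{\Lambda}(P_{i},X)$ are exactly right, and the overall strategy for (1) is the standard one. The gap is precisely in the step you dismiss as routine: the surjectivity criterion for $\tau$-rigidity is \emph{not} independent of the chosen projective presentation. For a general presentation $R_{1}\xrightarrow{p}R_{0}\to X\to 0$ one has $\cok\bigl(\Hom_{\Lambda}(R_{0},Y)\to\Hom_{\Lambda}(R_{1},Y)\bigr)\cong D\Hom_{\Lambda}(Y,\tau X)\oplus\Hom_{\Lambda}(Q'',Y)$, where $Q''$ is the superfluous summand of $R_{1}$ mapping to zero; already $X=\Lambda$ with the presentation $\Lambda\xrightarrow{0}\Lambda\xrightarrow{\mathrm{id}}\Lambda\to 0$ shows the $\Hom$-map can fail to be surjective for a $\tau$-rigid module. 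Consequently your argument does prove ``$\tau_{\Lambda}$-rigid $\Rightarrow$ $\tau_{\bar{\Lambda}}$-rigid'' (surjectivity of the transferred map forces the vanishing in any case), but the converse requires knowing that the induced presentation $\bar{\Lambda}\otimes_{\Lambda}P_{1}\to\bar{\Lambda}\otimes_{\Lambda}P_{0}\to X\to 0$ is \emph{minimal} over $\bar{\Lambda}=\Lambda/\langle e\rangle$. That is true and is the actual content of (1): since $eX=0$, the top of $P_{0}$ is annihilated by $e$, so $\bar{P}_{0}\to X$ is again a projective cover; and the image of $\langle e\rangle P_{0}$ in the top of $K=\ker(P_{0}\to X)$ is exactly the sum of the simple summands $S$ with $eS\neq 0$, whence $\mathrm{top}(\bar{P}_{1})\cong\mathrm{top}(K/\langle e\rangle P_{0})$ and minimality follows. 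Without some such verification, part (1) is not established.

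For (2), your deduction is valid \emph{granted} the bound $|Y|+|R|\le|\Lambda|$ for every $\tau$-rigid pair, but you should be aware that in [AIR] this bound is a later and substantially deeper result (resting on Bongartz-type completion and the linear independence of the corresponding classes in $K_{0}(\proj\Lambda)$), and its development uses the present proposition, so invoking it here risks circularity. The self-contained route is shorter: by (1), $X$ is a $\tau$-tilting module over both $\Lambda/\langle e\rangle$ and $\Lambda/\langle f\rangle$ (where $\add Q=\add\Lambda f$), so $|P|=|\Lambda|-|X|=|Q|$; and if an indecomposable summand $P'$ of $P$ lay outside $\add\Lambda f$, then $P'/\langle f\rangle P'$ would be a nonzero projective $\Lambda/\langle f\rangle$-module, and sincerity of the $\tau$-tilting $\Lambda/\langle f\rangle$-module $X$ (Proposition 3.5(1)) would give $\Hom_{\Lambda}(P',X)\neq 0$, contradicting $\tau$-rigidity of the pair $(X,P)$.
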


Let $\mathcal{T}$ be a full subcategory of $\mod\Lambda$.
Assume that $T\in \mathcal{T}$ and $D\in\mod \Lambda$. The morphism $f: D\to T$ is called a
{\it left $\mathcal{T}$-approximation} of $D$ if
$$\Hom_{\Lambda}(T, T')\to \Hom_{\Lambda}(D, T')\to 0$$
is exact for any $T'\in\mathcal{T}$.
%The morphism $f: D\to T$ is called {\it left minimal} if an endomorphism $g: T\to T$ is an automorphism whenever $f=gf$.
The subcategory $\mathcal{T}$ is called {\it covariantly finite} in $\mod \Lambda$ if
every module in $\mod \Lambda$ has a left $\mathcal{T}$-approximation. The notions of {\it right
$\mathcal{T}$-approximations} and {\it contravariantly finite subcategories} of $\mod \Lambda$ are defined dually. The
subcategory $\mathcal{T}$ is called {\it functorially finite} in $\mod \Lambda$ if it is both covariantly finite and contravariantly
finite in $\mod \Lambda$ ([AR]).

Recall that $T \in \mod \Lambda$ is called {\it partial tilting} if the projective dimension of $T$ is at most one and
$\Ext^1_{\Lambda}(T, T)=0$. A partial tilting module is called {\it tilting} if there exists an exact sequence
$$0\rightarrow \Lambda\rightarrow T'\rightarrow T''\rightarrow 0$$ in $\mod \Lambda$ with $T'$, $T''\in\add T$ (see [HU1] and [B]).
We have that $|T|=|\Lambda|$ for any tilting module $T$ by [B, Theorem 2.1].
The following result gives a similar criterion for a $\tau$-rigid $\Lambda$-module to
be support $\tau$-tilting.

\begin{proposition} {\rm ([J, Proposition 2.14])} Let $M$ be a $\tau$-rigid $\Lambda$-module. Then $M$ is a support
$\tau$-tilting $\Lambda$-module if and only if there exists an exact sequence
$$\Lambda\buildrel {f}\over\rightarrow M'\buildrel {g} \over\rightarrow M''\rightarrow 0$$ in $\mod \Lambda$
with $M', M''\in \add M$ and $f$ a left $\add M$-approximation of $\Lambda$.
\end{proposition}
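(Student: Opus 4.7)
The plan is to adapt Bongartz's completion argument for tilting modules to the $\tau$-tilting setting, using the AIR characterization that a $\tau$-rigid module $M$ is support $\tau$-tilting if and only if $\add M$ coincides with the subcategory of Ext-projective objects of the torsion class $\opname{Fac} M$.

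For the forward direction, assume $M$ is support $\tau$-tilting. Since $\add M$ is functorially finite in $\mod\Lambda$, a left $\add M$-approximation $f\colon\Lambda\to M'$ exists, and setting $M'':=\cok f$ produces the required exact sequence with $M'\in\add M$. It remains to show $M''\in\add M$. As a quotient of $M'$, we have $M''\in\opname{Fac} M$; by the AIR characterization, it is enough to verify that $M''$ is Ext-projective in $\opname{Fac} M$. Applying $\Hom_\Lambda(-,N)$ for $N\in\opname{Fac} M$ yields
$$\Hom_\Lambda(M',N)\xrightarrow{f^{\ast}}\Hom_\Lambda(\Lambda,N)\longrightarrow\Ext^{1}_\Lambda(M'',N)\longrightarrow\Ext^{1}_\Lambda(M',N).$$
The last term vanishes: by Auslander--Reiten duality $\Ext^{1}_\Lambda(M',N)\cong D\overline{\Hom}_\Lambda(N,\tau M')$, and $\tau$-rigidity of $M$ forces $\Hom_\Lambda(N,\tau M)=0$ for every $N\in\opname{Fac} M$ (since any such $N$ is a quotient of some $M^{n}$ and $\Hom_\Lambda(M^{n},\tau M)=0$). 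Surjectivity of $f^{\ast}$ on an arbitrary $N\in\opname{Fac} M$ follows from the defining property of $f$ by lifting any $\phi\colon\Lambda\to N$ through a surjection $M^{n}\twoheadrightarrow N$ (using projectivity of $\Lambda$) to a map $\Lambda\to M^{n}\in\add M$ and then extending the latter across $f$. Hence $\Ext^{1}_\Lambda(M'',N)=0$ and $M''\in\add M$.

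For the converse, let $e\in\Lambda$ be the idempotent for which $\Lambda e$ is the sum of the indecomposable projective summands $Q$ of $\Lambda$ satisfying $\Hom_\Lambda(Q,M)=0$. Then $eM=0$, so $M$ is a module over $\bar\Lambda:=\Lambda/\langle e\rangle$, and $(M,\Lambda e)$ is a $\tau$-rigid pair. By Proposition 2.1 it suffices to show that $M$ is a $\tau$-tilting $\bar\Lambda$-module, equivalently that $|M|=|\bar\Lambda|$. The given sequence descends to $\bar\Lambda\xrightarrow{\bar f} M'\to M''\to 0$ in $\mod\bar\Lambda$, with $\bar f$ still a left $\add M$-approximation. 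Choosing $\bar f$ minimal, one aims to pair the indecomposable summands of $\bar\Lambda$ bijectively with those of $M$: an indecomposable summand of $\bar\Lambda$ contributes a distinct indecomposable summand to the minimal image of $\bar f$ in $M'\in\add M$, and the summands of $M'$ that are killed by $g$ correspond precisely to the summands of $M''\in\add M$, leaving an effective tally equal to $|M|$.

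The main obstacle is making this summand count rigorous, since $M'$ may contain repeated indecomposables and the pairing has to be compatible with the exactness of the sequence. A more robust alternative, which I would fall back on, is to translate the data into $\opname{K}^{b}(\proj\bar\Lambda)$: assembling the exact sequence with minimal projective presentations of $M'$ and $M''$ produces a two-term complex which is presilting by $\tau$-rigidity, and the existence of the exact sequence forces it to generate all of $\opname{K}^{b}(\proj\bar\Lambda)$, hence to be silting; the conclusion then follows from the AIR bijection between two-term silting complexes and support $\tau$-tilting modules, which is precisely the input the present paper builds on in Section 3.
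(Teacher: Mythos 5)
The paper does not prove this statement at all --- it is quoted verbatim from [J, Proposition~2.14] as a known result --- so there is no internal proof to compare against; your attempt has to stand on its own. Your forward direction does: taking a left $\add M$-approximation $f\colon\Lambda\to M'$, setting $M''=\cok f$, and showing $M''$ is Ext-projective in $\opname{Fac}M$ via the Auslander--Reiten formula and the approximation property, then invoking $\add M=\add P(\opname{Fac}M)$ from [AIR, Theorem~2.7], is a complete and correct argument. (One cosmetic point: the four-term sequence you write is really the long exact sequence attached to $0\to\im f\to M'\to M''\to 0$, with $\Hom_\Lambda(\im f,N)$ in place of $\Hom_\Lambda(\Lambda,N)$; since surjectivity of $f^{\ast}$ onto the larger space implies surjectivity onto $\Hom_\Lambda(\im f,N)$, the conclusion survives, but the sequence as literally written is not exact at $\Hom_\Lambda(\Lambda,N)$ when $f$ is not injective.)

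The converse direction, however, has a genuine gap, and it is the harder half of the proposition. Your summand-counting argument is, as you concede, not rigorous, and it cannot be repaired as stated: distinct indecomposable summands of $\bar\Lambda$ can map into the same indecomposable summand of $M'$, and the proposed ``effective tally'' pairing the summands killed by $g$ with those of $M''$ has no justification compatible with exactness. Your fallback via two-term silting complexes begs the question at exactly the crucial point: a two-term presilting complex is silting precisely when it generates $\opname{K}^{b}(\proj\bar\Lambda)$, equivalently (by [AIR, Proposition~3.3]) when it has $|\bar\Lambda|$ indecomposable summands --- which is the count $|M|=|\bar\Lambda|$ you are trying to establish. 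Asserting that ``the existence of the exact sequence forces it to generate'' is not a proof, and routing through the AIR bijection between two-term silting complexes and support $\tau$-tilting modules is circular, since the proof of that bijection uses this very characterization. The standard argument (in [J] and in the proof of [AIR, Theorem~2.10]) instead shows that the hypotheses force every Ext-projective object of the functorially finite torsion class $\opname{Fac}M$ to lie in $\add M$ --- using that $\opname{Fac}M=\opname{Fac}M'$ and that the kernel of a suitable right $\add M$-approximation of an Ext-projective again lies in $\opname{Fac}M$, so that Ext-projectivity splits the resulting short exact sequence --- whence $\add M=\add P(\opname{Fac}M)$ and $M$ is support $\tau$-tilting by [AIR, Theorem~2.7]. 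Some such argument is the missing ingredient; without it your proof establishes only one implication.
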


\vspace{0.2cm}

{\bf 2.2. Functorially finite torsion classes}

\vspace{0.2cm}

Let $\mathcal{T}$ be a full subcategory of $\mod\Lambda$. Recall that $\mathcal{T}$ is called a {\it torsion class} if it is closed under
factor modules and extensions. We denote by $f$-$\tors\Lambda$ the set of functorially finite torsion classes in $\mod\Lambda$.
We say that $X \in \mathcal{T}$ is $\Ext$-{\it projective} if $\Ext^{1}_{\Lambda}(X,\mathcal{T})$=0. We denote by $P(\mathcal{T}$)
the direct sum of one copy of each of the indecomposable $\Ext$-projective objects in $\mathcal{T}$ up to isomorphism. We have that
$P(\mathcal{T})\in\mod\Lambda$ if $\mathcal{T}\in f$-$\tors\Lambda$ ([AS, Corollary 4.4]).
The following result establishes a relation between $s\tau$-$\tilt\Lambda$ and $f$-$\tors\Lambda$.

\begin{theorem} {\rm ([AIR, Theorem 2.7])}
There exists a bijection:
\begin{center}
$s\tau$-$\tilt\Lambda \longleftrightarrow f$-$\tors\Lambda$
\end{center}
given by $s\tau$-$\tilt\Lambda \ni T \mapsto\opname{Fac}T \in f$-$\tors\Lambda$ and
$f$-$\tors\Lambda \ni \mathcal{T} \mapsto P(\mathcal{T}) \in s\tau$-$\tilt\Lambda$.
\end{theorem}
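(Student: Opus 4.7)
The plan is to verify that each of the two assignments is well-defined and that they are mutually inverse.

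First I would check that if $T$ is a basic support $\tau$-tilting module, then $\operatorname{Fac} T$ is a functorially finite torsion class. Closure under factor modules is immediate. For closure under extensions, I would use the fact that $\tau$-rigidity of $T$ is equivalent (via the Auslander--Reiten formula $D\operatorname{Hom}_\Lambda(X,\tau Y)\cong \overline{\operatorname{Ext}}^1_\Lambda(Y,X)$) to $\operatorname{Ext}^1_\Lambda(T,\operatorname{Fac} T)=0$; applying $\operatorname{Hom}_\Lambda(T,-)$ to any short exact sequence $0\to A\to B\to C\to 0$ with $A,C\in\operatorname{Fac} T$ then produces a surjection from a sum of copies of $T$ onto $B$, so $B\in\operatorname{Fac} T$. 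Functorial finiteness is easier: right $\operatorname{add} T$-approximations give right $\operatorname{Fac} T$-approximations directly, while left approximations are built by composing the left $\operatorname{add} T$-approximation of $\Lambda$ supplied by Proposition 2.3 with surjections from projectives.

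Next I would show $\mathcal{T}\mapsto P(\mathcal{T})$ lands in $s\tau$-$\tilt\Lambda$. The existence of $P(\mathcal{T})\in\mod\Lambda$ is the cited Auslander--Smal\o\ result, and $\tau$-rigidity follows once more from the Auslander--Reiten formula, since $\operatorname{Fac} P(\mathcal{T})\subseteq\mathcal{T}$ and $P(\mathcal{T})$ is $\operatorname{Ext}$-projective in $\mathcal{T}$. To upgrade $\tau$-rigidity to support $\tau$-tilting I would apply the criterion of Proposition 2.3: I take a minimal left $\operatorname{add} P(\mathcal{T})$-approximation $f\colon \Lambda\to M'$, show that its cokernel $M''$ again lies in $\mathcal{T}$ and is $\operatorname{Ext}$-projective there (using the long exact sequence obtained from $\operatorname{Hom}_\Lambda(-,N)$ for $N\in\mathcal{T}$ and the $\operatorname{Ext}$-projectivity of $M'$), and conclude $M''\in\operatorname{add} P(\mathcal{T})$.

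Finally I would verify that the two assignments are mutually inverse. The equality $\operatorname{Fac} P(\mathcal{T})=\mathcal{T}$ reduces to showing that every $X\in\mathcal{T}$ is a quotient of an object of $\operatorname{add} P(\mathcal{T})$; this is where I would lean on the Auslander--Smal\o\ construction, which in fact produces $P(\mathcal{T})$ as a generator of $\mathcal{T}$ by iterated right approximations of injectives. For $P(\operatorname{Fac} T)\cong T$, indecomposable summands of $T$ are $\operatorname{Ext}$-projective in $\operatorname{Fac} T$ by the argument of paragraph one, and the converse (every indecomposable $\operatorname{Ext}$-projective in $\operatorname{Fac} T$ is a summand of $T$) follows from a Wakamatsu-type argument on the approximation sequence of Proposition 2.3.

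The step I expect to be the main obstacle is controlling the numerical invariants so that $|P(\mathcal{T})|+|P|=|\Lambda|$: one must show that the idempotent $e$ associated with the deleted projective part is exactly the one that kills the simples not appearing as tops in $\mathcal{T}$, and this rigidity of the counting demands a careful analysis of the minimal left $\operatorname{add} P(\mathcal{T})$-approximation of $\Lambda$ and the interplay between $\operatorname{Ext}$-projectives and the tops of $\mathcal{T}$.
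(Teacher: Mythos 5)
First, a point of comparison: the paper does not prove this statement at all; it is quoted verbatim from [AIR, Theorem 2.7] as a preliminary (its Theorem 2.3), so there is no internal proof to measure your sketch against. What you have written is an outline of the original Adachi--Iyama--Reiten argument, and it has the right overall shape: $\opname{Fac}T$ is an extension-closed torsion class because $\tau$-rigidity translates, via Auslander--Smal{\o}/Auslander--Reiten duality, into $\Ext^1_\Lambda(T,\opname{Fac}T)=0$; $P(\mathcal{T})$ is $\tau$-rigid for the same reason; and one checks the two assignments are mutually inverse.

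However, two load-bearing steps are not actually carried, and one of them is circular as written. (i) To prove that the cokernel $M''$ of the minimal left $\add P(\mathcal{T})$-approximation $f\colon\Lambda\to M'$ is Ext-projective in $\mathcal{T}$, your long-exact-sequence argument must extend arbitrary maps $\im f\to N$ with $N\in\mathcal{T}$ over $M'$, and this requires $f$ to be a left $\mathcal{T}$-approximation, i.e.\ it requires $\mathcal{T}\subseteq\opname{Fac}P(\mathcal{T})$ --- precisely the identity you postpone to the last paragraph and there attribute to an ``Auslander--Smal{\o} construction by iterated right approximations of injectives''. That is not where this identity comes from: the cited [AS, Corollary 4.4] only yields that $P(\mathcal{T})$ has finitely many indecomposable summands. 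The non-circular order is the reverse one: start from a minimal left $\mathcal{T}$-approximation $\Lambda\to T^0$ (it exists by covariant finiteness), use left minimality and Wakamatsu's lemma to see that $T^0$ and its cokernel are Ext-projective in $\mathcal{T}$, deduce $\mathcal{T}=\opname{Fac}T^0\subseteq\opname{Fac}P(\mathcal{T})$, and only then apply the criterion of Proposition 2.2 (which is what you call Proposition 2.3; note also that Proposition 2.2 then absorbs the counting $|X|+|P|=|\Lambda|$, so that part of your worry is handled by the citation). (ii) The converse inclusion $P(\opname{Fac}T)\in\add T$ --- that a support $\tau$-tilting $T$ already accounts for \emph{all} indecomposable Ext-projectives of $\opname{Fac}T$ --- is not a ``Wakamatsu-type'' consequence of the approximation sequence: the natural argument shows that any Ext-projective $X$ of $\opname{Fac}T$ makes $(T\oplus X,P)$ a $\tau$-rigid pair, and one concludes $X\in\add T$ only from the maximality of support $\tau$-tilting pairs among $\tau$-rigid pairs (equivalently the bound on the number of summands of a $\tau$-rigid pair), a genuinely nontrivial result of [AIR] that you neither prove nor cite. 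Two smaller repairs: a right $\add T$-approximation is not itself a right $\opname{Fac}T$-approximation of $X$ (its image, the trace of $T$ in $X$, gives the approximation), and one cannot ``compose'' a surjection $P\to X$ with a left $\add T$-approximation of $P$; one must pass to the induced map on quotients, which does work and proves covariant finiteness of $\opname{Fac}T$.
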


\vspace{0.2cm}

{\bf 2.3. Silting complexes}

\vspace{0.2cm}

%Let $\Lambda$ be a finite-dimensional k-algebra and $\opname K$$^{b}(\proj \Lambda)$ be the homotopy category of
%bounded complexes of finitely generated projective $\Lambda$-module. We recall the definition of silting complexes and two-term silting complexes from [AiI].

Recall from [AiI] that $P \in \opname K^{b}(\proj \Lambda)$ is called {\it silting} if
$\Hom_{\opname K^{b}(\proj \Lambda)}(P ,P[i])=0$ for any $i>0$ and $\opname K^{b}(\proj \Lambda)$ is the smallest
full subcategory of $\opname K^{b}(\proj \Lambda)$ containing $P$ and is closed under cones, $[\pm 1]$ and direct summands;
and a complex $P=(P^{i},d^{i})$ in $\opname K^{b}(\proj \Lambda)$ is called {\it two-term} if $P^{i}=0$ for all $i\neq0, -1$.
We denote by $2$-$\silt\Lambda$ the set of isomorphism classes of basic two-term silting complexes in $\opname K^{b}(\proj \Lambda)$.
The following result establishes a relation between $2$-$\silt\Lambda$ and $s\tau$-$\tilt\Lambda$.

\begin{theorem} {\rm ([AIR, Theorem 3.2])}
There exists a bijection:
\begin{center}
$2$-$\silt\Lambda\longleftrightarrow s\tau$-$\tilt\Lambda$
\end{center}
given by $2$-$\silt\Lambda \ni P\mapsto H^{0}(P) \in s\tau$-$\tilt\Lambda$ and $s\tau$-$\tilt\Lambda \ni
(M,P)\mapsto (P_{1}\oplus P \buildrel {(f, 0)} \over \rightarrow P_{0}) \in 2$-$\silt\Lambda$, where $f:P_{1}\rightarrow P_{0}$
is a minimal projective presentation of $M$.
\end{theorem}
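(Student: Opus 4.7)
The plan is to construct maps in both directions and verify they are mutually inverse. The forward map sends $P^\bullet \in 2$-$\silt\Lambda$ to $H^{0}(P^\bullet)$, while the backward map is the one given in the statement. The main technical ingredient is a natural identification, for two-term complexes $P^\bullet, Q^\bullet$, between $\Hom_{\opname{K}^{b}(\proj \Lambda)}(P^\bullet, Q^\bullet[1])$ and the $\tau$-$\Hom$ groups of their cohomologies (together with contributions from the stalk summands of $P^{\bullet}$), obtained by unwinding the cochain complex of morphisms and applying the Auslander-Reiten formula to the projective presentations encoded by $P^{\bullet}$ and $Q^{\bullet}$.

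For the well-definedness of the forward map, I would start from a basic two-term silting complex $P^\bullet = (P^{-1} \xrightarrow{d} P^{0})$ and decompose $P^{-1} = P_{1} \oplus P'$ so that $d|_{P'} = 0$ and $(P_{1} \xrightarrow{d|_{P_{1}}} P_{0})$ is the minimal projective presentation of $M := H^{0}(P^\bullet) = \cok d$. The silting identity $\Hom_{\opname{K}^{b}(\proj \Lambda)}(P^\bullet, P^\bullet[1]) = 0$, via the translation described above, yields simultaneously $\Hom_{\Lambda}(M, \tau M) = 0$ and $\Hom_{\Lambda}(P', M) = 0$, so that $(M, P')$ is a $\tau$-rigid pair by Proposition 2.1. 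To upgrade $\tau$-rigidity to the support $\tau$-tilting property, I would invoke Proposition 2.2: the generation condition $\thick P^\bullet = \opname{K}^{b}(\proj \Lambda)$ forces $\Lambda \in \thick P^\bullet$, and resolving $\Lambda$ through $P^{\bullet}$ produces an exact sequence $\Lambda \to M' \to M'' \to 0$ in $\mod \Lambda$ with $M', M'' \in \add M$ and with the first map a left $\add M$-approximation of $\Lambda$.

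For the well-definedness of the backward map $\Psi(M, P) = (P_{1} \oplus P \xrightarrow{(f, 0)} P_{0})$, the $\tau$-rigid pair condition on $(M, P)$ translates back, via the same identification, to the vanishing $\Hom_{\opname{K}^{b}(\proj \Lambda)}(\Psi(M,P), \Psi(M,P)[1]) = 0$. The generation condition is checked by using the approximation sequence of Proposition 2.2 to realize $\Lambda$ inside $\thick \Psi(M,P)$ (via the mapping cone on the minimal presentation and the approximation triangle), together with the observation that $P$ is already a stalk summand of $\Psi(M, P)$. Mutual inversion is then essentially tautological from the decomposition used above: $H^{0}(\Psi(M, P)) = M$ with stalk part $P$, while for a basic $P^\bullet$ the minimality of its presentation forces $\Psi\Phi(P^\bullet) = P^\bullet$.

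The main obstacle I anticipate is the translation lemma itself: identifying $\Hom_{\opname{K}^{b}(\proj \Lambda)}(P^\bullet, Q^\bullet[1])$ with a combination of $\Hom_{\Lambda}(H^{0}(P^\bullet), \tau H^{0}(Q^\bullet))$ and a $\Hom_{\Lambda}(\text{stalk part}, H^{0}(Q^\bullet))$ contribution is the technical heart of the bijection, and requires establishing an Auslander-Reiten-type formula adapted to the two-term setting. Secondary care is also needed to preserve basic-ness and minimality throughout, since spurious summands of $P^{-1}$ lying in the image of $\add P^{0}$ would need to be excluded in order to reconstruct $P^{\bullet}$ from its cohomology.
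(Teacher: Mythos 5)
The paper does not actually prove this statement; it is quoted as a preliminary from [AIR, Theorem 3.2], so there is no internal proof to compare against. Your outline is, in essence, a faithful reconstruction of the original Adachi--Iyama--Reiten argument. The decomposition $P^{-1}=P_{1}\oplus P'$ with $d|_{P'}=0$ and $P_{1}\to P_{0}$ the minimal presentation of $H^{0}(P^{\bullet})$ is valid over a finite-dimensional algebra (split off a projective cover of $\im d$ inside $P^{-1}$; the complement is killed by $d$), and your ``translation lemma'' is exactly their Lemma 3.4: it is not a new AR-type formula to be established, but the classical Auslander--Smal{\o} criterion ($\Hom_{\Lambda}(N,\tau M)=0$ iff $\Hom_{\Lambda}(f,N)$ is surjective for $f$ a projective presentation of $M$) combined with an explicit computation of homotopy classes of maps between two-term complexes. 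Your observation that the silting identity restricted to the stalk summand $P'$ yields $\Hom_{\Lambda}(P',M)=0$ is correct, as is the mutual-inversion argument via minimality. The one step you pass over too quickly is extracting, from the abstract condition $\Lambda\in\thick P^{\bullet}$, an actual triangle $\Lambda\to P'\to P''\to\Lambda[1]$ with $P',P''\in\add P^{\bullet}$ whose $H^{0}$ is the required approximation sequence; in [AIR] this equivalence (their Proposition 3.3, proved via a Bongartz-type completion and the count $|P^{\bullet}|=|\Lambda|$) is where the real work of upgrading presilting to silting lives, and it needs a proof rather than the phrase ``resolving $\Lambda$ through $P^{\bullet}$''. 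With that lemma supplied, your proposal is sound and coincides with the standard proof.
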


\vspace{0.2cm}

{\bf 2.4. Cluster tilting objects}

\vspace{0.2cm}

Let $\mathcal{C}$ be a $k$-linear $\Hom$-finite Krull--Schmidt triangulated category. Assume that $\mathcal{C}$ is a
2-Calabi-Yau triangulated category, that is, there exists a functorial isomorphism:
$$D \Ext^{1}_{\mathcal{C}}(X,Y)\cong \Ext^{1}_{\mathcal{C}}(Y,X).$$ An important class of objects in such categories is
that of cluster-tilting objects. Following [BMRRT], an object $T\in\mathcal{C}$ is called {\it cluster-tilting} if
\begin{center}
$\add T=\{X\in \mathcal{C}\mid \Hom_{\mathcal{C}}(T,X[1])=0\}$.
\end{center}
We denote by $c$-$\tilt\mathcal{C}$ the set of isomorphism classes of basic cluster-tilting objects in $\mathcal{C}$.
Assume that $\mathcal{C}$ has a cluster-tilting object $T$ and $\Lambda:=\End_{\mathcal{C}}(T)^{op}$. For $X\in \mathcal{C}$, we have a triangle
$$T_{1}\buildrel {g} \over\rightarrow T_{0}\buildrel {f} \over\rightarrow X\rightarrow T_{1}[1], \eqno{(\ast)}$$
where $T_{1}, T_{0}\in \add T$ and $f$ is a minimal right $\add T$-approximation.

We have the following results, which will be used frequently in this paper.
\begin{theorem} {\rm ([BMR, Theorem 2.2] and [KR, p.126])}
There exists an equivalence of categories
\begin{center}
$\overline{(-)}:=\Hom_{\mathcal{C}}(T,-):\mathcal{C}/[T[1]]\rightarrow \mod\Lambda$,
\end{center}
where $[T[1]]$ is the ideal of $\mathcal{C}$ consisting of morphisms which factor through $\add T[1]$.
\end{theorem}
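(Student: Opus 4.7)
The plan is to check that $F := \Hom_{\mathcal{C}}(T,-)$ annihilates $[T[1]]$ and that the induced functor $\overline{F}\colon \mathcal{C}/[T[1]] \to \mod\Lambda$ is dense, full, and faithful. The two crucial ingredients are the defining property of cluster-tilting, $\add T = \{X \in \mathcal{C} \mid \Hom_{\mathcal{C}}(T,X[1]) = 0\}$, and the approximation triangle $(\ast)$ available for every $X \in \mathcal{C}$, which after applying $F$ becomes a projective presentation of $F(X)$ in $\mod\Lambda$; along the way one uses that $F$ restricts to the Yoneda equivalence $\add T \simeq \proj\Lambda$ (this is where $\Lambda = \End_{\mathcal{C}}(T)^{op}$ enters).

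That $F$ descends to the quotient is immediate from $\Hom_{\mathcal{C}}(T, T[1]) = 0$, since this forces $\Hom_{\mathcal{C}}(T, T'[1]) = 0$ for every $T' \in \add T$. For essential surjectivity, start with $M \in \mod\Lambda$ and a projective presentation $P_1 \to P_0 \to M \to 0$. Writing $P_i = F(T_i)$ with $T_i \in \add T$, the differential lifts via Yoneda to a morphism $g\colon T_1 \to T_0$ in $\mathcal{C}$; completing $g$ to a triangle $T_1 \xrightarrow{g} T_0 \to X \to T_1[1]$ and applying $F$ produces $F(X) \cong M$, since $F(T_1[1]) = 0$.

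For fullness, I would lift a morphism $\varphi\colon F(X) \to F(Y)$ through the triangles $(\ast)$ for $X$ and $Y$. Projectivity of $F(T_0^X)$ and $F(T_1^X)$ extends $\varphi$ to a morphism of projective presentations; through the Yoneda equivalence on $\add T$ this promotes to a morphism of the underlying triangles in $\mathcal{C}$, and the third component $X \to Y$ of that morphism of triangles has $F$-image equal to $\varphi$ by construction.

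The most delicate step is faithfulness. Given $f\colon X \to Y$ with $F(f) = 0$, take the triangle $T_1 \to T_0 \xrightarrow{p} X \to T_1[1]$ from $(\ast)$. Writing $T_0$ as a summand of $T^n$ via morphisms $\iota_j\colon T_0 \to T$ and $\pi_j\colon T \to T_0$ with $\sum_j \pi_j\iota_j = \id_{T_0}$, we compute $fp = \sum_j f(p\pi_j)\iota_j$; but each $p\pi_j$ lies in $\Hom_{\mathcal{C}}(T, X)$, and $F(f) = 0$ forces $f(p\pi_j) = 0$, so $fp = 0$. Turning the triangle, $f$ then factors through the cone $T_1[1]$ of $p$, that is, through an object of $\add T[1]$, so $f$ represents zero in $\mathcal{C}/[T[1]]$. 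The main obstacle is precisely this faithfulness argument, which rests on translating the global vanishing $\Hom_{\mathcal{C}}(T,f) = 0$ into the local vanishing $fp = 0$ by exploiting the additive decomposition of $T_0 \in \add T$.
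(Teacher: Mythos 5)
The paper does not actually prove this statement: it is quoted directly from [BMR, Theorem 2.2] and [KR, p.~126] and used as a black box, so there is no in-paper argument to compare against. Your proposal is a correct reconstruction of the standard proof from those sources --- the descent through $[T[1]]$ via $\Hom_{\mathcal{C}}(T,T[1])=0$, essential surjectivity and fullness via the triangle $(\ast)$ together with the Yoneda equivalence $\add T\simeq\proj\Lambda$, and the faithfulness step (forcing $fp=0$ by decomposing $\mathrm{id}_{T_0}$ through copies of $T$ and then factoring $f$ through the cone $T_1[1]\in\add T[1]$) are all sound.
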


\begin{theorem} {\rm ([AIR, Theorem 4.1])}
There exists a bijection:
\begin{center}
$c$-$\tilt\mathcal{C}\longleftrightarrow s\tau$-$\tilt\Lambda$
\end{center}
given by $c$-$\tilt\mathcal{C}\ni X=X'\oplus X''\mapsto \widetilde{X}:=(\overline{X'},
\overline{X''[-1])})\in s\tau$-$\tilt\Lambda$, where $X''$ is a maximal direct summand of $X$ belonging to $\add T[1]$.
\end{theorem}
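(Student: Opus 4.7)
The plan is to transport cluster-tilting objects to support $\tau$-tilting pairs through the equivalence $\overline{(-)} = \Hom_\mathcal{C}(T,-): \mathcal{C}/[T[1]] \to \mod\Lambda$ supplied by Theorem~2.5. Given a basic $X = X' \oplus X'' \in c$-$\tilt\mathcal{C}$ with $X''$ the maximal summand in $\add T[1]$, the module $\overline{X''[-1]}$ lies in $\add\overline{T} = \proj\Lambda$ and is basic, with indecomposable summands in bijection with those of $X''[-1] \in \add T$ under the equivalence. Likewise, indecomposable summands of $X'$ correspond bijectively to those of $\overline{X'}$. Since any basic cluster-tilting object in $\mathcal{C}$ has exactly $|\Lambda|$ non-isomorphic indecomposable summands (matching the number of simples of $\End_\mathcal{C}(X) \cong \End_\mathcal{C}(T)^{op}$), the count $|\overline{X'}| + |\overline{X''[-1]}| = |\Lambda|$ is correct. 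The $\tau$-rigidity of $\overline{X'}$ and the vanishing $\Hom_\Lambda(\overline{X''[-1]}, \overline{X'}) = 0$ then both follow from $\Ext^1_\mathcal{C}(X, X) = 0$ via the standard Keller--Reiten identification
\[
\Hom_\Lambda(\overline{Y}, \tau\overline{Z}) \cong D\Hom_{\mathcal{C}/[T[1]]}(Z, Y[1])
\]
for $Y, Z$ without summands in $\add T[1]$, combined with 2-Calabi-Yau duality $D\Ext^1_\mathcal{C}(A,B) \cong \Ext^1_\mathcal{C}(B,A)$.

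For the inverse, I would take a basic support $\tau$-tilting pair $(M, P)$, lift $M$ through the quasi-inverse of $\overline{(-)}$ to some $X' \in \mathcal{C}$ having no summand in $\add T[1]$, write $P \cong \overline{Q}$ for the unique basic $Q \in \add T$, and set $X := X' \oplus Q[1]$. To verify that $X$ is cluster-tilting, I would apply Proposition~2.2 to produce an exact sequence $\Lambda \to M' \to M'' \to 0$ in $\mod\Lambda$ with $M', M'' \in \add M$ and the left map a left $\add M$-approximation, then lift through Theorem~2.5 to a triangle $T \to X_0 \to X_1 \to T[1]$ in $\mathcal{C}$ with $X_0, X_1 \in \add X'$. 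For any $Y \in \mathcal{C}$ with $\Hom_\mathcal{C}(X, Y[1]) = 0$, combining this lifted triangle with the triangle $(\ast)$ applied to $Y$ and a diagram chase forces $Y$ to admit an $\add X$-presentation that, via 2-CY duality, places $Y$ itself in $\add X$.

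The main obstacle is this last verification: translating the support-completion condition $|M| + |P| = |\Lambda|$ and the $\tau$-rigidity of $(M,P)$ into the full $\add$-generation equality $\add X = \{Y \in \mathcal{C} : \Hom_\mathcal{C}(X, Y[1]) = 0\}$ defining cluster-tilting. The subtlety is that the approximation produced by Proposition~2.2 only sees objects through the quotient functor $\overline{(-)}$, so the projective summand $P = \overline{Q}$ must be handled separately — specifically, one must check that $Q[1]$, as a summand of $X$, contributes exactly the $\Ext^1$-vanishing required to cover objects lying in $\add T[1]$. Once cluster-tilting of $X$ is established, mutual inverseness of the two constructions is essentially formal: Theorem~2.5 identifies $X' \leftrightarrow M$ uniquely up to isomorphism, and Proposition~2.1(2) determines the projective $\overline{Q}$ uniquely from $M$, matching $Q[1]$ with $X''$ under the shift.
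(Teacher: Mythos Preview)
The paper does not prove this statement at all: it is Theorem~2.6, stated in the preliminaries as a citation of [AIR, Theorem~4.1] with no accompanying argument. Your proposal is therefore not comparable to any proof in the paper --- you are reconstructing the original [AIR] argument, not something the present authors supply.

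That said, your sketch is broadly in line with how [AIR] proceeds: the forward direction uses the Keller--Reiten formula to convert $\Ext^1$-vanishing in $\mathcal{C}$ into $\tau$-rigidity and the $\Hom_\Lambda(P,M)=0$ condition, together with the summand count $|X|=|\Lambda|$; the inverse lifts a support $\tau$-tilting pair through the equivalence of Theorem~2.5 and then must verify cluster-tilting. You have correctly located the one genuinely nontrivial step --- showing that the lifted object $X = X' \oplus Q[1]$ satisfies the $\add$-generation condition --- and your plan to use the approximation sequence from Proposition~2.2 lifted to a triangle is the right mechanism. In [AIR] this is handled by first establishing a bijection between rigid objects and $\tau$-rigid pairs (their Theorem~4.1 is stated more generally), so that the cluster-tilting condition reduces to maximality of the rigid object, which follows from the summand count. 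Your diagram-chase approach would also work but is slightly more hands-on than necessary.
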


\begin{theorem} {\rm ([AIR, Theorem 4.7])}
There exists a bijection:
\begin{center}
$c$-$\tilt\mathcal{C}\longleftrightarrow $2$ $-$\silt\Lambda$
\end{center}
given by $c$-$\tilt\mathcal{C}\ni X\mapsto (\overline{T_{1}}\buildrel {\overline{g}}
\over\rightarrow\overline{T_{0}})\in $2$ $-$\silt\Lambda$, where g is the morphism in $(\ast)$.
\end{theorem}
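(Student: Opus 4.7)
The plan is to construct this bijection as the composition of the two bijections established immediately above, namely $c$-$\tilt\cc \leftrightarrow s\tau$-$\tilt\Lambda$ (AIR 4.1) and the inverse of $2$-$\silt\Lambda \leftrightarrow s\tau$-$\tilt\Lambda$ (AIR 3.2), and then to verify that this composite is given by the explicit formula $X \mapsto (\ol{T_1} \xrightarrow{\ol{g}} \ol{T_0})$ coming from the approximation triangle $(\ast)$.

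First I would unwind what the composition predicts. Given $X \in c$-$\tilt\cc$, decompose $X = X' \oplus X''$ so that $X''$ is the maximal direct summand lying in $\add T[1]$, and write $X'' = T''[1]$ with $T'' \in \add T$. By AIR 4.1, $X$ corresponds to the support $\tau$-tilting pair $(\ol{X'}, \ol{T''})$; by AIR 3.2, this pair in turn corresponds to the $2$-term silting complex $P_1 \oplus \ol{T''} \xrightarrow{(f,\,0)} P_0$, where $P_1 \xrightarrow{f} P_0$ is a minimal projective presentation of $\ol{X'}$. The task then reduces to identifying this complex with $\ol{T_1} \xrightarrow{\ol{g}} \ol{T_0}$.

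Next I would split the triangle $(\ast)$ according to $X = X' \oplus X''$. On the $X'$ summand, writing the triangle as $T_1' \xrightarrow{g'} T_0' \xrightarrow{f'} X' \to T_1'[1]$ with $f'$ a minimal right $\add T$-approximation, apply $\Hom_\cc(T,-)$; using $\Hom_\cc(T, T_1'[1]) = 0$ (since $T_1' \in \add T$ and $T$ is cluster-tilting) yields an exact sequence $\ol{T_1'} \xrightarrow{\ol{g'}} \ol{T_0'} \to \ol{X'} \to 0$, which I would then argue is a minimal projective presentation of $\ol{X'}$. On the $X'' = T''[1]$ summand, the vanishing $\Hom_\cc(T, T''[1]) = 0$ forces the zero map to be the minimal right $\add T$-approximation, so the triangle collapses to $T'' \xrightarrow{\sim} T_1''$ with $T_0'' = 0$. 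Assembling gives $\ol{T_0} = \ol{T_0'} = P_0$, $\ol{T_1} = \ol{T_1'} \oplus \ol{T''} = P_1 \oplus \ol{T''}$, and $\ol{g} = (\ol{g'}, 0) = (f, 0)$, matching the predicted complex.

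The main technical obstacle I expect is the minimality bookkeeping on the $X'$ part: one must check that, under the BMR/KR equivalence $\cc/[T[1]] \simeq \mod\Lambda$, a minimal right $\add T$-approximation of $X'$ becomes a projective cover of $\ol{X'}$, and that the syzygy map $\ol{g'}$ inherits minimality. This relies on the fact that the equivalence sends $\add T$ bijectively onto $\proj\Lambda$, that it is additive and Krull--Schmidt, and that the triangle $(\ast)$ can be assumed to carry no split summand of the form $\id_U : U \to U$ with $U \in \add T$; once these are in place, the identification of the complexes—and hence the explicit description of the composed bijection—follows.
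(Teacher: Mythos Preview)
The paper does not supply its own proof of this statement: Theorem~2.7 is simply quoted from [AIR, Theorem~4.7] as background, with no argument given. So there is no proof in the paper to compare against.

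That said, your proposal is correct and is precisely the route taken in [AIR]: the bijection of Theorem~4.7 there is obtained by composing the bijections of Theorems~4.1 and~3.2 and then identifying the resulting map with the explicit formula coming from the approximation triangle $(\ast)$. Your decomposition $X = X' \oplus X''$, the splitting of the triangle into the two summands, and the observation that on the $X'' = T''[1]$ part the minimal approximation is zero so the triangle collapses to $T'' \to 0 \to T''[1] \to T''[1]$, are exactly the ingredients used. The minimality issue you flag is also the genuine technical point: one needs that under the equivalence $\cc/[T[1]] \simeq \mod\Lambda$ the minimal right $\add T$-approximation of $X'$ is sent to a projective cover, and that the resulting two-term complex has no contractible direct summand. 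Both follow because the equivalence restricts to $\add T \simeq \proj\Lambda$, is Krull--Schmidt, and preserves and reflects right minimality (a right minimal morphism in a Krull--Schmidt category is one in which no indecomposable summand of the source is sent to zero in the cokernel, and this is detected by the additive equivalence). Once that is in hand, your identification $\ol{T_1} \xrightarrow{\ol g} \ol{T_0} \;\cong\; P_1 \oplus \ol{T''} \xrightarrow{(f,0)} P_0$ goes through.
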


\vspace{0.2cm}

{\bf 2.5. Skew group algebras}

\vspace{0.2cm}

In this subsection, we recall the definition of skew group algebras and some useful results from [RR].

Let $\Lambda$ be an algebra and $G$ be a group with identity 1. Consider an action of $G$ on $\Lambda$,
that is a map $G\times \Lambda \rightarrow \Lambda$ via $(\sigma,\lambda)\mapsto \sigma(\lambda)$ such that

\begin{itemize}
\item[(1)] For any $\sigma$ in $G$, the map $\sigma: \Lambda \rightarrow \Lambda$ is an algebra automorphism.
\item[(2)] $(\sigma \sigma')(\lambda)=\sigma(\sigma'(\lambda))$ for any $\sigma, \sigma'\in G$ and $\lambda\in\Lambda$.
\item[(3)] 1$(\lambda)=\lambda$ for any $\lambda \in\Lambda$.
\end{itemize}

Let $G$ be a finite group.
%whose order is invertible in $\Lambda$ and $G\rightarrow \Aut\Lambda$ a group homomorphism.
For any $X\in \mod\Lambda$ and $\sigma\in G$,
let $^{\sigma}\hspace{-2pt}X$ be a $\Lambda$-module as follows: as a $k$-vector space $^{\sigma}\hspace{-2pt}X=X$,
the action on $^{\sigma}\hspace{-2pt}X$ is given by $\lambda\cdot x=\sigma^{-1}(\lambda)x$ for any $\lambda\in\Lambda$
and $x\in X$. Given a morphism of $\Lambda$-modules $f:X\rightarrow Y$, define
$^{\sigma}\hspace{-2pt}f: {^{\sigma}\hspace{-2pt}X}$ $\rightarrow$ $^{\sigma}\hspace{-1pt}Y$
by $^{\sigma}\hspace{-2pt}f(x)=f(x)$ for any $x\in$ $^{\sigma}\hspace{-2pt}X$. Then $^{\sigma}\hspace{-2pt}f$
is also a $\Lambda$-homomorphism. Indeed, for any $x\in X$ and $\lambda\in\Lambda$, we have
$$^{\sigma}\hspace{-2pt}f(\lambda\cdot x)=f(\sigma^{-1}(\lambda)x)=\sigma^{-1}(\lambda)f(x)=\lambda\cdot{^{\sigma}\hspace{-2pt}f(x)}.$$
For any $X,Y\in \mod\Lambda$ and
$f\in \Hom_{\Lambda}(X,Y)$, we define a functor $^{\sigma}(-)$  by  $^{\sigma}(-)(X)$=$^{\sigma}\hspace{-2pt}X$ and
$^{\sigma}(-)(f)={^{\sigma}\hspace{-2pt}f}$. One can check that $^{\sigma}(-):\mod\Lambda\rightarrow \mod\Lambda$
is an automorphism and the inverse is $^{\sigma ^{-1}}(-)$.
So we have that $X\in \mod\Lambda$ is indecomposable (respectively, projective, injective, simple) if and only if so is
$^{\sigma}\hspace{-2pt}X$ in $\mod\Lambda$.

The {\it skew group algebra} $\Lambda G$ of $G$ over $\Lambda$ is given by the following data:

\begin{itemize}
\item[(1)] As an abelian group $\Lambda G$ is a free left $\Lambda$-module with the elements of $G$ as a basis.
\item[(2)] The multiplication in $\Lambda G$ is defined by the rule $(\lambda_{\sigma}\sigma)
(\lambda_{\tau}\tau)=(\lambda_{\sigma}\sigma(\lambda_{\tau}))\sigma\tau$ for any
$\lambda_{\sigma},\lambda_{\tau}\in\Lambda$ and $\sigma,\tau\in G$.
\end{itemize}

When $G$ is a finite group such that $|G|$ is invertible in $\Lambda$, the natural inclusion $\Lambda\hookrightarrow\Lambda G$
induces the induction functor $F=\Lambda G\otimes _{\Lambda}-: \mod\Lambda\rightarrow \mod\Lambda G$
and the restriction functor $H: \mod\Lambda G\rightarrow \mod\Lambda$.

\begin{lemma} {\rm ([RR, p.227 and p.235])} Let $G$ be a finite group such that $|G|$ is invertible in $\Lambda$. Then we have
\begin{itemize}
\item[(1)] $(F,H)$ and $(H,F)$ are adjoint pairs of functors. Consequently, $F$ and $H$ are both exact,
and hence both preserve projective modules and injective modules.
\item[(2)] Let $M\in \mod\Lambda$ and $\sigma\in G$. Then the subset $\sigma\otimes_{\Lambda}M=
\{\sigma\otimes_{\Lambda}m | m\in M\}$ of $FM$ has a structure of $\Lambda$-module given by
$$\lambda(\sigma\otimes_{\Lambda}m)=\sigma\sigma^{-1}(\lambda)\otimes_{\Lambda}m=\sigma\otimes_{\Lambda}(\lambda\cdot m)$$
for any $\lambda\in\Lambda$, so that $\sigma\otimes_{\Lambda}M$
and $^{\sigma}\hspace{-2pt}M$ are isomorphic as $\Lambda$-modules. Therefore, as $\Lambda$-modules, we have
$$FM\cong \bigoplus_{\sigma\in G}(\sigma\otimes_{\Lambda} M)\cong\bigoplus_{\sigma\in G}{^{\sigma}\hspace{-2pt}M},$$
and then $$HFM\cong \bigoplus_{\sigma\in G}(\sigma\otimes_{\Lambda} M)\cong\bigoplus_{\sigma\in G}{^{\sigma}\hspace{-2pt}M}.$$
\end{itemize}
\end{lemma}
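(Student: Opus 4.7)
The plan is to handle the two parts of the lemma separately, and in each part to reduce to standard facts about tensor-hom adjunctions and the explicit multiplication rule $(\lambda_{\sigma}\sigma)(\lambda_{\tau}\tau)=\lambda_{\sigma}\sigma(\lambda_{\tau})\sigma\tau$ in $\Lambda G$.

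For part (1), the adjunction $(F,H)$ is immediate from the usual tensor-hom adjunction applied to $\Lambda G$ viewed as a $(\Lambda G,\Lambda)$-bimodule: for $M\in\mod\Lambda$ and $N\in\mod\Lambda G$ one has a natural bijection $\Hom_{\Lambda G}(\Lambda G\otimes_{\Lambda}M,N)\cong\Hom_{\Lambda}(M,HN)$. To get the second adjunction $(H,F)$, I would exhibit $\Lambda G$ as a Frobenius extension of $\Lambda$: the $\Lambda$-linear trace $\epsilon:\Lambda G\to\Lambda$ sending $\sum_{\sigma}\lambda_{\sigma}\sigma$ to $\lambda_1$ defines a nondegenerate $\Lambda$-bilinear form whose induced morphism $\Lambda G\to\Hom_{\Lambda}(\Lambda G,\Lambda)$ is an isomorphism of $(\Lambda G,\Lambda G)$-bimodules. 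Combining this with the standard adjunction $\Hom_{\Lambda}(HM,N)\cong\Hom_{\Lambda G}(M,\Hom_{\Lambda}(\Lambda G,N))$ yields $(H,F)$. The consequences are then formal: each of $F$ and $H$ is simultaneously a left and right adjoint, so both are exact, and they preserve projectives (being left adjoints to exact functors) and injectives (being right adjoints to exact functors).

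For part (2), the key point is a one-line manipulation in $\Lambda G$: from the multiplication rule one obtains $\lambda\sigma=\sigma\,\sigma^{-1}(\lambda)$, hence $\lambda(\sigma\otimes_{\Lambda}m)=\lambda\sigma\otimes_{\Lambda}m=\sigma\otimes_{\Lambda}\sigma^{-1}(\lambda)m$. This simultaneously shows that $\sigma\otimes_{\Lambda}M$ is a $\Lambda$-submodule of $FM$ and identifies the obvious map $\sigma\otimes_{\Lambda}m\mapsto m$ as an isomorphism $\sigma\otimes_{\Lambda}M\iso{^{\sigma}\hspace{-2pt}M}$ of $\Lambda$-modules, since by the definition of ${^{\sigma}\hspace{-2pt}M}$ the scalar $\lambda$ acts via $\sigma^{-1}(\lambda)$. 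The decomposition $FM\cong\bigoplus_{\sigma\in G}(\sigma\otimes_{\Lambda}M)$ of $\Lambda$-modules then follows from the fact that $\Lambda G$ is free over $\Lambda$ with basis $G$, and the statement about $HFM$ is obtained simply by forgetting the $\Lambda G$-structure.

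The main obstacle I would expect is keeping the left and right $\Lambda$-actions straight in the Frobenius step of part (1): verifying that $\Lambda G\cong\Hom_{\Lambda}(\Lambda G,\Lambda)$ really is an isomorphism of \emph{bimodules} requires tracking both actions through the trace $\epsilon$, and the asymmetry introduced by $\lambda\sigma=\sigma\,\sigma^{-1}(\lambda)$ is easy to mishandle. Once that identification is secured, the rest of the proof consists of standard adjunction bookkeeping and the short computation carried out above; the invertibility of $|G|$, while not strictly needed for the Frobenius identification, will be essential for downstream applications such as semisimplicity of $kG$ and splitting of unit/counit maps via averaging.
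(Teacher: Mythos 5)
The paper does not prove this lemma at all: it is quoted verbatim from Reiten--Riedtmann ([RR, p.~227 and p.~235]), so there is no internal argument to compare against. Your self-contained proof is essentially correct. The tensor--hom adjunction gives $(F,H)$; part (2) is exactly the right computation, since $\lambda\sigma=\sigma\,\sigma^{-1}(\lambda)$ in $\Lambda G$ shows each $\sigma\otimes_{\Lambda}M$ is a $\Lambda$-submodule of $FM$ isomorphic to $^{\sigma}\hspace{-2pt}M$, and $\Lambda G$ being free as a right $\Lambda$-module with basis $G$ gives the direct sum decomposition, with $HFM$ just the restriction. For $(H,F)$ your Frobenius-extension route works, but fix one piece of bookkeeping: the map $x\mapsto\epsilon(-\,x)$ identifies $\Lambda G$ with $\Hom_{\Lambda}({}_{\Lambda}\Lambda G,\Lambda)$ as $(\Lambda G,\Lambda)$-bimodules, not $(\Lambda G,\Lambda G)$-bimodules --- the Hom has no natural right $\Lambda G$-action because $\Lambda$ is not a right $\Lambda G$-module; the $(\Lambda G,\Lambda)$-bimodule isomorphism (checked via the dual bases $\{\sigma\}$, $\{\sigma^{-1}\}$) is exactly what is needed to identify coinduction with induction, since $\Lambda G$ is finitely generated projective over $\Lambda$. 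It is also worth noting, as you do, that this route makes the adjunctions and exactness independent of the invertibility of $|G|$; the argument one would extract from [RR] instead constructs the unit and counit of $(H,F)$ by the averaging map $\frac{1}{|G|}\sum_{\sigma\in G}(\cdots)$, which uses $|G|^{-1}$ but is what powers the separability-type consequences used later in the paper. Either way the stated conclusions about exactness and preservation of projectives and injectives follow formally, as you say, from each functor being both a left and a right adjoint of an exact functor.
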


\section{$G$-stable support $\tau$-tilting modules}

\vspace{0.2cm}

In this section we investigate the relationship among $G$-stable support $\tau$-tilting modules,
$G$-stable two-term silting complexes, $G$-stable functorially finite torsion classes and
$G$-stable cluster-tilting objects. From now on, $\Lambda$ is an algebra with action by a finite group $G$.

\vspace{0.2cm}

{\bf 3.1. Some definitions}

\vspace{0.2cm}

In this subsection, we introduce the notions of $G$-stable support $\tau$-tilting modules, $G$-stable torsion classes
and $G$-stable two-term silting complexes.

Recall from [DLS] that a tilting $\Lambda$-module $T$
is called {\it $G$-stable} if $^{\sigma} T\cong T$ for any $\sigma \in G$. Motivated by this, we introduce
the following

\begin{definition}

\begin{itemize}
\item[]
\item[(1)] We say that a support $\tau$-tilting module $X$ in $\mod\Lambda$ is {\it $G$-stable}
if $^{\sigma}\hspace{-2pt} X\cong X$ for any $\sigma \in G$.
\item[(2)] We say that a support $\tau$-tilting pair (or a $\tau$-rigid pair) $(X, P)$ for $\Lambda$ is {\it $G$-stable}
if $^{\sigma}\hspace{-2pt} X\cong X$ and $^{\sigma}\hspace{-2pt} P\cong P$ for any $\sigma \in G$.
%In fact, $^{\sigma}\hspace{-2pt} P$$\cong P$ always holds true in this case, so we only consider the stability of $X$, see Lemma 3.6.
\item[(3)] We say that a torsion class $\mathcal{T}$ is {\it $G$-stable} if $^{\sigma} \mathcal{T}=\mathcal{T}$ for any $\sigma \in G$.
\end{itemize}
\end{definition}

We denote by $G$-$s\tau$-$\tilt\Lambda$ the set of isomorphism classes of basic $G$-stable support $\tau$-tilting $\Lambda$-modules
and $G$-$f$-$\tors\Lambda$ the set of $G$-stable functorially finite torsion classes in $\mod\Lambda$.

The following result shows that in a support $\tau$-tilting pair $(T, P)$ the $G$-stability of $T$ implies the $G$-stability of the pair.

\begin{proposition} Let $T$ be a $\Lambda$-module and $P$ a projective $\Lambda$-module.
Then $(T, P)\in s\tau$-$\tilt\Lambda$ if and only if $(^{\sigma}T, {^{\sigma}P})\hspace{-2pt}\in s\tau$-$\tilt\Lambda$.
Moreover, if $(T, P)\in s\tau$-$\tilt\Lambda$ and $T$ is a $G$-stable support $\tau$-tilting module, then $P$ is $G$-stable.
\end{proposition}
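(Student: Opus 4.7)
The plan is to first observe that the twisting functor $^{\sigma}(-)\colon \mod\Lambda \to \mod\Lambda$ is an autoequivalence with inverse $^{\sigma^{-1}}(-)$, as already noted in Subsection~2.5. In particular, it is exact, preserves indecomposability, sends $\proj\Lambda$ into itself, and preserves the number of non-isomorphic indecomposable direct summands, so that $|{^{\sigma}X}|=|X|$ for every $X\in\mod\Lambda$. A routine check (for instance via minimal projective presentations, or because $^{\sigma}(-)$ commutes with both $\Hom_\Lambda(-,\Lambda)$ and the $k$-duality $D$) also shows that there is a natural isomorphism $\tau({^{\sigma}X})\cong {^{\sigma}(\tau X)}$ for every $X\in\mod\Lambda$.

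For the first assertion, it then suffices to verify the defining conditions of a support $\tau$-tilting pair after applying $^{\sigma}(-)$. Using the autoequivalence property and the above natural isomorphism, I would compute
\[
\Hom_{\Lambda}\bigl({^{\sigma}T},\tau({^{\sigma}T})\bigr)\cong\Hom_{\Lambda}\bigl({^{\sigma}T},{^{\sigma}(\tau T)}\bigr)\cong\Hom_{\Lambda}(T,\tau T)=0,
\]
and similarly $\Hom_{\Lambda}({^{\sigma}P},{^{\sigma}T})\cong\Hom_{\Lambda}(P,T)=0$. Moreover, $^{\sigma}P$ is projective because $^{\sigma}(-)$ preserves $\proj\Lambda$. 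The summand count gives $|{^{\sigma}T}|+|{^{\sigma}P}|=|T|+|P|=|\Lambda|$. Hence $({^{\sigma}T},{^{\sigma}P})\in s\tau\text{-}\tilt\Lambda$. Applying the same argument with $\sigma^{-1}$ in place of $\sigma$ gives the converse, so the equivalence is established.

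For the moreover part, suppose $(T,P)\in s\tau\text{-}\tilt\Lambda$ with $T$ $G$-stable. By the first part, $({^{\sigma}T},{^{\sigma}P})$ is a support $\tau$-tilting pair, so $(T,{^{\sigma}P})$ is a support $\tau$-tilting pair via the isomorphism $^{\sigma}T\cong T$. Since both $(T,P)$ and $(T,{^{\sigma}P})$ are support $\tau$-tilting pairs with the same first component, Proposition~2.1(2) forces $\add P=\add{^{\sigma}P}$, and basicness then yields $^{\sigma}P\cong P$.

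The only nontrivial technical point is the compatibility $\tau({^{\sigma}X})\cong{^{\sigma}(\tau X)}$; once this is in hand, the rest reduces to transporting the defining properties through an autoequivalence and invoking the uniqueness statement in Proposition~2.1(2). I expect this compatibility to be the main thing to justify carefully, though it is essentially formal once one notes that $^{\sigma}(-)$ preserves projectives and commutes with $D$.
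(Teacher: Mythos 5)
Your proposal is correct and follows essentially the same route as the paper: both rely on $^{\sigma}(-)$ being an automorphism of $\mod\Lambda$ commuting with $\tau$ (the paper cites the proof of [RR, Lemma 4.1] for this compatibility, which is the technical point you rightly flag), transport the defining conditions of a support $\tau$-tilting pair through it, and deduce the $G$-stability of $P$ from the uniqueness statement in Proposition~2.1(2). Your write-up is somewhat more explicit (e.g.\ checking $\Hom_{\Lambda}({^{\sigma}P},{^{\sigma}T})=0$ separately), but there is no substantive difference in method.
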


\begin{proof} Since $^{\sigma}(-)$ is an automorphism commuting with $\tau$ (see the proof of [RR, Lemma 4.1]), we have that
$|T|+|P|=|\Lambda|$ if and only if $|^{\sigma}\hspace{-1pt}T|+|^{\sigma}\hspace{-2pt}P|=|\Lambda|$, and that
$\Hom_{\Lambda}(T, \tau T)=0$ if and only if $\Hom_{\Lambda}(^{\sigma}T,{^{\sigma}\tau T})=0$, and if and only if
$\Hom_{\Lambda}(^{\sigma}T, \tau{^{\sigma}T})=0$. So $T$ is $\tau$-rigid if and only if $^{\sigma}\hspace{-1pt}T$ is $\tau$-rigid.
Thus the former assertion follows.
If $T$ is a $G$-stable support
$\tau$-tilting module, then by Proposition 2.1 we have that $P$ is also $G$-stable.
\end{proof}

For any complex $M^{\bullet}$=$( M^{i}, d^{i}_{M^{\bullet}})$$_{i\in \mathbb{Z}}$ over $\mod\Lambda$ and $\sigma\in G$,
let $^{\sigma}\hspace{-2pt}M^{\bullet}$ be the complex $(^{\sigma}\hspace{-2pt}M^{i},
{^{\sigma}\hspace{-3pt}d^{i}_{M^{\bullet}}})_{i\in \mathbb{Z}}$, where $\mathbb{Z}$ is the ring of integers. Moreover, given another complex
$N^{\bullet}=(N^{i}, d^{i}_{N^{\bullet}})_{i\in \mathbb{Z}}$ over $\mod\Lambda$ and a morphism of complexes
$f=(f^{i}: M^{i}\rightarrow N^{i})_{i\in \mathbb{Z}}$, let $^{\sigma}\hspace{-2pt}f=(^{\sigma}\hspace{-2pt}f^{i}:
{^{\sigma}\hspace{-2pt}M^{i}} \rightarrow {^{\sigma}\hspace{-2pt}N^{i}})$$_{i\in \mathbb{Z}}$. Clearly,
$^{\sigma}\hspace{-2pt}f$ is a morphism of complexes.

Since $^{\sigma}(-): \mod\Lambda \rightarrow \mod\Lambda$ is an automorphism, this construction is compatible
with the homotopy relation and preserves projective modules. This allows defining an automorphism $^{\sigma}(-):
\opname K^{b}(\proj \Lambda) \rightarrow \opname K^{b}(\proj \Lambda)$ for any $\sigma\in G$. In this way, we obtain an action by
$G$ on $\opname K^{b}(\proj \Lambda)$.

\begin{definition}
We call a basic two-term silting complex $P^{\bullet}\in \opname K^{b}(\proj \Lambda)$ {\it $G$-stable}
if $^{\sigma}\hspace{-2pt}P^{\bullet}\cong P^{\bullet}$ for any $\sigma\in G$.
\end{definition}

We denote by $G$-$2$-$\silt\Lambda$ the set of isomorphism classes of basic $G$-stable two-term silting complexes for $\Lambda$.

\vspace{0.2cm}

{\bf 3.2. Connection of $G$-$s\tau$-$\tilt\Lambda$ with $G$-$f$-$\tors\Lambda$ and $G$-$2$-$\silt\Lambda$}

\vspace{0.2cm}

In this subsection, we show that $G$-stable support $\tau$-tilting modules correspond bijectively to
$G$-stable functorially finite torsion classes as well as $G$-stable two-term silting complexes.

The following result establishes a one-to-one correspondence between $G$-stable support $\tau$-tilting
$\Lambda$-modules and $G$-stable functorially finite torsion classes in $\mod\Lambda$.

\begin{theorem} The bijection of Theorem 2.3 restricts to a bijection:
\begin{center}
$G$-$s$$\tau$-$\tilt\Lambda$ $\longleftrightarrow G$-$f$-$\tors\Lambda$.
\end{center}
\end{theorem}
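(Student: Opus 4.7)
The plan is to show that the bijection of Theorem 2.3 is equivariant with respect to the $G$-action induced by the autoequivalences $^{\sigma}(-)$, so it restricts to the desired bijection. The forward direction asks: if $T\in s\tau$-$\tilt\Lambda$ is $G$-stable, then so is $\opname{Fac} T$; the backward direction asks: if $\mathcal{T}\in f$-$\tors\Lambda$ is $G$-stable, then so is $P(\mathcal{T})$. The essential input is that, for each $\sigma\in G$, the functor $^{\sigma}(-):\mod\Lambda\to \mod\Lambda$ is an exact autoequivalence which preserves indecomposability, projectivity, and $\Ext^{1}$, and which commutes with $\tau$ (as already used in the proof of Proposition 3.2).

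For the forward direction, I would use exactness of $^{\sigma}(-)$: it preserves surjections, so it sends any factor module of $T^{n}$ to a factor module of $({^{\sigma}T})^{n}$, giving $^{\sigma}(\opname{Fac} T)=\opname{Fac}({^{\sigma}T})$. If $^{\sigma}T\cong T$, the right-hand side equals $\opname{Fac} T$, so $\opname{Fac} T$ is $G$-stable. It is worth remarking that $\opname{Fac} T$ lies in $f$-$\tors\Lambda$ by Theorem 2.3 applied to $T$, so no separate check of functorial finiteness is needed.

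For the backward direction, suppose $\mathcal{T}$ is $G$-stable and functorially finite, and let $X$ be an indecomposable $\Ext$-projective object of $\mathcal{T}$. Since $^{\sigma}(-)$ induces a natural isomorphism $\Ext^{1}_{\Lambda}(X,Y)\cong\Ext^{1}_{\Lambda}({^{\sigma}X},{^{\sigma}Y})$ and bijects $\mathcal{T}$ with $^{\sigma}\mathcal{T}=\mathcal{T}$, the object $^{\sigma}X$ is again indecomposable $\Ext$-projective in $\mathcal{T}$. Thus $^{\sigma}(-)$ permutes (up to isomorphism) the indecomposable summands that define $P(\mathcal{T})$, hence $^{\sigma}P(\mathcal{T})\cong P(\mathcal{T})$, as required.

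No step looks hard: the main point is simply that the construction $\opname{Fac}(-)$ and the notion of $\Ext$-projectivity are preserved by exact autoequivalences, and that $^{\sigma}(-)$ respects the set of indecomposables up to isomorphism. The only mild subtlety is confirming that $^{\sigma}(-)$ preserves the minimality implicit in $P(\mathcal{T})$ (namely, one copy per isomorphism class of indecomposable $\Ext$-projective), which follows from its being an autoequivalence. Since the maps $T\mapsto\opname{Fac} T$ and $\mathcal{T}\mapsto P(\mathcal{T})$ were already shown in Theorem 2.3 to be mutually inverse on the full sets, their restrictions to the $G$-stable subsets remain mutually inverse, completing the bijection.
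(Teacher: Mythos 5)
Your proposal is correct and follows essentially the same route as the paper: the forward direction uses that the exact autoequivalence $^{\sigma}(-)$ preserves surjections so that $^{\sigma}(\opname{Fac}T)=\opname{Fac}({^{\sigma}T})=\opname{Fac}T$, and the backward direction uses the induced isomorphism on $\Ext^{1}$ to conclude $^{\sigma}P(\mathcal{T})\cong P({^{\sigma}\mathcal{T}})=P(\mathcal{T})$. No gaps.
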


\begin{proof} Assume that $T$ is $G$-stable. For any $M\in$ $\mod\Lambda$ and $\sigma\in G$,
we have that for any $n\geq 1$, $T^n\rightarrow M$ is surjective if and only if so is $(^{\sigma}T)^n\rightarrow{^{\sigma}\hspace{-2pt}M}$. So we have
$$\opname{Fac}T=\opname{Fac} {^{\sigma}\hspace{-2pt}T}={^{\sigma}\hspace{-2pt}\opname{Fac} T}$$
for any $\sigma\in G$, that is, $\opname{Fac} T$ is $G$-stable.

Conversely, if $\mathcal{T}\in f$-$\tors\Lambda$ is $G$-stable, then $^{\sigma}\mathcal{T}= \mathcal{T}$ for any $\sigma\in G$.
Since $^{\sigma}(-):\mod\Lambda\rightarrow \mod\Lambda$ is an automorphism, we have
$$\Ext^{1}_{\Lambda}(-, ^{\sigma}\hspace{-2pt}\mathcal{T})\cong\Ext^{1}_{\Lambda}(^{\sigma^{-1}}(-), \mathcal{T}).$$
So $$P(\mathcal{T})=P(^{\sigma}\mathcal{T})\cong{^{\sigma}\hspace{-2pt}P(\mathcal{T})}$$
for any $\sigma\in G$, that is, $P(\mathcal{T})$ is $G$-stable.
\end{proof}

Recall that $M\in\mod\Lambda$ is {\it sincere} if every simple $\Lambda$-module appears as a composition factor in $M$.
This is equivalent to the condition that $\Hom_{\Lambda}(P, M)\neq 0$ for any indecomposable projective module $P$.
Also recall that $M\in \mod\Lambda$ is {\it faithful} if its left annihilator
$$\opname{Ann} M:= \{\lambda\in\Lambda\mid \lambda M=0\}=0.$$
A class of left $\Lambda$-modules $\mathcal{T}$ is {\it sincere} if for any indecomposable projective module $P$,
we have $$\Hom_{\Lambda}(P, \mathcal{T}):=\{\Hom_{\Lambda}(P, T)\mid T\in\mathcal{T}\}\neq 0.$$
A class of left $\Lambda$-modules $\mathcal{T}$ is {\it faithful} if
$$\opname{Ann} \mathcal{T}=\bigcap_{T\in\mathcal{T}}\opname{Ann}T=0.$$

The following result shows that support $\tau$-tilting modules can be regarded as a common generalization of $\tau$-tilting modules and tilting modules.

\begin{proposition} {\rm ([AIR, Proposition 2.2])}
\begin{itemize}
\item[(1)] $\tau$-tilting modules are precisely sincere support $\tau$-tilting modules.
\item[(2)] Tilting modules are precisely faithful support $\tau$-tilting modules.
\end{itemize}
\end{proposition}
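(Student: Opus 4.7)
The plan is to reduce both statements to the structural dichotomy from Proposition~2.1: every support $\tau$-tilting pair $(X,P)$ presents $X$ as a $\tau$-tilting $(\Lambda/\langle e\rangle)$-module for an idempotent $e$ with $\add \Lambda e = \add P$, and $P$ is uniquely determined by $X$. Since $|X| + |P| = |\Lambda|$, the module $X$ is $\tau$-tilting (over $\Lambda$ itself) if and only if $P = 0$. For part (1), once this equivalence is in place, it suffices to show $P = 0 \Leftrightarrow X$ is sincere. Writing $P = \Lambda e$, the defining condition $\Hom_{\Lambda}(P,X) = 0$ reads $eX = 0$, so $X$ is annihilated by $\langle e\rangle$ and its composition factors are precisely the simples of $\Lambda/\langle e\rangle$, missing the simple tops of the indecomposable summands of $\Lambda e$. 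Hence $P\neq 0$ forces some simple to be absent, and conversely sincerity forces $P=0$.

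For part (2), the forward direction is routine. If $X$ is tilting, the defining sequence $0 \to \Lambda \to X' \to X''\to 0$ with $X',X''\in\add X$ embeds $\Lambda$ into $X'\in\add X$, so $\opname{Ann} X\subseteq \opname{Ann}\Lambda=0$; $\tau$-rigidity follows from $\prdim X\leq 1$ and $\Ext^{1}_{\Lambda}(X,X)=0$ via the Auslander--Reiten formula $\Hom_{\Lambda}(X,\tau X)\cong D\Ext^{1}_{\Lambda}(X,X)$; and $|X|=|\Lambda|$ is available from the cited [B, Theorem 2.1]. So $X$ is a faithful support $\tau$-tilting module.

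The harder converse starts from a faithful support $\tau$-tilting $X$. First I would note that $P=0$, since $P=\Lambda e\neq 0$ would yield $0\neq e\in\langle e\rangle\subseteq \opname{Ann} X$, contradicting faithfulness. So $X$ is $\tau$-tilting and Proposition~2.2 provides an exact sequence $\Lambda\xrightarrow{f} M'\to M''\to 0$ with $M',M''\in\add X$ and $f$ a left $\add X$-approximation. The key sub-step is the identification $\Ker f = \opname{Ann} X$: the inclusion ``$\supseteq$'' is immediate since $\opname{Ann} X$ kills every element of $\add X$; for ``$\subseteq$'', every $g:\Lambda\to Y$ with $Y\in\add X$ factors through $f$, hence $\Ker f\subseteq\bigcap_{g}\Ker g$, and under $\Hom_{\Lambda}(\Lambda,X^{n}) \cong X^{n}$ this intersection equals $\bigcap_{x\in X}\opname{Ann}_{\Lambda}(x) = \opname{Ann} X$. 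Faithfulness then promotes the sequence to a short exact one $0 \to \Lambda\to M'\to M''\to 0$.

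The main obstacle, as I see it, is to upgrade this co-resolution of $\Lambda$ together with $\tau$-rigidity to the homological conditions $\prdim X\leq 1$ and $\Ext^{1}_{\Lambda}(X,X)=0$ that define tilting. I would combine the standard consequence $\Ext^{1}_{\Lambda}(X,\opname{Fac} X)=0$ of $\tau$-rigidity (so that $\Ext^{1}_{\Lambda}(X,M') = \Ext^{1}_{\Lambda}(X,M'') = 0$) with the long exact sequence obtained by applying $\Hom_{\Lambda}(X,-)$ to the short exact sequence above; a dimension-shifting argument together with $|X|=|\Lambda|$ should then force $\Ext^{i}_{\Lambda}(X,\Lambda)=0$ for all $i\geq 1$, yielding $\prdim X\le 1$ and, as a byproduct, $\Ext^{1}_{\Lambda}(X,X)=0$. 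Carrying out this last step cleanly, rather than invoking it as a black box, is what I expect to require the most care.
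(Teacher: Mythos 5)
The paper itself offers no proof of this proposition; it is quoted from [AIR, Proposition 2.2]. Your proposal therefore has to stand on its own, and as written it has a genuine gap in each part.

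In part (1) you reduce the claim to ``$P=0\Leftrightarrow X$ is sincere,'' but you only establish one implication. Your argument shows that $P\neq 0$ forces some simple to be absent, i.e.\ sincerity implies $P=0$; the sentence ``conversely sincerity forces $P=0$'' is the contrapositive of that same implication, not its converse. What is actually missing is the direction $P=0\Rightarrow X$ sincere, i.e.\ that a $\tau$-tilting module is sincere, and this is the nontrivial half. If $X$ is $\tau$-rigid with $|X|=|\Lambda|$ and some simple is not a composition factor of $X$, then $eX=0$ for a nonzero idempotent $e$ and $(X,\Lambda e)$ is a $\tau$-rigid pair with $|X|+|\Lambda e|=|\Lambda|+1$; the contradiction requires the bound $|X|+|P|\le |\Lambda|$ for arbitrary $\tau$-rigid pairs (equivalently, that a $\tau$-rigid module has at most $|\Lambda|$ indecomposable summands), a substantive theorem of [AIR] that you neither prove nor invoke.

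In part (2) the forward direction and the identification $\Ker f=\opname{Ann}X$ (hence the short exact sequence $0\to\Lambda\to M'\to M''\to 0$) are fine, but the final step --- deducing $\prdim X\le 1$ --- is precisely where the proof fails, and the route you sketch cannot be repaired. First, $\tau$-rigidity only yields $\Ext^{1}_{\Lambda}(X,\opname{Fac}X)=0$, not the vanishing of $\Ext^{\ge 2}_{\Lambda}(X,M')$, so dimension shifting along $0\to\Lambda\to M'\to M''\to 0$ does not propagate past $\Ext^{1}$. Second, and more fundamentally, $\Ext^{i}_{\Lambda}(X,\Lambda)=0$ for all $i\ge 1$ does not imply $\prdim X\le 1$: over a self-injective algebra every module satisfies this vanishing while typically having infinite projective dimension. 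The correct argument is the classical Auslander--Smal{\o} one: $\prdim X\le 1$ is equivalent to $\Hom_{\Lambda}(D\Lambda,\tau X)=0$; faithfulness of $X$ gives $D\Lambda\in\opname{Fac}X$, hence $\opname{Fac}D\Lambda\subseteq\opname{Fac}X$, and then $\Hom_{\Lambda}(D\Lambda,\tau X)=0$ follows from $\Ext^{1}_{\Lambda}(X,\opname{Fac}X)=0$ via the equivalence $\Hom_{\Lambda}(M,\tau X)=0\Leftrightarrow \Ext^{1}_{\Lambda}(X,\opname{Fac}M)=0$. Without some such input the converse of (2) is not established.
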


We denote by $G$-$s\hspace{-2pt}f$-$\tors\Lambda$ (respectively, $G$-$f\hspace{-3pt}f$-$\tors\Lambda$) the set of $G$-stable sincere
(respectively, faithful) functorially finite torsion classes in $\mod\Lambda$. Using Proposition 3.5, we get the following

\begin{theorem} The bijection in Theorem 3.4 restricts to bijections
\begin{center}
$G$-$\tau$-$\tilt\Lambda \longleftrightarrow G$-sf-$\tors\Lambda$ and $G$-$\tilt\Lambda \longleftrightarrow G$-ff-$\tors\Lambda$.
\end{center}
\end{theorem}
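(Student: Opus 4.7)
The plan is to deduce Theorem 3.6 directly from Theorem 3.4 combined with Proposition 3.5, by showing that under the bijection $T\mapsto \opname{Fac}T$ the property ``$T$ is sincere'' (respectively, faithful) on the left matches the property ``$\mathcal{T}$ is sincere'' (respectively, faithful) on the right. Since Theorem 3.4 already establishes the $G$-equivariance of the bijection, $G$-stability plays no further role; all that must be checked is a purely module-theoretic compatibility between the invariants on modules and on their associated torsion classes.

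First I would fix $T\in G$-$s\tau$-$\tilt\Lambda$ with $\mathcal{T}=\opname{Fac}T$ and verify the sincerity equivalence. The forward implication is immediate: $T$ itself lies in $\mathcal{T}$, so if every indecomposable projective $P$ satisfies $\Hom_{\Lambda}(P,T)\ne 0$, then a fortiori $\Hom_{\Lambda}(P,\mathcal{T})\ne 0$. For the converse, given an indecomposable projective $P$ and some $M\in\mathcal{T}$ with a nonzero map $f\colon P\to M$, pick a surjection $T^n\twoheadrightarrow M$ coming from $M\in\opname{Fac}T$, and use projectivity of $P$ to lift $f$ to a nonzero map $P\to T^n$; this produces a nonzero map $P\to T$, so $T$ is sincere. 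Combining this equivalence with Proposition 3.5(1), the bijection of Theorem 3.4 restricts to the desired bijection $G\text{-}\tau\text{-}\tilt\Lambda \longleftrightarrow G\text{-}sf\text{-}\tors\Lambda$.

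Next I would treat the faithful case by an elementary annihilator computation. Every $M\in\opname{Fac}T$ is a quotient of some $T^n$, and $\opname{Ann}T = \opname{Ann}T^n \subseteq \opname{Ann}M$; intersecting over all $M\in\mathcal{T}$ yields $\opname{Ann}T\subseteq \opname{Ann}\mathcal{T}$. Conversely, $T\in\mathcal{T}$ gives $\opname{Ann}\mathcal{T}\subseteq\opname{Ann}T$. Hence $\opname{Ann}T=\opname{Ann}\mathcal{T}$, so $T$ is faithful if and only if $\mathcal{T}$ is faithful. Together with Proposition 3.5(2) this yields the second bijection $G\text{-}\tilt\Lambda\longleftrightarrow G\text{-}f\hspace{-2pt}f\text{-}\tors\Lambda$.

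Finally, for the inverse direction $\mathcal{T}\mapsto P(\mathcal{T})$ given by Theorem 3.4, note that $P(\mathcal{T})\in\mathcal{T}$ and $\opname{Fac}P(\mathcal{T})=\mathcal{T}$ since $\Phi$ and its inverse compose to the identity; thus sincerity and faithfulness of $P(\mathcal{T})$ follow from those of $\mathcal{T}$ by the equivalences already established. I do not anticipate a genuine obstacle: the hard equivariant content is absorbed into Theorem 3.4, and what remains is a straightforward lifting argument for sincerity and an annihilator identification for faithfulness, both of which are standard observations essentially contained in [AIR, Proposition 2.2].
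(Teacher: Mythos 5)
Your proposal is correct and follows essentially the same route as the paper: both reduce to Proposition 3.5 via the two equivalences ``$T$ sincere $\Leftrightarrow$ $\opname{Fac}T$ sincere'' (proved by lifting along a surjection $T^n\twoheadrightarrow M$ using projectivity of $P$) and ``$\opname{Ann}T=\opname{Ann}\opname{Fac}T$'' for faithfulness. No substantive differences.
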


\begin{proof} Let $T$ be a $G$-stable support $\tau$-tilting $\Lambda$-module. It follows Proposition 3.5 that
$T$ is a $\tau$-tilting $\Lambda$-module (respectively, tilting $\Lambda$-module) if and only if $T$ is sincere (respectively, faithful).

Claim 1: $T$ is sincere if and only if $\opname{Fac} T$ is sincere.

If $T$ is sincere, it is obvious that $\opname{Fac} T$ is sincere by definition.

Conversely, if $\opname{Fac} T$ is sincere, then for any indecomposable projective module $P$ we have $\Hom_{\Lambda}(P, \opname{Fac} T)\neq 0$,
that is, there exists $M_{P}\in\opname{Fac} T$ such that $\Hom_{\Lambda}(P, M_{P})\neq 0$.
Since $M_{P}\in\opname{Fac} T$,
there exist exact sequences: $$T^n\rightarrow M_{P}\rightarrow 0,\ {\rm and}$$
$$\Hom_{\Lambda}(P, T^n)\rightarrow \Hom_{\Lambda}(P, M_{P})\rightarrow 0,$$ where $n\geq 1$.
So we have $\Hom_{\Lambda}(P, T)\neq 0$ for any indecomposable projective module $P$. Therefore $T$ is sincere.

Claim 2: $T$ is faithful if and only if $\opname{Fac} T$ is faithful.

It suffices to show that $\opname{Ann}T=\opname{Ann}\opname{Fac} T$. It is obvious that $\opname{Ann}\opname{Fac} T\subseteq$ $\opname{Ann}T$
by definition. Conversely, for any $\lambda\in\opname{Ann}T$ and $M\in\opname{Fac} T$, there exists an exact sequence
$T^n\buildrel {f} \over\rightarrow M\rightarrow 0$ with $n\geq 1$ and $\lambda T=0$. Then we have
\begin{center}
$\lambda M=\lambda f(T^n)= f(\lambda T^n)=0$,
\end{center}
that is, $\opname{Ann}T\subseteq$ $\opname{Ann}\opname{Fac} T$.
\end{proof}

We end this subsection with the following result.
\begin{theorem} The bijection of Theorem 2.4 restricts to a bijection:
\begin{center}
$G$-$2$-$\silt\Lambda\longleftrightarrow G$-$s\tau$-$\tilt\Lambda$.
\end{center}
\end{theorem}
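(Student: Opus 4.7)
The plan is to show that both directions of the bijection of Theorem 2.4 preserve $G$-stability; the key ingredient is that the automorphism $^{\sigma}(-)$ on $\mod\Lambda$, extended componentwise to an automorphism of $\opname K^{b}(\proj\Lambda)$, is exact and preserves indecomposability, projectivity, and (as I will argue) minimal projective presentations.

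For the forward map $P^{\bullet}\mapsto H^{0}(P^{\bullet})$, let $P^{\bullet}=(P^{-1}\to P^{0})$ be a basic $G$-stable two-term silting complex, so $^{\sigma}\hspace{-2pt}P^{\bullet}\cong P^{\bullet}$ for every $\sigma\in G$. Since $^{\sigma}(-)$ is exact and acts componentwise on complexes, it commutes with the cohomology functor $H^{0}$, giving
\[
^{\sigma}\hspace{-2pt}H^{0}(P^{\bullet})\cong H^{0}({^{\sigma}\hspace{-2pt}P^{\bullet}})\cong H^{0}(P^{\bullet}).
\]
Hence $H^{0}(P^{\bullet})$ is a $G$-stable support $\tau$-tilting module, and by Proposition 3.2 the accompanying projective summand is automatically $G$-stable as well, so the pair lies in $G$-$s\tau$-$\tilt\Lambda$.

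For the reverse map $(M,P)\mapsto (P_{1}\oplus P\xrightarrow{(f,0)} P_{0})$, start with a pair $(M,P)\in G$-$s\tau$-$\tilt\Lambda$ and let $f:P_{1}\to P_{0}$ be a minimal projective presentation of $M$. Applying $^{\sigma}(-)$ yields $^{\sigma}\hspace{-2pt}f:{^{\sigma}\hspace{-2pt}P_{1}}\to{^{\sigma}\hspace{-2pt}P_{0}}$, which is a projective presentation of $^{\sigma}\hspace{-2pt}M\cong M$. Since $^{\sigma}(-)$ is an exact automorphism preserving indecomposable projectives and surjections, it sends projective covers to projective covers, so $^{\sigma}\hspace{-2pt}f$ is also minimal. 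Uniqueness of minimal projective presentations up to isomorphism then forces $^{\sigma}\hspace{-2pt}f\cong f$ as morphisms, and combining this with $^{\sigma}\hspace{-2pt}P\cong P$ shows that $^{\sigma}\hspace{-2pt}(P_{1}\oplus P\xrightarrow{(f,0)}P_{0})\cong (P_{1}\oplus P\xrightarrow{(f,0)}P_{0})$ in $\opname K^{b}(\proj\Lambda)$, so the complex is $G$-stable.

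The main (and fairly modest) obstacle is the careful verification that $^{\sigma}(-)$ preserves the minimality of projective presentations; once this is in place, the two restricted maps remain mutually inverse by Theorem 2.4, and the restricted bijection $G$-$2$-$\silt\Lambda\longleftrightarrow G$-$s\tau$-$\tilt\Lambda$ follows immediately.
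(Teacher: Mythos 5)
Your proposal is correct and follows essentially the same route as the paper: for one direction you apply $^{\sigma}(-)$ to a minimal projective presentation of $M$ and use that an exact automorphism preserves minimality, and for the other you use that $^{\sigma}(-)$ commutes with $H^{0}$ (taking cokernels). The only cosmetic difference is that you explicitly invoke Proposition 3.2 to handle the projective summand $P$ in the forward direction, which the paper leaves implicit.
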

\begin{proof}  If  $(T,P)\in G$-$s$$\tau$-$\tilt\Lambda$, then $^{\sigma}\hspace{-1pt}T\cong T$ and $^{\sigma}\hspace{-2pt}P\cong P$ for any
$\sigma\in G$. Let $P^{\bullet}\rightarrow T\rightarrow 0$ be a minimal projective presentation of $T$. Then
$^{\sigma}\hspace{-2pt}P^{\bullet}\rightarrow$ $^{\sigma} \hspace{-1pt}T\rightarrow 0$ is a minimal projective presentation of
$^{\sigma}\hspace{-1pt}T$ since $^{\sigma}(-): \mod\Lambda\rightarrow \mod\Lambda$ is an automorphism. Since $^{\sigma}\hspace{-1pt}T\cong T$
for any $\sigma\in G$, it follows that $^{\sigma}\hspace{-2pt}P^{\bullet}\cong P^{\bullet}$ and $^{\sigma}\hspace{-2pt}(P^{\bullet}\oplus P[1])
\cong P^{\bullet}\oplus P[1]$.

Conversely, if $P^{\bullet}\in G$-$2$-$\silt\Lambda$, then $H^{0}(P^{\bullet}) \cong H^{0}(^{\sigma}\hspace{-2pt}P^{\bullet})=
{^{\sigma}\hspace{-2pt} H^{0}(P^{\bullet}})$ because $\sigma$ commutes with taking cokernel. It follows that $H^{0}(P^{\bullet})$ is $G$-stable.
\end{proof}

\vspace{0.2cm}

{\bf 3.3. Connection of $G$-$s$$\tau$-$\tilt\Lambda$ with $G$-$c$-tilt$\mathcal{C}$}

\vspace{0.2cm}

Let $\mathcal{C}$ be a $k$-linear Hom-finite Krull-Schimidt 2-Calabi-Yau triangulated category. An action of $G$ on $\mathcal{C}$ is a group homomorphism
$\theta: G\rightarrow \Aut\mathcal{C}$ from $G$ to the group of triangulated automorphisms of $\mathcal{C}$, that is, $^{\sigma}(-)=\theta(\sigma):
\mathcal{C}\rightarrow \mathcal{C}$ is a triangle automorphism. For any $X\in \mathcal{C}$ and $\sigma\in G$,
$^{\sigma}\hspace{-2pt}X$ denotes the image of $X$ under $^{\sigma}(-)$. An object $X$ in $\mathcal{C}$ is called
{\it $G$-stable} if $^{\sigma}\hspace{-2pt}X \cong X$ for any $\sigma\in G$. We denote by $G$-$c$-$\tilt\mathcal{C}$ the set of isomorphism
classes of basic $G$-stable cluster-tilting objects.

Throughout this subsection, let $T$ be a $G$-stable cluster-tilting object in $\mathcal{C}$ with a fixed isomorphism $\varphi_{\sigma}:
{^{\sigma}T}\rightarrow T$ such that $\varphi_{\sigma\eta}=\varphi_{\sigma}\varphi_{\eta}$ and $\varphi_{1}={\rm id}_{T}$.
Then $\Lambda:=\End_{\mathcal{C}}(T)^{op}$ admits a $G$-action via $\sigma(\lambda)=\varphi_{\sigma}\circ{^{\sigma}\lambda}
\circ\varphi_{\sigma}^{-1}$ for any $\sigma\in G$ and $\lambda\in\Lambda$.
The following proposition plays an important role in this subsection.

\begin{proposition} The action of $G$ on $\mathcal{C}$ induces that on $\mathcal{C}/[T[1]]$.
\end{proposition}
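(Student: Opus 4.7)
The plan is to show that each automorphism $^{\sigma}(-):\mathcal{C}\to\mathcal{C}$ preserves the ideal $[T[1]]$, and therefore descends to an automorphism of the quotient $\mathcal{C}/[T[1]]$; the functoriality in $\sigma$ will then follow at once from the corresponding statements on $\mathcal{C}$.

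First I would fix $\sigma\in G$ and observe that since $^{\sigma}(-)$ is a triangle automorphism of $\mathcal{C}$, it commutes with the shift up to a natural isomorphism: $^{\sigma}(X[1])\cong({^{\sigma}X})[1]$. Combining this with the given isomorphism $\varphi_{\sigma}:{^{\sigma}T}\iso T$, one obtains $^{\sigma}(T[1])\cong T[1]$, and more generally $^{\sigma}(\add T[1])\subseteq\add T[1]$ (and the reverse inclusion follows by applying $^{\sigma^{-1}}(-)$). So $^{\sigma}(-)$ restricts to an automorphism of $\add T[1]$.

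Next I would verify that $^{\sigma}(-)$ sends the ideal $[T[1]]$ to itself. If $f:X\to Y$ factors as $X\xrightarrow{h} T'[1]\xrightarrow{g} Y$ with $T'\in\add T$, then because $^{\sigma}(-)$ is an additive functor we have $^{\sigma}f={^{\sigma}g}\circ{^{\sigma}h}$, which factors through ${^{\sigma}(T'[1])}\cong({^{\sigma}T'})[1]\in\add T[1]$ by the previous step. Therefore $^{\sigma}f\in[T[1]]$, so the induced map on $\Hom$-spaces sends $[T[1]](X,Y)$ into $[T[1]]({^{\sigma}X},{^{\sigma}Y})$. By the universal property of the quotient, $^{\sigma}(-)$ descends to a well-defined additive endofunctor $\overline{^{\sigma}(-)}$ of $\mathcal{C}/[T[1]]$.

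Finally, I would check the action axioms on the quotient. The relations $^{\sigma\eta}(-)={^{\sigma}(-)}\circ{^{\eta}(-)}$ and $^{1}(-)=\mathrm{id}$ already hold on $\mathcal{C}$ (up to the specified natural isomorphisms compatible with $\varphi_{\sigma\eta}=\varphi_{\sigma}\varphi_{\eta}$ and $\varphi_{1}=\mathrm{id}_{T}$), and these relations are preserved under the quotient functor $\mathcal{C}\to\mathcal{C}/[T[1]]$. Consequently each $\overline{^{\sigma}(-)}$ is an automorphism of $\mathcal{C}/[T[1]]$ with inverse $\overline{^{\sigma^{-1}}(-)}$, and $\sigma\mapsto\overline{^{\sigma}(-)}$ is a group homomorphism from $G$ to $\Aut(\mathcal{C}/[T[1]])$. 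I do not expect a serious obstacle here: the only point requiring care is the compatibility of $^{\sigma}(-)$ with the shift, which is part of being a triangle functor, and the use of the isomorphisms $\varphi_{\sigma}$ to identify $^{\sigma}T$ with $T$ so that $\add T$ (and hence $\add T[1]$) is genuinely stable rather than just stable up to isomorphism.
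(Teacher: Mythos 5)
Your proof is correct and follows essentially the same route as the paper: both reduce the statement to showing that $\add T[1]$ is closed under the $G$-action, using the compatibility of the triangle automorphisms $^{\sigma}(-)$ with the shift and the isomorphisms $^{\sigma}T\cong T$. You simply spell out in more detail the routine steps (preservation of the ideal $[T[1]]$ and descent of the action axioms to the quotient) that the paper leaves implicit.
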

\begin{proof} It suffices to show that $\add T[1]$ is closed under the $G$-action. By the definition of
triangle functors we have that the shift functor [1] and the functor $^{\sigma}(-)$ commute on objects. So
\begin{center}
$^{\sigma}\hspace{-2pt}(\add T[1])=\add ^{\sigma}\hspace{-2pt}(T[1]) = \add (^{\sigma}\hspace{-2pt}T)[1] = \add T[1]$,
\end{center}
and the assertion follows.
\end{proof}

By Theorem 2.5, there exists an equivalence of categories between $\mathcal{C}/[T[1]]$ and $\mod\Lambda$. So the action of $G$
on $\mathcal{C}/[T[1]]$ induces an action of $G$ on $\mod\Lambda$. On the other hand, the action of $G$ on $\Lambda$ also
induces that on $\mod\Lambda$. The following result shows that these two actions coincide.

%Similar to Propositions 2.1 and 3.2, we have the following result, which will be used frequently in this subsection.

\begin{lemma} The functor $\Hom_{\mathcal{C}}(T,-): \mathcal{C}\rightarrow \mod\Lambda$ commutes with $G$-action.
\end{lemma}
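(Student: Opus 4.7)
The plan is to exhibit, for each $\sigma\in G$, an explicit natural isomorphism
$$\Phi_{\sigma,X}\colon {}^{\sigma}\Hom_{\mathcal{C}}(T,X) \longrightarrow \Hom_{\mathcal{C}}(T,{}^{\sigma}X), \qquad f\longmapsto {}^{\sigma}f\circ \varphi_{\sigma}^{-1},$$
where $\varphi_{\sigma}^{-1}\colon T\to {}^{\sigma}T$ is the inverse of the fixed trivialization, and then to check that $\Phi_{\sigma,X}$ is a $\Lambda$-module isomorphism, natural in $X$, and compatible with the group law of $G$. That $\Phi_{\sigma,X}$ is a $k$-linear bijection is automatic: the assignment $f\mapsto {}^{\sigma}f$ is a bijection because ${}^{\sigma}(-)$ is an autoequivalence of $\mathcal{C}$, and postcomposition with the isomorphism $\varphi_{\sigma}^{-1}$ is also a bijection.

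Naturality of $\Phi_{\sigma,X}$ in $X$ follows immediately from ${}^{\sigma}(h\circ f)={}^{\sigma}h\circ {}^{\sigma}f$, while compatibility with the group law, namely $\Phi_{\sigma\eta,X}=\Phi_{\sigma,{}^{\eta}X}\circ {}^{\sigma}\Phi_{\eta,X}$ under the identification ${}^{\sigma\eta}(-)={}^{\sigma}(-)\circ {}^{\eta}(-)$, reduces to the cocycle relation $\varphi_{\sigma\eta}=\varphi_{\sigma}\circ {}^{\sigma}\varphi_{\eta}$ together with the normalization $\varphi_1={\rm id}_T$ already built into the setup.

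The main obstacle is the verification of $\Lambda$-linearity. Since $\Lambda=\End_{\mathcal{C}}(T)^{op}$, the left $\Lambda$-action on $\Hom_{\mathcal{C}}(T,X)$ sends $(\lambda,f)$ to $f\circ \lambda$, so the twisted action on ${}^{\sigma}\Hom_{\mathcal{C}}(T,X)$ sends $(\lambda,f)$ to $f\circ \sigma^{-1}(\lambda)$. Expanding $\Phi_{\sigma,X}(\lambda\cdot_{\sigma}f)=\lambda\cdot \Phi_{\sigma,X}(f)$ using functoriality of ${}^{\sigma}(-)$, it suffices to prove the reduced identity
$${}^{\sigma}(\sigma^{-1}(\lambda))\circ \varphi_{\sigma}^{-1}=\varphi_{\sigma}^{-1}\circ \lambda.$$
Unwinding $\sigma^{-1}(\lambda)=\varphi_{\sigma^{-1}}\circ {}^{\sigma^{-1}}\lambda\circ \varphi_{\sigma^{-1}}^{-1}$, applying ${}^{\sigma}(-)$, and using ${}^{\sigma}({}^{\sigma^{-1}}\lambda)=\lambda$ together with the consequence ${}^{\sigma}\varphi_{\sigma^{-1}}=\varphi_{\sigma}^{-1}$ of the cocycle relation (take $\eta=\sigma^{-1}$ and use $\varphi_1={\rm id}_T$), one obtains ${}^{\sigma}(\sigma^{-1}(\lambda))=\varphi_{\sigma}^{-1}\circ \lambda\circ \varphi_{\sigma}$, and the displayed identity then follows. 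The heart of the argument is this bookkeeping tying together the autoequivalence ${}^{\sigma}(-)$, the conjugation-definition of the induced action on $\Lambda$, and the cocycle satisfied by the $\varphi_{\sigma}$.
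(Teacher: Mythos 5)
Your proof is correct and is essentially the paper's own argument: the same map $f\mapsto {}^{\sigma}f\circ\varphi_{\sigma}^{-1}$, with $\Lambda$-linearity reduced to ${}^{\sigma}(\sigma^{-1}(\lambda))=\varphi_{\sigma}^{-1}\circ\lambda\circ\varphi_{\sigma}$ via the conjugation definition of the $G$-action on $\Lambda$ and the cocycle relation for the $\varphi_{\sigma}$. Your extra checks (naturality in $X$, compatibility with the group law, and the type-correct form $\varphi_{\sigma\eta}=\varphi_{\sigma}\circ{}^{\sigma}\varphi_{\eta}$ of the cocycle condition) go slightly beyond what the paper records but do not change the method.
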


\begin{proof}
It suffices to prove that for any $M\in\mathcal{C}$, there exists a $\Lambda$-module isomorphism:
$$^{\sigma}\hspace{-2pt}\Hom_{\mathcal{C}}(T,M) \cong \Hom_{\mathcal{C}}(T, {^{\sigma}\hspace{-2pt}M}).$$
Take $$\Phi: {^{\sigma}\hspace{-2pt}\Hom_{\mathcal{C}}(T,M)} \rightarrow
\Hom_{\mathcal{C}}(T, {^{\sigma}\hspace{-2pt}M}) \ {\rm via}\  g\mapsto {^{\sigma}(g\circ{^{\sigma^{-1}}(\varphi_{\sigma}^{-1})})}=
{^{\sigma}g\circ\varphi_{\sigma}^{-1}}$$ for any $g\in{^{\sigma}\hspace{-2pt}\Hom_{\mathcal{C}}(T,M)}$.
Then $\Phi$ is clearly  an isomorphism as $k$-vector spaces. Because
\begin{align*}
&\ \ \ \ \Phi(\lambda\cdot g)\\ &
=\Phi(\sigma^{-1}(\lambda)\cdot g)\\ &
= \Phi(g\circ{^{\sigma^{-1}}(\varphi_{\sigma}^{-1})}
\circ{^{\sigma^{-1}}\lambda}\circ{^{\sigma^{-1}}\varphi_{\sigma}})\\ &
={^{\sigma}g}\circ{\varphi_{\sigma}^{-1}}\circ\lambda\circ{\varphi_{\sigma}}\circ{\varphi_{\sigma}^{-1}}\\ &
={^{\sigma}g}\circ{\varphi_{\sigma}^{-1}}\circ\lambda \\ &
=\lambda\cdot({^{\sigma}g}\circ{\varphi_{\sigma}^{-1}})\\ &
=\lambda\cdot\Phi(g),
\end{align*}
we have that $\Phi$ is a $\Lambda$-module isomorphism.
\end{proof}

\begin{lemma} If both $X_{1}=X\oplus Y'$ and $X_{2}=X\oplus Y''$ are basic cluster-tilting objects with $Y'$ (respectively, $Y''$) a maximal
direct summand of $X_{1}$ (respectively, $X_{2}$) which belongs to $\add T[1]$ , then $Y'\cong Y''$.
\end{lemma}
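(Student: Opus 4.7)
The plan is to exploit the bijection of Theorem 2.6 between basic cluster-tilting objects in $\mathcal{C}$ and basic support $\tau$-tilting pairs over $\Lambda = \End_{\mathcal{C}}(T)^{op}$, combined with the uniqueness of the projective component of a support $\tau$-tilting pair given by Proposition 2.1(2). The idea is to push both cluster-tilting objects $X_1$ and $X_2$ across this bijection, note that they produce support $\tau$-tilting pairs with identical first components, and then transfer the resulting information back from $\mod\Lambda$ to $\mathcal{C}$.

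Concretely, since $Y'$ and $Y''$ are by hypothesis the maximal direct summands of $X_1$ and $X_2$ respectively lying in $\add T[1]$, the complement $X$ has no direct summand in $\add T[1]$. Theorem 2.6 then attaches to $X_1 = X\oplus Y'$ the basic support $\tau$-tilting pair $(\overline{X},\overline{Y'[-1]})$ and to $X_2 = X\oplus Y''$ the basic support $\tau$-tilting pair $(\overline{X},\overline{Y''[-1]})$. The first components of these two pairs coincide, so Proposition 2.1(2) forces $\add\overline{Y'[-1]}=\add\overline{Y''[-1]}$. Because both pairs are basic, this gives an isomorphism of $\Lambda$-modules $\overline{Y'[-1]}\cong\overline{Y''[-1]}$.

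The remaining step is to lift this isomorphism from $\mod\Lambda$ back to $\mathcal{C}$. Both $Y'[-1]$ and $Y''[-1]$ belong to $\add T$, and on the subcategory $\add T$ the ideal $[T[1]]$ acts trivially, because the cluster-tilting property of $T$ yields $\Hom_{\mathcal{C}}(T,T[1])=0$, hence $\Hom_{\mathcal{C}}(A,T'[1])=0$ for all $A,T'\in\add T$. Therefore the restriction of the equivalence $\Hom_{\mathcal{C}}(T,-):\mathcal{C}/[T[1]]\to\mod\Lambda$ of Theorem 2.5 specializes to an equivalence $\add T \simeq \proj\Lambda$. Applying this equivalence to the isomorphism $\overline{Y'[-1]}\cong\overline{Y''[-1]}$ produces $Y'[-1]\cong Y''[-1]$ in $\mathcal{C}$, and shifting yields $Y'\cong Y''$.

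This argument has no genuine obstacle: once the bijection of Theorem 2.6 is invoked, the conclusion is essentially Proposition 2.1(2) transported across an equivalence. The only point requiring a moment of care is verifying that the functor $\overline{(-)}$ reflects isomorphisms on $\add T$, which is immediate from the vanishing $\Hom_{\mathcal{C}}(T,T[1])=0$. No genuinely new computation is needed beyond recognizing these compatibilities.
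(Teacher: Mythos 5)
Your proof is correct, but it takes a genuinely different route from the paper's. The paper argues entirely inside $\mathcal{C}$: since $Y',Y''\in\add T[1]$ and $X,Y'$ (resp.\ $X,Y''$) are summands of a cluster-tilting object, one gets $\Hom_{\mathcal{C}}(X_2,Y'[1])=0$ and $\Hom_{\mathcal{C}}(X_1,Y''[1])=0$ directly, whence $Y'\in\add X_2$ and $Y''\in\add X_1$ by the defining property $\add X_i=\{C\mid\Hom_{\mathcal{C}}(X_i,C[1])=0\}$; maximality of $Y'$ and $Y''$ then forces $\add Y'=\add Y''$, and basicness gives $Y'\cong Y''$. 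You instead transport both objects through the bijection of Theorem 2.6 to the support $\tau$-tilting pairs $(\overline{X},\overline{Y'[-1]})$ and $(\overline{X},\overline{Y''[-1]})$, apply the uniqueness of the projective component (Proposition 2.1(2)), and reflect the resulting isomorphism back along the equivalence $\add T\simeq\proj\Lambda$. Your argument is sound --- there is no circularity, since Theorem 2.6 is the externally quoted [AIR, Theorem 4.1] rather than something proved later in the paper, and your care about the ideal $[T[1]]$ vanishing on $\add T$ (so that $\overline{(-)}$ reflects isomorphisms there) is exactly the point that needs checking. What the paper's route buys is economy: it uses only the definition of cluster-tilting and needs no appeal to the $\tau$-tilting correspondence, which is in keeping with the lemma's role as a small self-contained ingredient for Proposition 3.12. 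What your route buys is a conceptual explanation: the lemma is precisely the shadow in $\mathcal{C}$ of the fact that a support $\tau$-tilting module determines its projective complement up to equivalence.
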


\begin{proof} Let $X_{1}$, $X_{2}$ and $T$ be cluster-tilting and $Y', Y''\in\add T[1]$. Then we have
$$\Hom_{\mathcal{C}}(X_{2}, Y'[1])=\Hom_{\mathcal{C}}(X, Y'[1])\oplus\Hom_{\mathcal{C}}(Y'', Y'[1])=0,$$
$$\Hom_{\mathcal{C}}(X_{1}, Y''[1])=\Hom_{\mathcal{C}}(X, Y''[1])\oplus\Hom_{\mathcal{C}}(Y', Y''[1])=0.$$
It is easy to get that $Y'\in \add X_{2}$ and $Y''\in \add X_{1}$ from the definition of cluster-tilting objects.
Since $Y'$ (respectively, $Y''$) is a maximal direct summand of $X_{1}$ (respectively, $X_{2}$) which belongs to $\add T[1]$
by assumption, we have $Y'\cong Y''$.
\end{proof}

\begin{lemma}
For $\sigma\in G$, we have $X\in c$-$\tilt\mathcal{C}$ if and only if $^{\sigma}\hspace{-2pt}X\in c$-$\tilt\mathcal{C}$.
\end{lemma}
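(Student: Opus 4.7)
The plan is to exploit the fact that $^{\sigma}(-):\mathcal{C}\rightarrow\mathcal{C}$ is a triangle automorphism, so it preserves all the data entering the definition of a cluster-tilting object. By the symmetry $X={^{\sigma^{-1}}({^{\sigma}X})}$, it suffices to prove one direction: assume $X\in c$-$\tilt\mathcal{C}$ and show $^{\sigma}X\in c$-$\tilt\mathcal{C}$.

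First I would record the three structural facts about $^{\sigma}(-)$ that I need: (i) it is a $k$-linear equivalence, so $\Hom_{\mathcal{C}}(^{\sigma}A,{^{\sigma}B})\cong\Hom_{\mathcal{C}}(A,B)$ naturally in $A,B$; (ii) being a triangle functor, it commutes with the shift, so $^{\sigma}(Y[1])=({^{\sigma}Y})[1]$; (iii) being an equivalence, it sends $\add X$ to $\add{^{\sigma}X}$ and is essentially surjective, so every object $Z\in\mathcal{C}$ can be written as $Z\cong{^{\sigma}Y}$ with $Y={^{\sigma^{-1}}Z}$.

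Next, I would use these facts to verify the defining equality $\add{^{\sigma}X}=\{Z\in\mathcal{C}\mid\Hom_{\mathcal{C}}({^{\sigma}X},Z[1])=0\}$. Given $Z\in\mathcal{C}$, write $Z\cong{^{\sigma}Y}$; then
\[
\Hom_{\mathcal{C}}({^{\sigma}X},Z[1])\cong\Hom_{\mathcal{C}}({^{\sigma}X},{^{\sigma}(Y[1])})\cong\Hom_{\mathcal{C}}(X,Y[1]).
\]
Since $X$ is cluster-tilting, the right-hand side vanishes if and only if $Y\in\add X$, and by (iii) this is equivalent to $Z\in\add{^{\sigma}X}$. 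This establishes the required equality and hence shows $^{\sigma}X$ is cluster-tilting. Applying the same argument with $\sigma$ replaced by $\sigma^{-1}$ and $X$ replaced by $^{\sigma}X$ gives the converse.

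There is no real obstacle here; the statement is formal once one observes that $^{\sigma}(-)$ is a triangle autoequivalence. The only mild point to check carefully is the compatibility of $^{\sigma}(-)$ with the shift, which is part of the datum of a triangle functor and is already recorded in the proof of Proposition~3.8.
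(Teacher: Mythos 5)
Your proposal is correct and follows essentially the same route as the paper: both verify the defining equality $\add{^{\sigma}X}=\{Z\mid\Hom_{\mathcal{C}}({^{\sigma}X},Z[1])=0\}$ using that $^{\sigma}(-)$ is a triangle autoequivalence (hence preserves $\Hom$-spaces, commutes with the shift, and carries $\add X$ onto $\add{^{\sigma}X}$), and both obtain the converse by applying the argument to $\sigma^{-1}$. Your write-up is just a slightly more explicit, element-by-element version of the paper's chain of set equalities.
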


\begin{proof}
Let $X\in c$-$\tilt\mathcal{C}$. Then $\add X=\{C\in\mathcal{C}\mid \Hom_{\mathcal{C}}(X,C[1])=0\}$. So we have
\begin{align*}
&\ \ \ \ \add {^{\sigma}X}\\ &
={^{\sigma}\add X}\\ &
=\{{^{\sigma}C}\in\mathcal{C}\mid \Hom_{\mathcal{C}}(X,C[1])=0\}\\ &
=
\{{^{\sigma}C}\in\mathcal{C}\mid \Hom_{\mathcal{C}}({^{\sigma}X},{^{\sigma}C}[1])=0\}\\ &
=\{C\in\mathcal{C}\mid \Hom_{\mathcal{C}}({^{\sigma}X},C[1])=0\},
\end{align*}
and hence $^{\sigma}\hspace{-2pt}X\in c$-$\tilt\mathcal{C}$.
Dually, we get that $^{\sigma}\hspace{-2pt}X\in c$-$\tilt\mathcal{C}$ implies $X\in c$-$\tilt\mathcal{C}$.
\end{proof}

The following observation is useful.

\begin{proposition}
If $X$=$X'\oplus X''\in$ $c$-$\tilt\mathcal{C}$ with $X''$ a maximal direct summand of $X$ which belongs to $\add T[1]$,
then $X$ is $G$-stable if and only if $X'$ is $G$-stable.
\end{proposition}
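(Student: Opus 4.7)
The plan is to exploit Proposition 3.8, which ensures that $\add T[1]$ is stable under the $G$-action, together with Lemma 3.10 (uniqueness of the maximal $\add T[1]$-summand when the complement is fixed) and Lemma 3.11 ($G$-invariance of the cluster-tilting property). The basic strategy is that $^{\sigma}(-)$ preserves the decomposition $X = X' \oplus X''$ at the level of ``$\add T[1]$-part plus complement'', so compatibility of the two pieces can be argued separately and the two implications follow symmetrically.

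For the forward implication, I would assume $X$ is $G$-stable and first show that ${^{\sigma}X''}\in\add T[1]$ (immediate from Proposition 3.8), while ${^{\sigma}X'}$ has no nonzero indecomposable summand in $\add T[1]$: if it did, applying $^{\sigma^{-1}}(-)$ would yield such a summand of $X'$, contradicting the maximality of $X''$ inside $X$. Hence in ${^{\sigma}X} = {^{\sigma}X'}\oplus {^{\sigma}X''}$ the piece ${^{\sigma}X''}$ is again a maximal $\add T[1]$-summand. Since ${^{\sigma}X}\cong X$ in the Krull--Schmidt category $\mathcal{C}$, the indecomposable summands of the two sides that lie in $\add T[1]$ must match up to isomorphism, giving $X''\cong {^{\sigma}X''}$; cancellation then yields $X'\cong {^{\sigma}X'}$.

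For the converse, suppose $X'$ is $G$-stable. Then ${^{\sigma}X} \cong X' \oplus {^{\sigma}X''}$, and by Lemma 3.11 this is a basic cluster-tilting object. The same maximality argument as above shows that ${^{\sigma}X''}$ is a maximal $\add T[1]$-summand of $X' \oplus {^{\sigma}X''}$. Applying Lemma 3.10 to the two basic cluster-tilting objects $X = X' \oplus X''$ and $X' \oplus {^{\sigma}X''}$, which share the common part $X'$ and have maximal $\add T[1]$-summands $X''$ and ${^{\sigma}X''}$ respectively, yields $X''\cong {^{\sigma}X''}$; hence ${^{\sigma}X}\cong X$.

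The step I expect to require the most care is the maximality transfer --- namely, verifying in both directions that $^{\sigma}(-)$ carries a maximal $\add T[1]$-summand to a maximal $\add T[1]$-summand. This is what allows Lemma 3.10 (and Krull--Schmidt cancellation on the non-$\add T[1]$ part) to be invoked cleanly; once it is in place, the rest of the argument is a direct appeal to the already-established machinery.
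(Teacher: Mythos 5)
Your proposal is correct and follows essentially the same route as the paper: Proposition 3.8 to transfer maximality of the $\add T[1]$-summand along $^{\sigma}(-)$, Krull--Schmidt matching plus cancellation for the forward direction, and Lemma 3.11 together with Lemma 3.10 (applied to $X'\oplus X''$ and $X'\oplus{^{\sigma}X''}$) for the converse. Your write-up merely makes the Krull--Schmidt matching step more explicit than the paper does.
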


\begin{proof}
By Proposition 3.8, we have
$$^{\sigma}\hspace{-2pt}\add T[1] = \add {^{\sigma}\hspace{-2pt}(T[1])}=\add(^{\sigma}T)[1] = \add T[1].$$
So $Y \in \add T[1]$ if and only if $^{\sigma}\hspace{-1pt}Y \in \add T[1]$.
Since $X''$ is a maximal direct summand of $X$ which belongs to $\add T[1]$ by assumption,
$^{\sigma}\hspace{-2pt}X''$ is also a maximal direct summand of $^{\sigma}\hspace{-2pt}X\in \add T[1]$.

If $X$ is $G$-stable, then $^{\sigma}\hspace{-2pt}X\cong X$ for any $\sigma\in G$. Since $^{\sigma}\hspace{-2pt}X''$ (respectively, $X''$)
is a maximal direct summand of $^{\sigma}\hspace{-2pt}X$ (respectively, $X$) which belongs to $\add T[1]$, we have $^{\sigma}\hspace{-2pt}X''
\cong X''$ and $^{\sigma}\hspace{-2pt}X' \cong X'$ for any $\sigma\in G$.

By Lemma 3.11 we have $^{\sigma}\hspace{-2pt}X'\oplus$ $^{\sigma}\hspace{-2pt}X''\in$ c-tilt$\mathcal{C}$.
If $X'$ is $G$-stable, then $X'\oplus$ $^{\sigma}\hspace{-2pt}X''$ and $X'\oplus X''$ are basic cluster-tilting objects.
By Lemma 3.10 we have $^{\sigma}\hspace{-2pt}X'' \cong X''$. Thus $X$ is $G$-stable.
\end{proof}
Now we are in a position to prove the following

\begin{theorem}
The bijection of Theorem 2.6 restricts to a bijection:
\begin{center}
$G$-$c$-$\tilt\mathcal{C} \longleftrightarrow G$-$s$$\tau$-$\tilt\Lambda$.
\end{center}
\end{theorem}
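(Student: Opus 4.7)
The plan is to verify that the bijection of Theorem 2.6 preserves $G$-stability in both directions, using the functorial compatibility established in Lemma 3.9 together with the characterization of $G$-stability for cluster-tilting objects from Proposition 3.12. Throughout, write $X=X'\oplus X''\in c$-$\tilt\cc$ with $X''$ the maximal summand in $\add T[1]$, so that $\widetilde{X}=(\overline{X'},\overline{X''[-1]})$.

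For the forward direction, suppose $X\in G$-$c$-$\tilt\cc$. By Proposition 3.12, $X'$ is $G$-stable, and from its proof we also have ${^\sigma X''}\cong X''$ for every $\sigma\in G$ (since $^\sigma X''$ is the maximal $\add T[1]$-summand of $^\sigma X\cong X$). Because the $G$-action consists of triangle automorphisms, $^\sigma(X''[-1])\cong({^\sigma X''})[-1]\cong X''[-1]$. Applying Lemma 3.9 then yields
$$^\sigma\overline{X'}\cong\overline{{^\sigma X'}}\cong\overline{X'},\qquad {^\sigma\overline{X''[-1]}}\cong\overline{{^\sigma(X''[-1])}}\cong\overline{X''[-1]},$$
so $\widetilde{X}\in G$-$s\tau$-$\tilt\Lambda$. (Alternatively, once $\overline{X'}$ is shown $G$-stable, Proposition 3.2 forces the projective component to be $G$-stable as well.)

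For the backward direction, suppose $\widetilde{X}=(\overline{X'},\overline{X''[-1]})\in G$-$s\tau$-$\tilt\Lambda$; in particular $^\sigma\overline{X'}\cong\overline{X'}$ for each $\sigma\in G$. By Lemma 3.9, $\overline{{^\sigma X'}}\cong{^\sigma\overline{X'}}\cong\overline{X'}$ in $\mod\Lambda$, and since $\overline{(-)}\colon\cc/[T[1]]\to\mod\Lambda$ is an equivalence of categories (Theorem 2.5), this gives ${^\sigma X'}\cong X'$ in $\cc/[T[1]]$. The key point is to promote this to an isomorphism in $\cc$. Here we use that $\add T[1]$ is $G$-stable (Proposition 3.8), so ${^\sigma X'}$ has no direct summand in $\add T[1]$, exactly as $X'$ does not. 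Via Krull-Schmidt, an object of $\cc$ becomes zero in $\cc/[T[1]]$ iff it lies in $\add T[1]$, so two objects without $\add T[1]$-summands are isomorphic in $\cc$ iff they are isomorphic in $\cc/[T[1]]$. Therefore ${^\sigma X'}\cong X'$ in $\cc$, i.e.\ $X'$ is $G$-stable, and Proposition 3.12 yields that $X$ is $G$-stable; by Lemma 3.11 it remains cluster-tilting, so $X\in G$-$c$-$\tilt\cc$.

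The routine verifications are the two chains of isomorphisms above, which follow immediately from Lemmas 3.9 and 3.11 and Proposition 3.8. The one nontrivial step, which I anticipate as the main obstacle, is the Krull-Schmidt lifting argument in the backward direction: making precise that for objects without $\add T[1]$-summands, the localization functor $\cc\to\cc/[T[1]]$ reflects isomorphisms. Once this is handled cleanly, Proposition 3.12 closes the argument.
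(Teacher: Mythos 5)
Your proof is correct and follows essentially the same route as the paper's: Proposition 3.12 reduces $G$-stability of $X$ to that of $X'$, and Lemma 3.9 together with the equivalence of Theorem 2.5 transports this to $G$-stability of $\overline{X'}$ and back. The only difference is that you make explicit the Krull--Schmidt argument lifting an isomorphism ${}^{\sigma}X'\cong X'$ from $\mathcal{C}/[T[1]]$ to $\mathcal{C}$ (using that neither object has a summand in $\add T[1]$), a step the paper's proof leaves implicit when it invokes Theorem 2.5.
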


\begin{proof}  By Lemma 3.9, we have $^{\sigma}\hspace{-2pt}\Hom_{\mathcal{C}}(T,M) \cong \Hom_{\mathcal{C}}$($T$, $^{\sigma}\hspace{-2pt}M$) for any $M\in \mathcal{C}/[T[1]]$.

If $X\in G$-c-$\tilt\mathcal{C}$, then $X'$ is $G$-stable by Proposition 3.12. For any $\sigma\in G$, we have
$$^{\sigma}\overline{X'}=
{^{\sigma}\hspace{-2pt}\Hom_{\mathcal{C}}(T,X')} \cong \Hom_{\mathcal{C}}(T, {^{\sigma}\hspace{-2pt}X'})\cong \Hom_{\mathcal{C}}(T,X')
= \overline{X'}.$$ Thus $\overline{X'}$ is $G$-stable. Moreover, $\widetilde{X}\in G$-s$\tau$-$\tilt\Lambda$ is $G$-stable.

Conversely, if $\overline{X'}$ is $G$-stable, then we have $\Hom_{\mathcal{C}}$($T$, $^{\sigma}\hspace{-2pt}X'$) $\cong \Hom_{\mathcal{C}}(T,X')$ as before.
Then by Theorem 2.5, we have $^{\sigma}\hspace{-2pt}X' \cong X'$ for any $\sigma\in G$, that is, $X'$ is $G$-stable. Thus $X\in G$-$c$-$\tilt\mathcal{C}$
by Proposition 3.12.
\end{proof}

In the following we establish a bijection between $G$-stable cluster-tilting objects in $\mathcal{C}$ and $G$-stable
two-term silting complexes in $\opname{K}^{b}(\proj\Lambda)$.

\begin{lemma}
Let $X$ be a basic object of $\mathcal{C}$ and take a triangle
$$T_{1}\buildrel {g} \over\rightarrow T_{0}\buildrel {f} \over\rightarrow X\rightarrow T_{1}[1]$$
with $T_{1}, T_{0}\in \add T$ and $f$ a minimal right $\add T$-approximation. Then the following statements are equivalent.
\begin{itemize}
\item[(1)] $X$ is $G$-stable in $\mathcal{C}$.
\item[(2)] $\overline{T_{1}}\buildrel {\overline{g}} \over\rightarrow\overline{T_{0}}$ is $G$-stable in $\opname K^{b}(\proj\Lambda)$.
\end{itemize}
\end{lemma}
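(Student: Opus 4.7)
My plan is to prove both implications by transporting the approximation triangle through the $G$-action, using $\overline{(-)}=\Hom_{\mathcal{C}}(T,-)$ and Lemma 3.9 as the bridge between $\mathcal{C}$ and $\opname K^{b}(\proj\Lambda)$. Two structural facts drive the argument: first, ${}^{\sigma}(-)$ is a triangulated auto-equivalence of $\mathcal{C}$ that preserves $\add T$ (because $T$ is $G$-stable, so $\add {}^{\sigma}T=\add T$); second, $\overline{(-)}$ restricts to an equivalence $\add T\xrightarrow{\sim}\proj\Lambda$.

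For (1) $\Rightarrow$ (2), I apply ${}^{\sigma}(-)$ to the given triangle to obtain
$${}^{\sigma}T_{1}\xrightarrow{{}^{\sigma}g}{}^{\sigma}T_{0}\xrightarrow{{}^{\sigma}f}{}^{\sigma}X\to({}^{\sigma}T_{1})[1],$$
with ${}^{\sigma}T_{i}\in\add T$ and ${}^{\sigma}f$ still a minimal right $\add T$-approximation of ${}^{\sigma}X$ (both properties follow because ${}^{\sigma}(-)$ is an auto-equivalence preserving $\add T$). Under the assumption ${}^{\sigma}X\cong X$, the essential uniqueness of such a minimal approximation triangle yields an isomorphism of arrows $(g:T_{1}\to T_{0})\cong({}^{\sigma}g:{}^{\sigma}T_{1}\to{}^{\sigma}T_{0})$ in $\add T$. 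Applying $\overline{(-)}$ and invoking Lemma 3.9 to identify $\overline{{}^{\sigma}T_{i}}$ with ${}^{\sigma}\overline{T_{i}}$ then produces the desired isomorphism $({}^{\sigma}\overline{T_{1}}\to{}^{\sigma}\overline{T_{0}})\cong(\overline{T_{1}}\to\overline{T_{0}})$ in $\opname K^{b}(\proj\Lambda)$.

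For (2) $\Rightarrow$ (1), I reverse the argument: the assumed $G$-stability and Lemma 3.9 give an isomorphism $(\overline{{}^{\sigma}T_{1}}\to\overline{{}^{\sigma}T_{0}})\cong(\overline{T_{1}}\to\overline{T_{0}})$ in $\opname K^{b}(\proj\Lambda)$. Because $\overline{(-)}|_{\add T}$ is an equivalence onto $\proj\Lambda$, and because the two-term complex attached to a minimal right $\add T$-approximation carries no contractible direct summand, this $\opname K^{b}$-level isomorphism can be upgraded to a chain isomorphism and then lifted to an isomorphism of arrows $({}^{\sigma}g:{}^{\sigma}T_{1}\to{}^{\sigma}T_{0})\cong(g:T_{1}\to T_{0})$ in $\mathcal{C}$. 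Taking cones in the ambient triangulated category then delivers ${}^{\sigma}X\cong X$. The hard part will be precisely this final lifting step: one must exploit the minimality of $f$ carefully to ensure that null-homotopic perturbations in $\opname K^{b}(\proj\Lambda)$ cannot obstruct the passage to a genuine chain isomorphism — essentially the same mechanism that makes the correspondence of Theorem 2.7 a well-defined bijection, now applied to a basic object not assumed to be cluster-tilting.
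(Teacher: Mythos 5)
Your proof follows essentially the same route as the paper's: apply the triangulated autoequivalence ${}^{\sigma}(-)$ to the approximation triangle, use minimality and essential uniqueness of the right $\add T$-approximation for (1) $\Rightarrow$ (2), and pass between the arrow $g$ in $\add T$ and the two-term complex $\overline{T_{1}}\to\overline{T_{0}}$ via $\Hom_{\mathcal{C}}(T,-)$ and Lemma 3.9. Your (2) $\Rightarrow$ (1) is in fact slightly more careful than the paper's one-line version, since you note explicitly that minimality excludes contractible summands, so the isomorphism in $\opname{K}^{b}(\proj\Lambda)$ lifts to an isomorphism of arrows in $\add T$ before taking cones.
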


\begin{proof}
Since $^{\sigma}(-): \mathcal{C} \rightarrow \mathcal{C}$ is a triangulated equivalence. So we have the following diagram:
$$\xymatrix{
& T_{1}\ar[r]^{g}\ar[d] & T_{0}\ar[r]^{f}\ar[d] & X\ar[r]\ar[d] & T_{1}[1]\ar[d]&\\
& ^{\sigma}\hspace{-2pt}T_{1}\ar[r]^{^{\sigma}\hspace{-2pt}g} & ^{\sigma}\hspace{-2pt}T_{0}\ar[r]^{^{\sigma}\hspace{-2pt}f}
& ^{\sigma}\hspace{-2pt}X\ar[r] & (^{\sigma}\hspace{-2pt}T_{1})[1].&}$$
By the proof of Theorem 3.13, we have that $T_{1}\buildrel {g} \over\rightarrow T_{0}$ is $G$-stable if and only if
$\overline{T_{1}}\buildrel {\overline{g}} \over\rightarrow\overline{T_{0}}$ is $G$-stable.

\vspace{0.2cm}

$(2)\Rightarrow (1)$ We already have $T_{1}\buildrel {g} \over\rightarrow T_{0}$ is $G$-stable, it follows that $X$ is $G$-stable.

$(1)\Rightarrow (2)$ If $X$ is $G$-stable, then $^{\sigma}\hspace{-2pt}X \cong X$. Since $f$ is a minimal right $\add T$-approximation by assumption,
$^{\sigma}\hspace{-2pt}f$ is a minimal right $^{\sigma}\hspace{-2pt}\add T$-approximation and hence a minimal right $\add T$-approximation. Thus
$^{\sigma}\hspace{-1pt}T_{0} \cong T_{0}$ and $^{\sigma}\hspace{-1pt}T_{1} \cong T_{1}$. Therefore $T_{1}\buildrel {g} \over\rightarrow T_{0}$
is $G$-stable and the assertion follows.
\end{proof}

Immediately, we get the following

\begin{theorem}
The bijection of Theorem 2.7 restricts to a bijection:
\begin{center}
$G$-$c$-$\tilt\mathcal{C} \longleftrightarrow G$-$2$-$\silt\Lambda$.
\end{center}
\end{theorem}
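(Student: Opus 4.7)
The plan is to combine the underlying bijection of Theorem 2.7 with the $G$-compatibility already established in Lemma 3.14. Recall that Theorem 2.7 produces a bijection $c$-$\tilt\mathcal{C}\longleftrightarrow 2$-$\silt\Lambda$ sending a basic cluster-tilting object $X$ to the two-term complex $\overline{T_1}\buildrel{\overline{g}}\over\rightarrow \overline{T_0}$ assembled from a triangle
$$T_1\buildrel{g}\over\rightarrow T_0\buildrel{f}\over\rightarrow X\rightarrow T_1[1]$$
in which $f$ is a minimal right $\add T$-approximation. To obtain the claimed bijection, it suffices to show that this correspondence restricts to the $G$-stable loci on both sides.

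This is exactly the content of Lemma 3.14: for the triangle above, $X$ is $G$-stable in $\mathcal{C}$ if and only if $\overline{T_1}\rightarrow\overline{T_0}$ is $G$-stable in $\opname{K}^b(\proj\Lambda)$. Applied to every basic cluster-tilting object $X$, this equivalence shows simultaneously that the map of Theorem 2.7 sends $G$-$c$-$\tilt\mathcal{C}$ into $G$-$2$-$\silt\Lambda$ and that its inverse sends $G$-$2$-$\silt\Lambda$ back into $G$-$c$-$\tilt\mathcal{C}$. Consequently the restriction is itself a bijection, establishing the theorem.

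There is no real obstacle: the essential compatibility between the $G$-action, the triangle $T_1\to T_0\to X\to T_1[1]$, and the equivalence $\overline{(-)}=\Hom_{\mathcal{C}}(T,-)$ has already been carried out in Lemma 3.14, using that $^{\sigma}(-)\colon\mathcal{C}\to\mathcal{C}$ is a triangulated autoequivalence, that $\add T$ is preserved by $^{\sigma}(-)$ because $T$ is $G$-stable, and that $^{\sigma}(-)$ sends minimal right $\add T$-approximations to minimal right $\add T$-approximations. Thus the statement drops out as an immediate corollary of Theorem 2.7 and Lemma 3.14.
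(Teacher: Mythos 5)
Your proposal is correct and follows exactly the paper's own argument: the paper likewise deduces the theorem directly from Theorem 2.7 together with Lemma 3.14, which gives the needed equivalence of $G$-stability on both sides of the correspondence. Your write-up simply spells out the same two-line deduction in more detail.
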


\begin{proof}
It follows from Theorem 2.7 and Lemma 3.14.
\end{proof}

\section{relationship between stable support $\tau$-tilting modules over $\Lambda$ and $\Lambda G$}

\vspace{0.2cm}

Throughout this section, $\Lambda$ is an algebra and $G$ is a finite group acting on $\Lambda$ such that
$|G|$ is invertible in $\Lambda$. We denote by $\mathbb{X}$ the group of characters on $G$, that is, the group homomorphisms $\chi: G\rightarrow k^{\ast}=k\backslash\{0\}$.
Then $X$ acts on $\Lambda G$ via $\chi(\lambda g)=\chi(g)\lambda g$.
We prove that there exists an injection from $G$-stable support $\tau$-tilting $\Lambda$-modules to $\mathbb{X}$-stable support $\tau$-tilting $\Lambda G$-modules.
In the case when $G$ is a solvable group, the injection turns out to be a bijection.

We begin with the following easy observation.

\begin{lemma}
If $T\in \mod\Lambda$ is $G$-stable, then $FT$ is $\mathbb{X}$-stable.
\end{lemma}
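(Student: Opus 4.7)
The plan is to produce, for each character $\chi \in \mathbb{X}$, an explicit $\Lambda G$-module isomorphism $\Phi_\chi \colon FT \to {}^{\chi}(FT)$, defined on simple tensors by
\[
\Phi_\chi(g \otimes t) := \chi(g)^{-1}(g \otimes t) \qquad (g \in G,\ t \in T)
\]
and extended $k$-linearly. Because each $\chi(g) \in k^{\ast}$, the map is automatically a $k$-linear bijection, so the only substantive points are well-definedness over the tensor and $\Lambda G$-linearity into the twist.

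First I would check that $\Phi_\chi$ descends to $\Lambda G \otimes_\Lambda T$. Using the structural identity $g\mu = g(\mu)\, g$ in $\Lambda G$ for $\mu \in \Lambda$,
\[
\Phi_\chi(g\mu \otimes t) = \chi(g)^{-1}(g(\mu)g \otimes t) = \chi(g)^{-1}(g \otimes \mu t) = \Phi_\chi(g \otimes \mu t),
\]
so $\Phi_\chi$ is indeed well-defined on $FT$.

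Next I would verify $\Lambda G$-linearity. Since $\chi(\lambda h) = \chi(h)\lambda h$ (hence $\chi$ is trivial on the $\Lambda$-component, as $\chi(\lambda) = \chi(\lambda \cdot 1) = \lambda$), the twisted action on ${}^\chi(FT)$ is $(\lambda h)\cdot_{{}^\chi(FT)} m = \chi(h)^{-1}(\lambda h)\, m$. On the one hand
\[
\Phi_\chi\bigl((\lambda h)(g \otimes t)\bigr) = \chi(hg)^{-1}(\lambda h g \otimes t) = \chi(h)^{-1}\chi(g)^{-1}(\lambda h g \otimes t),
\]
and on the other hand
\[
(\lambda h)\cdot_{{}^\chi(FT)} \Phi_\chi(g \otimes t) = \chi(h)^{-1}(\lambda h)\cdot \chi(g)^{-1}(g \otimes t) = \chi(h)^{-1}\chi(g)^{-1}(\lambda h g \otimes t),
\]
where the two expressions match by multiplicativity of $\chi$. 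Hence $\Phi_\chi$ is a $\Lambda G$-isomorphism $FT \xrightarrow{\sim} {}^{\chi}(FT)$, and since this is available for every $\chi \in \mathbb{X}$, the module $FT$ is $\mathbb{X}$-stable.

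The main obstacle is purely notational: one must keep track of the fact that $\chi$ rescales only the group coordinate of an element of $\Lambda G$, so the compensating scalar $\chi(g)^{-1}$ attached to $g \otimes t$ comes solely from the group label $g$ and not from any $\Lambda$-factor. Once this bookkeeping is in place, the multiplicativity $\chi(hg) = \chi(h)\chi(g)$ supplies the required cancellation and no further input is needed; the $G$-stability of $T$ is carried as the natural hypothesis because $FT$ will subsequently be packaged with a $G$-equivariant structure inherited from $T$.
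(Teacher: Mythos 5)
Your proof is correct and follows essentially the same route as the paper: both exhibit the explicit isomorphism between $FT$ and ${}^{\chi}(FT)$ that rescales the summand $g\otimes T$ by $\chi(g)^{\pm 1}$ and verify $\Lambda G$-linearity using multiplicativity of $\chi$. (As you note, and as is equally true of the paper's argument, the $G$-stability of $T$ is never used; $FM$ is $\mathbb{X}$-stable for every $M\in\mod\Lambda$.)
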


\begin{proof}
We only need to prove $^{\chi}FT \cong FT$ for any $\chi\in X$. Note that $^{\chi}FT$ is a $\Lambda G$-module whose underlying set and the
additive structure is the same as $FT$, in which $(\lambda',g')\circ((\lambda,g)\otimes t)$ is defined to be $\chi(g')(\lambda',g')(\lambda,g)\otimes t$.
Define $\theta:$ $^{\chi}FT\rightarrow FT$ via $(\lambda,g)\otimes t\mapsto \chi^{-1}(g)(\lambda,g)\otimes t$. Clearly it is a bijection. Because
\begin{eqnarray*}
& &\theta((\lambda',g')\circ(\lambda,g)\otimes t)\\
&=&\theta(\chi(g')(\lambda',g')(\lambda,g)\otimes t)\\
&=&\chi(g')\theta((\lambda',g')(\lambda,g)\otimes t)\\ &=&\chi(g')\theta((\lambda'g'(\lambda),g'g\otimes t)\\
&=&\chi(g')\chi^{-1}(g'g)(\lambda',g')(\lambda,g)\otimes t\\ &=&\chi^{-1}(g)(\lambda',g')(\lambda,g)\otimes t\\
&=&(\lambda',g')\theta((\lambda,g)\otimes t),
\end{eqnarray*}
we have that $\theta$ is a $\Lambda G$-homomorphism, and hence an isomorphism.
\end{proof}

The first main result in this section is the following.

\begin{theorem} The functor $$F=\Lambda G\otimes_{\Lambda}-: \mod\Lambda \rightarrow \mod\Lambda G$$ via $T \mapsto FT$ induces the following injections:
\begin{itemize}
\item[(1)] from the set of isomorphism classes of $G$-stable $\tau$-rigid $\Lambda$-modules to the set of isomorphism classes of
$\mathbb{X}$-stable $\tau$-rigid $\Lambda G$-modules.
\item[(2)] from the set of isomorphism classes of $G$-stable $\tau$-rigid pair in $\mod\Lambda$ to
the set of isomorphism classes of $\mathbb{X}$-stable $\tau$-rigid pair in $\mod\Lambda G$.
\item[(3)] $G$-s$\tau$-$\tilt\Lambda\rightarrow \mathbb{X}$-$s\tau$-$\tilt\Lambda G$.
\end{itemize}
\end{theorem}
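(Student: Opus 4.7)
The three clauses are verified in order, each refining the previous one. Throughout I will use the exactness and projective-preservation of $F$ (Lemma~2.8(1)), the decomposition $HFM\cong\bigoplus_{\sigma\in G}{}^\sigma\hspace{-2pt}M$ (Lemma~2.8(2)) together with the hypothesis that $T$ is $G$-stable to force $HFT\cong T^{|G|}$, and Lemma~4.1 for $\mathbb{X}$-stability.

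For (1), given a $G$-stable $\tau$-rigid $T\in\mod\Lambda$, take a minimal projective presentation $P_1\to P_0\to T\to 0$. Applying $F$ yields a projective presentation $FP_1\to FP_0\to FT\to 0$ in $\mod\Lambda G$. Using the standard characterization of $\tau$-rigidity via projective presentations, $FT$ is $\tau$-rigid iff $\Hom_{\Lambda G}(FP_0,FT)\to\Hom_{\Lambda G}(FP_1,FT)$ is surjective. The adjunction $(F,H)$ converts this to the map $\Hom_\Lambda(P_0,HFT)\to\Hom_\Lambda(P_1,HFT)$, which by Lemma~2.8(2) and $G$-stability is $|G|$ copies of $\Hom_\Lambda(P_0,T)\to\Hom_\Lambda(P_1,T)$; the latter is surjective because $T$ is $\tau$-rigid. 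An analogous adjunction computation gives $\Hom_{\Lambda G}(FP,FT)\cong\Hom_\Lambda(P,T)^{|G|}=0$ for the projective part in (2), and $FP$ remains projective by Lemma~2.8(1).

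The interesting content is (3). If $T$ is a $G$-stable support $\tau$-tilting module, Proposition~2.2 yields an exact sequence $\Lambda\xrightarrow{f} M'\to M''\to 0$ in $\mod\Lambda$ with $M',M''\in\add T$ and $f$ a left $\add T$-approximation. Applying $F$ gives $\Lambda G\xrightarrow{Ff} FM'\to FM''\to 0$ in $\mod\Lambda G$ with $FM',FM''\in\add FT$. To invoke Proposition~2.2 in $\mod\Lambda G$, I must verify that $Ff$ is a left $\add FT$-approximation. Given $h\colon\Lambda G\to N$ with $N\in\add FT$, pass across the adjunction to $\tilde h\colon\Lambda\to HN$; since $HN\in\add HFT=\add(T^{|G|})=\add T$ (here $G$-stability is essential), $\tilde h$ factors as $\tilde h'\circ f$ for some $\tilde h'\colon M'\to HN$. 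Naturality of the unit of $(F,H)$ then forces the $\Lambda G$-adjoint $h'\colon FM'\to N$ of $\tilde h'$ to satisfy $h=h'\circ Ff$. Thus $FT$ is support $\tau$-tilting by Proposition~2.2.

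For injectivity, if $FT_1\cong FT_2$ with $T_1,T_2$ basic and $G$-stable, then $T_1^{|G|}\cong HFT_1\cong HFT_2\cong T_2^{|G|}$ by Lemma~2.8(2), so Krull--Schmidt forces $T_1\cong T_2$; the same argument handles the projective component for pairs in (2). The one genuinely nontrivial step is the approximation verification in (3), which hinges essentially on the identity $\add HFT=\add T$ afforded by $G$-stability together with naturality of the adjunction; everything else reduces to bookkeeping with $F$, $H$, and their adjunction.
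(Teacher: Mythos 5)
Your proof is correct, and its overall architecture coincides with the paper's: injectivity via $HFT\cong T^{|G|}$ and Krull--Schmidt, part (2) by the adjunction $(F,H)$, and part (3) by reducing to the criterion of Proposition 2.2. You diverge at two sub-steps, both legitimately. For (1), the paper invokes the fact that $F$ commutes with $\tau$ (cited from [RR]) and computes $\Hom_{\Lambda G}(FT,\tau FT)$ directly, whereas you use the characterization of $\tau$-rigidity through projective presentations ([AIR, Prop.~2.4], not stated in the paper's preliminaries); this trades one external input for another. Note that for the possibly non-minimal presentation $FP_1\to FP_0\to FT\to 0$ only the implication ``surjectivity of $\Hom(FP_0,FT)\to\Hom(FP_1,FT)$ implies $\tau$-rigidity'' is available, so your ``iff'' is slightly overstated --- but that is exactly the direction you use, so nothing breaks. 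For the approximation property in (3), the paper argues in two steps: an explicit commutative diagram showing $\Hom_{\Lambda G}(Ff,FT)$ is surjective, followed by a retraction argument extending surjectivity to every $M\in\add FT$. Your single-step argument --- transport $h\colon\Lambda G\to N$ across the adjunction, factor the adjoint through $f$ using $HN\in\add HFT=\add T$, and transport back by naturality of the unit/counit --- is cleaner and subsumes both steps at once; the identity $h'\circ Ff=\epsilon_N\circ F(\tilde h'\circ f)=\epsilon_N\circ F(\tilde h)=h$ is exactly right. One caveat you share with the paper: $FT$ need not be basic (in Example 4.8, $F(2\oplus 2')\cong 2^{\oplus 2}$), so the map must be read as sending $T$ to the basic module with the same additive closure as $FT$; this affects neither your injectivity argument nor the paper's, since $\add FT_1=\add FT_2$ still yields $\add T_1=\add T_2$ after applying $H$.
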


\begin{proof}  We claim that the functor $F$ restricting to the set of isomorphism classes of basic $G$-stable $\Lambda$-modules is an injection.
If both $T_{1}$ and $T_{2}$ are $G$-stable $\Lambda$-modules and $FT_{1} \cong FT_{2}$, then $HFT_{1} \cong HFT_{2}$, that is, $\bigoplus_{\sigma\in G}{^{\sigma}T_{1}}
\cong \bigoplus_{\sigma\in G}{^{\sigma}T_{2}}$. Since $T_{1}$ and $T_{2}$ are $G$-stable, we have $T^n_{1} \cong T^n_{2}$ with $n=|G|$.
Thus $T_{1} \cong T_{2}$, and the claim follows.

(1) By definition, $T$ is $\tau$-rigid if and only if $\Hom_{\Lambda}(T, \tau T)=0$. By the proof of [RR, Lemma 4.2], we have that $F$ commutes with $\tau$. So we have
\begin{align*}
\Hom_{\Lambda G}(FT, \tau FT) &\cong \Hom_{\Lambda G}(FT, F\tau T) \cong \Hom_{\Lambda}(T, HF\tau T)\\ &\cong
 \Hom_{\Lambda}(T, \bigoplus_{\sigma\in G} {^{\sigma}(\tau T)} ) \cong \bigoplus_{\sigma\in G}\Hom_{\Lambda}(T, {^{\sigma}(\tau T))}\\
&\cong\bigoplus_{\sigma\in G}\Hom_{\Lambda}(^{\sigma^{-1}}T, \tau T) \cong\ \bigoplus_{\sigma\in G}\Hom_{\Lambda}(T, \tau T).
\end{align*}
Since $T$ is $G$-stable $\tau$-rigid in $\mod\Lambda$, we have $FT$ is $\tau$-rigid in $\mod\Lambda G$. Now the assertion follows from Lemma 4.1.

(2) Note that $(T,P)$ is a $G$-stable $\tau$-rigid pair if and only if $T$ is $G$-stable $\tau$-rigid, $P$ is $G$-stable projective and $\Hom_{\Lambda}(P,T)=0$. It follows from the injection in (1)
that $FT$ is $\tau$-rigid in $\mod\Lambda G$. By Lemma 2.8, $F$ preserves projective modules and $FP$ is a projective module in $\mod\Lambda G$. We have
\begin{align*}
\Hom_{\Lambda G}(FP,FT) &\cong \Hom_{\Lambda}(P,HFT) \cong \Hom_{\Lambda}(P,\bigoplus_{\sigma\in G} {^{\sigma}T} )\\
&\cong \bigoplus_{\sigma\in G}\Hom_{\Lambda}(P, {^{\sigma}T)} \cong \bigoplus_{\sigma\in G}\Hom_{\Lambda}(P, T).
\end{align*}
Thus $\Hom_{\Lambda G}(FP,FT)$=0, and therefore $(FT,FP)$ is a $\mathbb{X}$-stable $\tau$-rigid pair in $\mod\Lambda G$ by Lemma 4.1.

\vspace{0.2cm}

(3) By Propositions 2.1 and 2.2, $T\in s\tau$-$\tilt\Lambda$ if and only if $T$ is $\tau$-rigid and there exists an exact sequence
$$\Lambda\buildrel {f} \over\rightarrow T'\buildrel {g} \over\rightarrow T''\rightarrow 0$$
in $\mod \Lambda$ with $T', T''\in \add T$ and $f$ a left $\add T$-approximation of $\Lambda$. It follows from the injection in (1) that
$FT$ is a $\tau$-rigid $\Lambda G$-module and there exists an exact sequence
$$F\Lambda(\cong \Lambda G)\buildrel {Ff} \over\rightarrow FT'\buildrel {Fg} \over\rightarrow FT''\rightarrow 0$$
in $\mod\Lambda G$ with $FT',FT''\in \add FT$.
Then by Proposition 2.2, we only have to prove that $Ff$ is a left $\add FT$-approximation of $\Lambda G$, that is, $\Hom_{\Lambda G}(Ff,M)$
is surjective for any $M\in\add FT$. First we prove that $\Hom_{\Lambda G}(Ff,FT)$ is surjective. Consider the following commutative diagram:
$$\xymatrix{
& \Hom_{\Lambda G}(FT',FT)\ar[r]^{\Hom_{\Lambda G}(Ff,FT)}\ar[d]^{\cong} & \Hom_{\Lambda G}(\Lambda G,FT)\ar[d]^{\cong} &\\
& \Hom_{\Lambda}(T', HFT)\ar[r]^{\Hom_{\Lambda}(f,HFT)}\ar[d]^{\cong} & \Hom_{\Lambda}(\Lambda, HFT)\ar[d]^{\cong} &\\
& \Hom_{\Lambda}(T', \bigoplus_{\sigma\in G}{^{\sigma}T})\ar[r]^{\Hom_{\Lambda}(f,\bigoplus_{\sigma\in G}{^{\sigma}T})}
\ar[d]^{\cong} & \Hom_{\Lambda}(\Lambda, \bigoplus_{\sigma\in G}{^{\sigma}T})\ar[d]^{\cong} &\\
& \Hom_{\Lambda}(T', T^n)\ar[r]^{\Hom_{\Lambda}(f,T^n)} & \Hom_{\Lambda}(\Lambda, T^n), &
}$$
where $n=|G|$. The last row is surjective since $f$ is left $\add T$-approximation of $\Lambda$. So the first row is also surjective.

Now let $M\in\add FT$ and $g\in\Hom_{\Lambda G}(\Lambda G,M)$. Then there exist $m\geq 1$ and $N\in \mod \Lambda G$
such that $M\oplus N\cong (FT)^{m}$. So we have a split exact sequence
$$\xymatrix{
0\ar[r]& M\ar[r]^{i} & (FT)^{m}\ar[r]\ar@<1ex>[l]^{p} & N\ar[r] & 0&
}$$ in $\mod \Lambda G$ with $pi=1_{M}$. By the above argument,
there exists $h\in\Hom_{\Lambda G}(FT',(FT)^{m})$ such that $hFf=ig$. So we have
$$g=pig=phFf=\Hom_{\Lambda G}(Ff,M)(ph),$$ and hence $\Hom_{\Lambda G}(Ff,M)$
is surjective.
%Therefore, $FT$ is a $\mathbb{X}$-stable support $\tau$-tilting $\Lambda G$-module.
\end{proof}

As an application of Theorem 4.2, we get the following result which extends [DLS, Proposition 3.1.1].

\begin{corollary}
If $T$ is a $G$-stable (basic) tilting $\Lambda$-module, then $FT$ is a $\mathbb{X}$-stable tilting $\Lambda G$-module.
\end{corollary}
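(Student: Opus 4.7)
The plan is to combine Theorem 4.2(3) with Proposition 3.5(2), which identifies tilting modules with faithful support $\tau$-tilting modules, reducing the corollary to checking that the induction functor $F$ preserves faithfulness.

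First, since $T$ is tilting, Proposition 3.5(2) tells me that $T$ is a faithful support $\tau$-tilting $\Lambda$-module; together with the basic and $G$-stable hypotheses this places $T\in G$-$s\tau$-$\tilt\Lambda$. Theorem 4.2(3) then delivers $FT\in \mathbb{X}$-$s\tau$-$\tilt\Lambda G$, basic and $\mathbb{X}$-stable. Applying Proposition 3.5(2) now over $\Lambda G$, it remains only to prove that $FT$ is faithful as a $\Lambda G$-module.

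For the faithfulness I would compute $\opname{Ann}_{\Lambda G}(FT)$ by hand, using the $\Lambda$-module decomposition $FT=\bigoplus_{g\in G}(g\otimes_{\Lambda}T)$ from Lemma 2.8(2), where $g\otimes_{\Lambda}T\cong{^{g}T}$ as $\Lambda$-modules via $g\otimes s\mapsto s$. Given $x=\sum_{g}\lambda_{g}g\in\opname{Ann}_{\Lambda G}(FT)$, I would use the identity $\lambda g=g\cdot g^{-1}(\lambda)$ in $\Lambda G$ (immediate from the multiplication rule $(\lambda_{\sigma}\sigma)(\lambda_{\tau}\tau)=\lambda_{\sigma}\sigma(\lambda_{\tau})\sigma\tau$) to rewrite, for each $t\in T$,
$$x(1\otimes t)=\sum_{g\in G}\lambda_{g}g\otimes t=\sum_{g\in G}g\otimes g^{-1}(\lambda_{g})t.$$
Since distinct summands lie in distinct $\Lambda$-direct summands of $FT$, vanishing of this element forces $g\otimes g^{-1}(\lambda_{g})t=0$ and hence $g^{-1}(\lambda_{g})t=0$ for every $t\in T$ and every $g\in G$. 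Faithfulness of $T$ then gives $g^{-1}(\lambda_{g})\in\opname{Ann}_{\Lambda}T=0$, so $\lambda_{g}=0$ for all $g$, and $x=0$.

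The main obstacle is precisely this faithfulness step: Theorem 4.2 transports $\tau$-rigidity and the approximation data along $F$, but it does not automatically transport the annihilator condition, since the adjunction $(F,H)$ controls Hom- and Ext-groups rather than annihilators. A direct calculation inside $\Lambda G\otimes_{\Lambda}T$, exploiting the free $\Lambda$-basis $G$ of $\Lambda G$, seems unavoidable; once this is in hand, the rest of the argument is a formal consequence of results already established.
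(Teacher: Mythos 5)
Your proposal is correct, and the outer reduction is exactly the paper's: both of you invoke Proposition 3.5(2) to translate ``tilting'' into ``faithful support $\tau$-tilting,'' feed $T$ through Theorem 4.2(3) to get $FT\in\mathbb{X}$-$s\tau$-$\tilt\Lambda G$, and then observe that the only remaining point is faithfulness of $FT$. Where you diverge is in how that last point is settled. The paper uses the standard characterization that a module over a finite-dimensional algebra is faithful if and only if it cogenerates the algebra: faithfulness of $T$ gives an injection $0\rightarrow\Lambda\rightarrow T^{n}$, exactness of $F$ (Lemma 2.8(1)) together with $F\Lambda\cong\Lambda G$ yields an injection $0\rightarrow\Lambda G\rightarrow (FT)^{n}$, and hence $FT$ is faithful --- a two-line argument once the cogeneration criterion is granted. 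You instead compute $\opname{Ann}_{\Lambda G}(FT)$ directly from the free $\Lambda$-basis $G$ of $\Lambda G$, using the identity $\lambda g=g\cdot g^{-1}(\lambda)$ to push coefficients across the tensor sign; your calculation is correct (note that you only need $x(1\otimes t)=0$ for all $t$, which certainly holds for $x$ in the annihilator, so the logical direction is fine). Your route is more elementary and self-contained, at the cost of an explicit skew-group-algebra computation; the paper's route is shorter but leans on the cogeneration characterization of faithfulness and on exactness of $F$. Either argument is acceptable.
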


\begin{proof}
Let $T$ be a $G$-stable (basic) tilting $\Lambda$-module. Then by Proposition 3.5(2), $T$ is a $G$-stable faithful support $\tau$-tilting module.
So $\Lambda$ is cogenerated by $T$ and there exists an injection $0\rightarrow \Lambda \rightarrow T^n$ in $\mod \Lambda$. Since $F$ is exact,
we get an injection $0\rightarrow \Lambda G \rightarrow (FT)^n$ in $\mod \Lambda G$. So $FT$ is a $\mathbb{X}$-stable faithful support $\tau$-tilting
$\Lambda G$-module by Theorem 4.2(3), and hence it is a tilting $\Lambda G$-module by Proposition 3.5(2) again.
\end{proof}

The following observation is standard.

\begin{proposition}
Any $\Lambda G$-module is a $G$-stable $\Lambda$-module.
\end{proposition}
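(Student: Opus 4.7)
The plan is to produce an explicit $\Lambda$-module isomorphism $\phi_{\sigma} : M \iso {^{\sigma}\hspace{-2pt}M}$ for each $\sigma \in G$, using the invertibility of group elements inside the skew group algebra.

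Let $M$ be a $\Lambda G$-module, which we view as a $\Lambda$-module by restriction along the inclusion $\Lambda \hookrightarrow \Lambda G$. Fix $\sigma \in G$. The candidate map is
\[
\phi_{\sigma} : M \longrightarrow {^{\sigma}\hspace{-2pt}M}, \qquad m \longmapsto \sigma^{-1} m,
\]
where $\sigma^{-1} m$ denotes the action of the element $\sigma^{-1} \in \Lambda G$ on $m$. The key identity to check is that $\phi_{\sigma}$ is $\Lambda$-linear. Using the multiplication rule in $\Lambda G$, namely $\sigma^{-1}\lambda = \sigma^{-1}(\lambda)\,\sigma^{-1}$ for $\lambda \in \Lambda$, one computes
\[
\phi_{\sigma}(\lambda m) \;=\; \sigma^{-1}(\lambda m) \;=\; (\sigma^{-1}\lambda)m \;=\; \sigma^{-1}(\lambda)\bigl(\sigma^{-1} m\bigr) \;=\; \sigma^{-1}(\lambda)\,\phi_{\sigma}(m),
\]
and by definition of $^{\sigma}\hspace{-2pt}M$ the right-hand side is exactly $\lambda \cdot \phi_{\sigma}(m)$ in $^{\sigma}\hspace{-2pt}M$. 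Hence $\phi_{\sigma}$ is a $\Lambda$-homomorphism.

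To see that $\phi_{\sigma}$ is bijective, note that $\sigma \in \Lambda G$ is invertible with inverse $\sigma^{-1}$, so the map $m \mapsto \sigma m$ is a two-sided inverse of $\phi_{\sigma}$ on the level of sets. This shows $M \cong {^{\sigma}\hspace{-2pt}M}$ as $\Lambda$-modules, proving $M$ is $G$-stable. Since $\sigma \in G$ was arbitrary, we are done.

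There is no substantive obstacle here; the only subtlety is to remember that $^{\sigma}\hspace{-2pt}(-)$ twists the action by $\sigma^{-1}$ (rather than by $\sigma$), which is why one must multiply by $\sigma^{-1}$ and not by $\sigma$ in the definition of $\phi_{\sigma}$.
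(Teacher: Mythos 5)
Your proof is correct and is essentially the paper's own argument: the paper defines $f_{g}\colon {}^{g}Y\to Y$ by $y\mapsto gy$ and notes its inverse is $y\mapsto g^{-1}y$, which is exactly your map $\phi_{\sigma}$ written in the opposite direction. The $\Lambda$-linearity computation via $\sigma^{-1}\lambda=\sigma^{-1}(\lambda)\,\sigma^{-1}$ in $\Lambda G$ matches the paper's verification.
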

\begin{proof} Let $Y$ be a $\Lambda G$-module. For any $g\in G$ and $y\in Y$, we define a map
$$f_{g}:{^{g}Y}\rightarrow Y$$ by $f_{g}(y)=gy$.
Then for any $a\in\Lambda$, we have
$$f_{g}(ay)=g(ay)=g(g^{-1}(a)y)=ag(y)=af_{g}(y).$$ So $f_{g}$ is a $\Lambda$-module homomorphism. We also have that $f_{g}$ is an isomorphism with the inverse
$f_{g^{-1}}: Y\rightarrow$ $^{g}Y$ such that $f_{g^{-1}}(y)=g^{-1}y$ for any $y\in Y$.
\end{proof}

As an immediate consequence of Proposition 4.4, we have the following

\begin{corollary}
For any basic $G$-stable $\Lambda$-module $T$, we have
\begin{itemize}
\item[(1)] If $T$ is $\tau$-rigid in $\mod\Lambda$, then $HFT$ is $G$-stable $\tau$-rigid in $\mod\Lambda$.
\item[(2)] If $T$ is support $\tau$-tilting in $\mod\Lambda$, then  $HFT$ is $G$-stable support $\tau$-tilting in $\mod\Lambda$.
\end{itemize}
\end{corollary}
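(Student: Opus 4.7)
The plan is to reduce both statements to a concrete calculation of $HFT$ via Lemma 2.8(2). That lemma supplies the $\Lambda$-module isomorphism
\[ HFT \;\cong\; \bigoplus_{\sigma \in G} {}^{\sigma}\hspace{-2pt}T. \]
Since $T$ is $G$-stable by hypothesis, each summand ${}^{\sigma}\hspace{-2pt}T$ is isomorphic to $T$, and so $HFT \cong T^{n}$ where $n = |G|$. This single identification is the one nontrivial input; from here on everything is formal bookkeeping.

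First, to establish $G$-stability of $HFT$, I would just observe that ${}^{\sigma}(T^{n}) = ({}^{\sigma}\hspace{-2pt}T)^{n} \cong T^{n}$ for every $\sigma \in G$, which gives ${}^{\sigma}(HFT) \cong HFT$. For part (1), $\tau$-rigidity is preserved under finite direct sums and direct summands: from $\Hom_{\Lambda}(T, \tau T) = 0$ one gets $\Hom_{\Lambda}(T^{n}, \tau T^{n}) = 0$ immediately, so $HFT \cong T^n$ is $\tau$-rigid.

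For part (2), I would invoke Proposition 2.1 together with the convention that $|X|$ counts non-isomorphic indecomposable summands. Let $(T, P)$ be the basic support $\tau$-tilting pair, so that $T$ is $\tau$-rigid, $\Hom_{\Lambda}(P, T) = 0$, and $|T| + |P| = |\Lambda|$. Since $\add T^{n} = \add T$, we still have $|T^{n}| = |T|$ and hence $|T^{n}| + |P| = |\Lambda|$, and $\Hom_{\Lambda}(P, T^{n}) = 0$. Therefore $(T^{n}, P)$ is a support $\tau$-tilting pair, and combined with the $G$-stability already shown, $HFT$ is $G$-stable support $\tau$-tilting.

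The main obstacle is conceptual rather than technical: one must be comfortable with the fact that the support $\tau$-tilting property is an additive-closure condition together with a count of non-isomorphic indecomposable summands, so passing from the basic module $T$ to its $n$-fold power $T^{n}$ changes nothing of substance. Once that point is conceded, the corollary follows almost entirely from Lemma 2.8(2), and no further approximation sequences or adjunction arguments (such as those used in the proof of Theorem 4.2(3)) are required here.
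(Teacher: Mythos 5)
Your proof is correct and follows essentially the same route as the paper: both rest on the identification $HFT\cong\bigoplus_{\sigma\in G}{}^{\sigma}\hspace{-2pt}T\cong T^{n}$ from Lemma 2.8(2) together with the observation that $\tau$-rigidity and the support $\tau$-tilting property depend only on $\add T^{n}=\add T$. The only (immaterial) difference is that you deduce $G$-stability of $HFT$ directly from ${}^{\sigma}(T^{n})\cong T^{n}$, whereas the paper invokes Proposition 4.4 (restrictions of $\Lambda G$-modules are automatically $G$-stable).
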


\begin{proof}
Note that $HFT \cong \bigoplus_{\sigma\in G}$ $^{\sigma}T$ $\cong$ $T^n$ with $n=|G|$. So both assertions follow from Proposition 4.4.
\end{proof}

We have proved in Theorem 4.2(3) that $F$ induces an injection from $G$-$s\tau$-$\tilt\Lambda$ to $\mathbb{X}$-$s\tau$-$\tilt\Lambda G$. It is natural to ask the following question.

\begin{question*}
When is this injection a bijection?
\end{question*}

In the following, we give a partial answer to this question.

It follows from [RR, Corollary 5.2] that $(\Lambda G)\mathbb{X}$ is Morita equivalent to $\Lambda G^{(1)}$, where $G^{(1)}$ is the commutator subgroup of $G$. Let $G$ be solvable, and let
$$G\rhd G^{(1)}\rhd G^{(2)}\rhd G^{(3)}\rhd\cdots \rhd G^{(m)}=\{1\}$$ be its derived series, that is, every subgroup is
the commutator subgroup of the preceding one. Denote by $\mathbb{X}^{(i)}$ the character group of $G^{(i)}$. By [RR, Proposition 5.4],
we can get from $\Lambda G$ to $\Lambda$ by using a finite number of skew group algebra constructions, combined with Morita equivalences.
To be more precise, there exists a chain of skew group algebras
$$\Lambda\buildrel {G} \over\rightarrow \Lambda G\buildrel {\mathbb{X}} \over\rightarrow \Lambda G^{(1)}\buildrel {\mathbb{X}^{(1)}}
\over\rightarrow \Lambda G^{(2)}\rightarrow\cdots \buildrel {\mathbb{X}^{(m-1)}} \over\rightarrow\Lambda G^{(m)}\buildrel {Morita}
\over \simeq \Lambda,$$ where each algebra $\Lambda G^{(i)}$ is the skew group algebra of the preceding algebra $\Lambda G^{(i-1)}$ under the action of the
group $\mathbb{X}^{i-1}$. Then we have the induced functors $$\mod\Lambda\buildrel {F} \over\rightarrow \mod\Lambda G\buildrel {F^{(1)}} \over\rightarrow
\mod\Lambda G^{(1)}\rightarrow\cdots \buildrel {F^{(m)}} \over\rightarrow \mod\Lambda.$$
Under the above assumption, we give the following

\begin{theorem}
If $G$ a solvable group, then the functor $F: \mod\Lambda\rightarrow \mod\Lambda G$ induces a bijection:
\begin{center}
$G$-$s\tau$-$\tilt\Lambda\rightarrow \mathbb{X}$-$s\tau$-$\tilt\Lambda G$.
\end{center}
\end{theorem}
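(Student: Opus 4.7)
The plan is to iterate Theorem 4.2(3) along the chain of skew group algebras supplied by solvability of $G$, and to exploit the fact that this chain eventually returns, via the terminal Morita equivalence, to $\mod\Lambda$. Since injectivity of $F$ is already established in Theorem 4.2(3), only surjectivity remains to be proved.

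First, I would apply Theorem 4.2(3) to each pair $(\Lambda G^{(i-1)},\mathbb{X}^{(i-1)})$ appearing in the displayed chain before the statement, producing a sequence of injections
$$G\text{-}s\tau\text{-}\tilt\Lambda \hookrightarrow \mathbb{X}\text{-}s\tau\text{-}\tilt\Lambda G \hookrightarrow \cdots \hookrightarrow \mathbb{X}^{(m-1)}\text{-}s\tau\text{-}\tilt\Lambda G^{(m-1)} \hookrightarrow s\tau\text{-}\tilt\Lambda,$$
where the rightmost arrow lands in the plain set $s\tau\text{-}\tilt\Lambda$ because $G^{(m)}=\{1\}$ (so $\Lambda G^{(m)}=\Lambda$, and $\mathbb{X}^{(m)}$ is trivial, imposing no stability condition). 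Along the way, the stability at step $i$ is transported through the Morita equivalence $(\Lambda G^{(i-1)})\mathbb{X}^{(i-1)}\simeq\Lambda G^{(i)}$ from [RR, Corollary 5.2 and Proposition 5.4], so that the next application of Theorem 4.2(3) takes place on $\Lambda G^{(i)}$ with the correct $\mathbb{X}^{(i)}$-action.

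Next, by iterating the isomorphism $HF(-)\cong\bigoplus_{\sigma\in G}{}^{\sigma}(-)$ of Lemma 2.8 together with Proposition 4.4, I would show that the full composite $\Phi:=F^{(m)}\circ\cdots\circ F^{(1)}\circ F$, followed by the terminal Morita identification, sends a basic $G$-stable support $\tau$-tilting module $T$ to a module isomorphic to $T^{N}$ for an integer $N$ determined by the orders involved. As a map on basic modules, $\Phi$ is therefore the identity. Armed with this, the chain argument runs in reverse: given $M\in\mathbb{X}\text{-}s\tau\text{-}\tilt\Lambda G$, the images $F^{(1)}(M), F^{(2)}F^{(1)}(M),\ldots$ along the chain produce, via the terminal Morita equivalence, a basic module $T\in s\tau\text{-}\tilt\Lambda$; by the injectivity of $F^{(m)}\circ\cdots\circ F^{(1)}$ and the fact that $\Phi$ is the identity on basic modules, $T$ is forced to be $G$-stable and to satisfy $FT\cong M$, establishing surjectivity of $F$.

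The main obstacle I expect is not the composition computation but the precise identification of stability conditions across the successive Morita equivalences. At the $i$-th stage, the character group $(\mathbb{X}^{(i-1)})^{*}$ acts naturally on $(\Lambda G^{(i-1)})\mathbb{X}^{(i-1)}$, and one must match this action, under $(\Lambda G^{(i-1)})\mathbb{X}^{(i-1)}\simeq\Lambda G^{(i)}$, with the action of $\mathbb{X}^{(i)}$ on $\Lambda G^{(i)}$ in a manner compatible with the definition of stability used in Theorem 4.2(3). While the required identification is implicit in the structural results of [RR], verifying that stability is preserved at every step of the chain is the delicate bookkeeping on which the whole argument rests.
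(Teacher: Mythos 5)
Your overall strategy is exactly the paper's: iterate Theorem 4.2(3) along the derived series to obtain a chain of injections that returns to $\Lambda$, observe that the full composite is essentially the identity on basic modules, and deduce that every link is a bijection. The paper's proof is precisely this chain plus the one-line observation that a composite of injections which is a bijection forces each factor to be a bijection.

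There is, however, one step in your write-up that does not follow as stated. You let the return composite $h:=F^{(m)}\circ\cdots\circ F^{(1)}$ (followed by the terminal Morita identification) land in the \emph{plain} set $s\tau\text{-}\tilt\Lambda$, and then assert that for $M\in\mathbb{X}\text{-}s\tau\text{-}\tilt\Lambda G$ the module $T=h(M)$ ``is forced to be $G$-stable'' by the injectivity of $h$ together with $h\circ F=\mathrm{id}$. That inference is not valid: those two facts only tell you that \emph{if} $h(M)$ happens to be $G$-stable, then $M\cong F(h(M))$; they say nothing about $h(M)$ when $M$ is not already known to lie in the image of $F$. (Set-theoretically: injections $A\hookrightarrow B\hookrightarrow C$ whose composite is the inclusion $A\subseteq C$ do not force the second map to take values in $A$.) What is actually needed --- and what the paper builds into its chain by writing the final term as $G\text{-}s\tau\text{-}\tilt\Lambda G^{(m)}\cong G\text{-}s\tau\text{-}\tilt\Lambda$ rather than $s\tau\text{-}\tilt\Lambda$ --- is that every stage, including the last, carries a stability condition, so that $h$ is an injection \emph{into} $G\text{-}s\tau\text{-}\tilt\Lambda$; then the composite is an injective self-map of $G\text{-}s\tau\text{-}\tilt\Lambda$ which is a bijection, and one concludes that each factor is a bijection. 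The missing fact is supplied by the same mechanism you invoke elsewhere (Lemma 4.1 applied at each stage, or Proposition 4.4 together with $HF\cong\bigoplus_{\sigma}{}^{\sigma}(-)$, which shows the return journey produces $G$-stable $\Lambda$-modules), so the gap is repairable; but as written, the surjectivity argument is incomplete at exactly this point.
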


\begin{proof} By Theorem 4.2, the functor $F$ induces an injection:
\begin{center}
$G$-$s\tau$-$\tilt\Lambda\rightarrow \mathbb{X}$-$s\tau$-$\tilt\Lambda G$.
\end{center}
Applying Lemma 4.1 and Theorem 4.2, it is easy to see that the functors $F^{(i)}$ induce injections:
\begin{center}
$\mathbb{X}^{(i-1)}$-$s\tau$-$\tilt\Lambda G^{(i-1)}\rightarrow \mathbb{X}^{(i)}$-$s\tau$-$\tilt\Lambda G^{(i)}$.
\end{center}
Then we have the following chain of injections:
\begin{center}
$\quad\quad\quad\quad\quad G$-$s\tau$-$\tilt\Lambda\buildrel {F} \over\rightarrow \mathbb{X}$-$s\tau$-$\tilt\Lambda G\buildrel {F^{(1)}} \over\rightarrow \mathbb{X}^{(1)}$-$s\tau\text{-}\tilt\Lambda G^{(1)}$
\end{center}
\begin{center}
$\quad\quad\quad\quad\quad\quad\buildrel {F^{(2)}} \over\rightarrow\cdots \buildrel {F^{(m)}} \over\rightarrow G$-$s\tau$-$\tilt\Lambda G^{(m)} \cong G$-$s\tau\text{-}\tilt\Lambda$.
\end{center}
The composition $F^{(m)}\cdots F^{(1)}F$ is a bijection. So $F$ and all $F^{(i)}$ are bijections.
\end{proof}

Let $G$ be an abelian group. It is well known that $\mathbb{X}$ is isomorphic to $G$. So $\Lambda G$ admits an action by $G$; and moreover,
by [RR] the skew group algebra $(\Lambda G)G$ is Morita equivalent to $\Lambda$. Now the following is an immediate consequence of Theorem 4.6.

\begin{corollary}
If $G$ is an abelian group, then $F$ induces a bijection:
\begin{center}
$G$-$s\tau$-$\tilt\Lambda\rightarrow G$-$s\tau$-$\tilt\Lambda G$.
\end{center}
\end{corollary}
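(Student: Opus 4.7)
The plan is to deduce this corollary directly from Theorem 4.6 by identifying $\mathbb{X}$ with $G$ in the abelian case. Since every finite abelian group is solvable, Theorem 4.6 applies and yields a bijection $F: G$-$s\tau$-$\tilt\Lambda \to \mathbb{X}$-$s\tau$-$\tilt\Lambda G$. On the other hand, because $k$ is algebraically closed and $|G|$ is invertible in $\Lambda$ (so $k$ contains enough roots of unity), the structure theorem for finite abelian groups (equivalently, Pontryagin duality) furnishes a non-canonical isomorphism $\mathbb{X} \cong G$, which I fix once and for all.

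Next I would transport the $\mathbb{X}$-action on $\Lambda G$, given by $\chi(\lambda g) = \chi(g)\lambda g$, to a $G$-action on $\Lambda G$ via the chosen isomorphism. The key verification is that this transported action coincides, up to relabelling of elements of $G$, with the $G$-action on $\Lambda G$ referred to in the statement of the corollary, namely the one appearing in the iterated skew group construction $\Lambda \to \Lambda G \to (\Lambda G)G$, which is Morita equivalent to $\Lambda$ by [RR]. Granted this, a $\Lambda G$-module $M$ satisfies ${}^{\chi}M \cong M$ for all $\chi \in \mathbb{X}$ if and only if ${}^{g}M \cong M$ for all $g \in G$, so $\mathbb{X}$-$s\tau$-$\tilt\Lambda G$ coincides with $G$-$s\tau$-$\tilt\Lambda G$ as sets. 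The bijection from Theorem 4.6 then becomes the desired bijection $F: G$-$s\tau$-$\tilt\Lambda \to G$-$s\tau$-$\tilt\Lambda G$.

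The main obstacle is the identification of the two actions on $\Lambda G$. The abstract isomorphism $\mathbb{X} \cong G$ is elementary, but one must make a consistent choice and then verify that under it the autoequivalences ${}^{\chi}(-)$ and ${}^{g_{\chi}}(-)$ of $\mod\Lambda G$ agree. This reduces to a direct computation using the explicit formula for the $\mathbb{X}$-action on basis elements of $\Lambda G$ together with the standard description of the $G$-action coming from the iterated skew group construction; by multiplicativity of both actions in $G$, it suffices to carry the check out on a generating set, for instance on generators of the cyclic factors of $G$. Once this matching is established, no further $\tau$-tilting input is needed, and the corollary follows immediately from Theorem 4.6.
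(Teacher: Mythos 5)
Your proposal is correct and follows essentially the same route as the paper: both observe that an abelian group is solvable so Theorem 4.6 applies, and then identify $\mathbb{X}$ with $G$ (using that $k$ is algebraically closed and $|G|$ is invertible) to read the $\mathbb{X}$-stability condition on $\Lambda G$-modules as $G$-stability. The paper simply \emph{defines} the $G$-action on $\Lambda G$ by transporting the $\mathbb{X}$-action along a chosen isomorphism $\mathbb{X}\cong G$, so the compatibility check you flag as the main obstacle is not actually needed there, though it is the right thing to verify if one insists on a pre-existing $G$-action coming from the iterated skew group construction.
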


%\begin{corollary}
%If $G$ is a cyclic group, $F$ induces a bijection $G$-$s\tau$-$\tilt\Lambda\rightarrow G$-$s\tau$-$\tilt\Lambda G$.
%\end{corollary}

Finally, we illustrate Theorem 4.6 with the following example.

\begin{example}
Let $\Lambda$ be the path algebra of the quiver $Q$ (see below), and let $G=\mathbb{Z}/2\mathbb{Z}$ act on $\Lambda$ by switching $2$ and $2'$,
$\alpha$ and $\beta$ and fixing the vertex $1$.
Then the following $Q'$ is the quiver of $\Lambda G$.
$$\xymatrix{
&&&2 &&&&&1\ar[dr]^{\gamma} &&&\\
&Q=\hspace{-15pt}&1\ar[ur]^{\alpha}\ar[dr]_{\beta}&& &&&Q'=\hspace{-15pt}&&2&\\
&&&2'&&&&&1'\ar[ur]_{\delta}&&&
}$$
The Auslander-Reiten quivers of $\mod\Lambda$ and $\mod\Lambda G$ are the following, where each module is represented by its radical filtration.
$$\xymatrix{
&2\ar[dr]&& \scriptsize{\begin{matrix} 1\\2'\\ \end{matrix}}\ar[dr] &&&& \scriptsize{\begin{matrix} 1\\2\\ \end{matrix}}\ar[dr]&& 1'&\\
&& \scriptsize{\begin{matrix} 1\\2\ \ 2'\\ \end{matrix}}\ar[ur]\ar[dr]&&1&&2\ar[ur]\ar[dr] && \scriptsize{\begin{matrix} 1\ \ 1'\\2\\ \end{matrix}}\ar[ur]\ar[dr] &&\\
&2'\ar[ur]&& \scriptsize{\begin{matrix} 1\\2\\ \end{matrix}}\ar[ur] &&&& \scriptsize{\begin{matrix} 1'\\2\\ \end{matrix}}\ar[ur]&& 1&\\
}$$
$$\xymatrix{
\Gamma(\Lambda)&&&&&\Gamma(\Lambda G)
}$$
We denote by $\opname{ind} \Lambda$ the set of isomorphism classes of indecomposable $\Lambda$-modules and by $\opname{ind}\Lambda G$
the set of isomorphism classes of indecomposable $\Lambda G$-modules.

Then we describe the map induced by $F$ between $\opname{ind} \Lambda$ and $\opname{ind} \Lambda G$. Observe that the correspondences from the
the Auslander-Reiten quiver of $\Lambda$ to that of $\Lambda G$:

\vspace{0.5cm}

$\quad\quad\quad\quad\quad\quad\quad\quad\quad\quad\quad F$: $\opname{ind}\Lambda\rightarrow\opname{ind}\Lambda G$

$\quad\quad\quad\quad\quad\quad\quad\quad\quad\quad\quad\quad \ \  2,2' \ \longmapsto$ 2

$\quad\quad\quad\quad\quad\quad\quad\quad\quad\quad\quad\quad\quad$
${\scriptsize{\begin{matrix} 1\\2\ \ 2'\\ \end{matrix}}} \ \longmapsto \scriptsize{\begin{matrix} 1\\2\\ \end{matrix}}\oplus\scriptsize{\begin{matrix} 1\\2'\\ \end{matrix}}$

\vspace{0.2cm}

$\quad\quad\quad\quad\quad\quad\quad\quad\quad\quad\quad\quad$
${\scriptsize{\begin{matrix} 1\\2'\\ \end{matrix}}}$, ${\scriptsize{\begin{matrix} 1\\2\\ \end{matrix}}} \ \ \longmapsto \scriptsize{\begin{matrix} 1\ \ 1'\\2\\ \end{matrix}}$

\vspace{0.2cm}

$\quad\quad\quad\quad\quad\quad\quad\quad\quad\quad\quad\quad\quad\quad 1\ \longmapsto 1\oplus 1'$.

\vspace{0.5cm}

Recall from [AIR] the definition of the support $\tau$-tilting quiver $Q$($s\tau$-tilt$\Lambda$) of $\Lambda$ as follows.
\begin{itemize}
\item[(1)] The set of vertices is $s$$\tau$-tilt$\Lambda$.
\item[(2)] We draw an arrow from $T$ to $U$ if $U$ is a left mutation of $T$ ([AIR, Theorem 2.30]).
\end{itemize}
One can calculate the left mutation of support $\tau$-tilting $\Lambda$-modules by exchanging sequences that are constructed from left approximations.
Therefore we can draw the support $\tau$-tilting quiver of an algebra by its Auslander-Reiten quiver.
Now we draw $Q(s\tau$-$\tilt\Lambda)$ and $Q(s\tau$-$\tilt\Lambda G)$ as follows.

$Q(s\tau$-$\tilt\Lambda)$:
$$\xymatrix{
&&&2\ar[drr] &&&\\
&&\color{red}{2\oplus 2'}\ar[ur]\ar[r] & 2'\ar[rr] & &\color{blue}{0} &\\
&\color{green}{\scriptsize{\begin{matrix} 1\\2\ \ 2'\\ \end{matrix}}\oplus 2'\oplus 2}\ar[ur]\ar[r]\ar[dr] &
\scriptsize{\begin{matrix} 1\\2\ \ 2'\\ \end{matrix}}\oplus\scriptsize{\begin{matrix} 1\\2'\\
\end{matrix}}\oplus 2'\ar[r]\ar[dr] & \scriptsize{\begin{matrix} 1\\2'\\
\end{matrix}}\oplus 2'\ar[u]\ar[r] & 1\oplus\scriptsize{\begin{matrix} 1\\2'\\ \end{matrix}}\ar[dr] &&\\
&&\scriptsize{\begin{matrix} 1\\2\ \ 2'\\ \end{matrix}}\oplus2\oplus\scriptsize{\begin{matrix} 1\\2\\
\end{matrix}}\ar[r]\ar[dr] & \color{orange}{\scriptsize{\begin{matrix} 1\\2\ \ 2'\\
\end{matrix}}\oplus\scriptsize{\begin{matrix} 1\\2'\\ \end{matrix}}\oplus\scriptsize{\begin{matrix} 1\\2\\
\end{matrix}}}\ar[r] & \color{brown}{1\oplus\scriptsize{\begin{matrix} 1\\2'\\
\end{matrix}}\oplus\scriptsize{\begin{matrix} 1\\2\\ \end{matrix}}}\ar[u]\ar[d]& \color{purple}{1}\ar[uu] &\\
&&& 2\bigoplus\scriptsize{\begin{matrix} 1\\2\\
\end{matrix}}\ar[r]\ar@/^3pc/[uuuu] & 1\bigoplus\scriptsize{\begin{matrix} 1\\2\\ \end{matrix}}\ar[ur] &&\\
}$$

$Q(s\tau$-$\tilt\Lambda G)$:
$$\xymatrix{
&& \scriptsize{\begin{matrix} 1'\\2\\ \end{matrix}}\oplus2\ar[r]\ar[dr] & \color{red}{2}\ar[drrr] &&&&\\
& \color{green}{\scriptsize{\begin{matrix} 1\\2\\ \end{matrix}}\oplus\scriptsize{\begin{matrix} 1'\\2\\
\end{matrix}}\oplus2}\ar[ur]\ar[r]\ar[dr] & \scriptsize{\begin{matrix} 1\\2\\
\end{matrix}}\oplus2\ar[ur]\ar@/_1.5pc/[ddrr] &1'\oplus \scriptsize{\begin{matrix} 1'\\2\\ \end{matrix}}\ar[r] & 1'\ar[rr] && \color{blue}{0}&\\
&& \color{orange}{\scriptsize{\begin{matrix} 1\\2\\ \end{matrix}}\oplus\scriptsize{\begin{matrix} 1'\\2\\
\end{matrix}}\oplus\scriptsize{\begin{matrix} 1\ \ 1'\\2\\
\end{matrix}}}\ar[r]\ar[dr] & 1'\oplus\scriptsize{\begin{matrix} 1'\\2\\ \end{matrix}}\oplus\scriptsize{\begin{matrix} 1\ \ 1'\\2\\
\end{matrix}}\ar[u]\ar[r] & \color{brown}{1'\oplus1\oplus\scriptsize{\begin{matrix} 1\ \ 1'\\2\\ \end{matrix}}}\ar[r] & \color{purple}{1'}\oplus1\ar[d]\ar[ul]&&\\
&&&\scriptsize{\begin{matrix} 1\\2\\ \end{matrix}}\oplus1\oplus\scriptsize{\begin{matrix} 1\ \ 1'\\2\\
\end{matrix}}\ar[ur]\ar[r] & \scriptsize{\begin{matrix} 1\\2\\ \end{matrix}}\oplus1\ar[r] & 1\ar[uur] &&\\
}$$
The colored support $\tau$-tilting modules in the graph are all the basic $G$-stable support $\tau$-tilting modules in
$\mod\Lambda$ and $\mod\Lambda G$ respectively. Moreover, the bijection in Theorem 4.6 takes a $G$-stable support $\tau$-tilting module
in $Q(s\tau$-$\tilt\Lambda)$ to that in $Q(s\tau$-$\tilt\Lambda G$) in the same color.
The $G$-stable support $\tau$-tilting modules in green, orange and brown are $G$-stable tilting.
\end{example}

\vspace{0.5cm}

{\bf Acknowledgement}. This research was partially supported by NSFC (Grant No. 11571164) and a Project Funded by
the Priority Academic Program Development of Jiangsu Higher Education Institutions. The authors would like to thank
Dong Yang and Yuefei Zheng for their helpful discussions.

\end{document}